\newtheorem{theorem}{Theorem}[section]
\newtheorem{lemma}[theorem]{Lemma}
\newtheorem{corollary}[theorem]{Corollary}
\newtheorem{proposition}[theorem]{Proposition}
\theoremstyle{definition}
\newtheorem{example}[theorem]{Example}
\newtheorem{remark}[theorem]{Remark}
\newtheorem{problem}[theorem]{Problem}
\newcommand{\Z}{\mathbb{Z}}
\newcommand{\Q}{\mathbb{Q}}
\newcommand{\R}{\mathbb{R}}
\newcommand{\C}{\mathbb{C}}
\newcommand{\CP}{\mathbb{CP}}
\renewcommand{\k}{\Bbbk}
\newcommand{\one}{\mathbf{1}}
\newcommand{\zero}{\mathbf{0}}
\newcommand{\RR}{{\mathcal R}}
\newcommand{\VV}{\mathcal{V}}
\newcommand{\ZZ}{\mathcal{Z}}
\newcommand{\wV}{\mathcal{W}}
\newcommand{\PP}{\mathcal{P}}
\newcommand{\A}{{\mathcal{A}}}
\newcommand{\m}{{\mathfrak{m}}}
\DeclareMathOperator{\D}{d}
\DeclareMathOperator{\init}{in}
\DeclareMathOperator{\rank}{rank}
\DeclareMathOperator{\coker}{coker}
\DeclareMathOperator{\ab}{{ab}}
\DeclareMathOperator{\Sym}{Sym}
\DeclareMathOperator{\GL}{GL}
\DeclareMathOperator{\Hom}{{Hom}}
\DeclareMathOperator{\TC}{TC}
\DeclareMathOperator{\Char}{Char}
\DeclareMathOperator{\lk}{lk}
\DeclareMathOperator{\pf}{pf}
\DeclareMathOperator{\Pf}{Pf}
\DeclareMathOperator{\Det}{Det}
\DeclareMathOperator{\tor}{Tors}
\DeclareMathOperator{\Ann}{ann}
\DeclareMathOperator{\Spec}{Spec}
\DeclareMathOperator{\PD}{PD}
\newcommand{\myempty}{\text{\O}}
\newcommand{\dR}{\scriptscriptstyle{\rm dR}}
\newcommand{\PL}{\scriptscriptstyle{\rm PL}}
\DeclareMathOperator*{\connsum}{\scalerel*{\#}{\bigoplus}}
\newcommand{\cdga}{\textsc{cdga}}
\newcommand{\cga}{\textsc{cga}}
\newcommand{\surj}{\twoheadrightarrow}
\def\set#1{{\left\{#1\right\}}}
\def\dot{\mathchar"013A}  
\newcommand{\hdot}{{\raise1pt\hbox to0.35em{\Large $\dot$\!}}} 
\newcommand{\bwedge}{\mbox{\normalsize $\bigwedge$}}
\newcommand{\cz}{\check{{\mathcal Z}}}
\numberwithin{equation}{section}
\def\@tocline#1#2#3#4#5#6#7{\relax
  \ifnum #1>\c@tocdepth 
  \else
    \par \addpenalty\@secpenalty\addvspace{#2}%
    \begingroup \hyphenpenalty\@M
    \@ifempty{#4}{%
      \@tempdima\csname r@tocindent\number#1\endcsname\relax
    }{%
      \@tempdima#4\relax
    }%
    \parindent\z@ \leftskip#3\relax \advance\leftskip\@tempdima\relax
    \rightskip\@pnumwidth plus4em \parfillskip-\@pnumwidth
    #5\leavevmode\hskip-\@tempdima
      \ifcase #1
       \or\or \hskip 1em \or \hskip 2em \else \hskip 3em \fi%
      #6\nobreak\relax
    \dotfill\hbox to\@pnumwidth{\@tocpagenum{#7}}\par
    \nobreak
    \endgroup
  \fi}
\title[Cohomology jump loci of $3$-manifolds]%
{Cohomology jump loci of $3$-manifolds}
\author[Alexander~I.~Suciu]{Alexander~I.~Suciu$^1$}
\address{Department of Mathematics,
Northeastern University,
Boston, MA 02115, USA}
\email{\href{mailto:a.suciu@northeastern.edu}{a.suciu@northeastern.edu}}
\urladdr{\href{http://web.northeastern.edu/suciu/}%
{web.northeastern.edu/suciu/}}
\thanks{$^1$This work was supported by Simons Foundation Collaboration 
Grants for Mathematicians \#354156 and \#693825}
\subjclass[2010]{Primary 
55N25, 
57M27.  
Secondary 
16E45,  
55P62,  
57M05,  
57M25, 
57N10.  
}
\keywords{Characteristic variety, resonance variety, tangent cone, algebraic model, formality, 
Alexander polynomial, closed $3$-manifold, connected sum, graph manifold, link complement}
\begin{document}

\begin{abstract}
The cohomology jump loci of a space $X$ are of two 
basic types: the characteristic varieties, 
defined in terms of homology with coefficients in rank 
one local systems, and the resonance varieties, 
constructed from information encoded in either the cohomology ring, 
or an algebraic model for $X$. We explore here the geometry of 
these varieties and the delicate interplay between them 
in the context of closed, orientable $3$-dimensional 
manifolds and link complements.  The classical multivariable 
Alexander polynomial plays an important role in this analysis. 
As an application, we derive some consequences regarding 
the formality and the existence of finite-dimensional models 
for such $3$-manifolds.
\end{abstract}

\maketitle
\tableofcontents

\section{Introduction}
\label{sect:intro}

\subsection{Cohomology jump loci}
\label{intro:cjl}

Let $X$ be a finite, connected CW-complex 
and let $\pi=\pi_1(X)$ be its fundamental group. 
The {\em characteristic varieties}\/ $\VV^i_k(X)$ are the 
Zariski closed subsets of the algebraic group $\Hom(\pi,\C^{*})$ 
consisting of those characters $\rho\colon \pi\to \C^{*}$  
for which the $i$-th homology group of $X$ 
with coefficients in the rank~$1$ local system defined by $\rho$ 
has dimension  at least $k$; in particular, the trivial character 
$\one$ belongs to $\VV^i_k(X)$ precisely when the $i$-th Betti 
number $b_i(X)$ is at least $k$.

Now let $H^{\hdot}=H^{\hdot}(X,\C)$ be the cohomology algebra of $X$. 
For each $a\in H^1$, we may form a cochain complex, $(H,a),$ with 
differentials $\delta_a\colon H^i\to H^{i+1}$ given by 
left-multiplication by $a$. 
The {\em resonance varieties}\/ $\RR^i_k(X)$, then, 
are the subvarieties of the affine space 
$H^1$ consisting of those classes $a$ for 
which the $i$-th cohomology of $(H,a)$ has dimension 
at least $k$.

Finally, suppose we are given an algebraic model for $X$, that is, a 
commutative differential graded algebra $(A,\D)$ connected by a 
zig-zag of quasi-isomorphisms to the Sullivan algebra of polynomial 
forms on $X$. Assuming $A$ is connected and of finite type,  
we may form a cochain complex $(A,\delta_a)$ as above, with 
differentials now given by $\delta_a(u)=au+\D{u}$, and we may 
define the resonance varieties $\RR^i_k(A)\subseteq H^1(A)$ 
analogously.   (When if $X$ is formal, that is, the cohomology 
algebra $H^{\hdot}(X,\C)$ with $\D=0$ is a model for $X$, we 
recover the previous definition of resonance.) 

All these notions admit `partial' versions: e.g., for a fixed $q\ge 1$, 
we may speak of a $q$-finite $q$-model $(A,\D)$ for $X$, 
in which case the sets $\RR^i_k(A)$ are Zariski closed 
for all $i\le q$. For more details on all this, we refer to 
\cite{DS18, DP-ccm, DPS-duke, MPPS, Su-tcone} 
and references therein.

For $q=1$, the aforementioned properties of the space $X$ can  
be interpreted purely in terms of the Malcev Lie algebra of it fundamental 
group, $\m(\pi)$.  For instance, as shown in \cite{PS-jlms}, $X$ admits a 
$1$-finite $1$-model if and only if $\m(\pi)$ is the lower central series 
completion of a finitely presented Lie algebra $L$.  More stringently, 
as shown in the foundational work of Quillen \cite{Qu} and 
Sullivan \cite{Su75}, $X$ is $1$-formal if and only if $L$ can 
be chosen to be a quadratic Lie algebra.  

\subsection{The Tangent Cone formula}
\label{intro:tc}

A crucial tool in both the theory and the applications of cohomology jump 
loci is a formula relating the behavior around the origin of the characteristic 
and resonance varieties of a space. 

Given a subvariety $W\subseteq (\C^*)^n$, we consider two types 
of approximations around the trivial character.  One is the usual 
tangent cone, $\TC_{\one}(W)\subseteq \C^n$, while the other is the 
exponential tangent cone, $\tau_{\one}(W)$, which consists of those 
$z\in \C^n$ for which $\exp(\lambda z)\in W$, for all $\lambda\in \C$. 
As shown by Dimca--Papadima--Suciu in \cite{DPS-duke}, $\tau_{\one}(W)$ 
is a finite union of rationally defined linear subspaces, all contained in 
$\TC_{\one}(W)$.  

Now let $X$ be a space as above. Combining the previous 
observation together with a result of Libgober \cite{Li02} 
yields a chain of inclusions, 
\begin{equation}
\label{eq:tc inc-intro}
\tau_{\one}(\VV^i_k(X))\subseteq  
\TC_{\one}(\VV^i_k(X))\subseteq \RR^i_k(X),
\end{equation}
in arbitrary degree $i$ and depth $k$.  As we shall see, each of these 
inclusions may be strict. Nevertheless, if $X$ admits a $q$-finite 
$q$-model $A$,  it follows from work of Dimca--Papadima \cite{DP-ccm} 
and Budur--Wang \cite{BW17} that the following ``Tangent Cone formula" holds:
\begin{equation}
\label{eq:tc-fm-intro}
\tau_{\one}(\VV^i_k(X))=\TC_{\one}(\VV^i_k(X))=\RR^i_k(A) 
\end{equation}
for all $i\le q$ and $k\ge 0$.
In particular, if $X$ is $q$-formal, then, in the same range,
\begin{equation}
\label{eq:tcone-intro}
\tau_{\one}(\VV^i_k(X))=\TC_{\one}(\VV^i_k(X))=\RR^i_k(X),
\end{equation}
a result originally proved in \cite{DPS-duke} for $i=1$.

\subsection{Cohomology jump loci of closed $3$-manifolds}
\label{intro:res3m}

Most of the applications of these results have centered on 
the case when $X$ admits a finite-dimensional model, 
which happens for instance if $X$ is a smooth, quasi-projective 
variety (in particular, the complement of a hyperplane arrangement),  
or a compact K\"ahler manifold, or a Sasakian manifold, 
or a nilmanifold, or a classifying space for a right-angled 
Artin group.  

We focus here instead on the cohomology jump loci of 
$3$-dimen\-sional manifolds, which in general fail to possess  
finite-dimensional models.  Let $M$ be a compact, connected 
$3$-manifold; we shall assume for simplicity that $M$ is 
orientable and $\partial M = \myempty$, although we shall 
also treat in \S\ref{sect:links} the case when $M$ is a link complement. 
Set $n=b_1(M)$.   Sending each element of $\pi=\pi_1(M)$ to its inverse 
induces an automorphism of the character group of $\pi$, 
which in turn restricts to isomorphisms  
$\VV^{i}_k(M)\cong \VV^{3-i}_k(M)$.  Thus, in order 
to compute the characteristic varieties of $M$, it is enough 
to determine the jump loci $\VV^1_k(M)$ for $1\le k\le n$.  

Work of McMullen \cite{McM} and Turaev \cite{Tu} implies that, 
at least away from the origin $\one$, 
the intersection of $\VV^1_1(M)$ with the identity component of the 
character group coincides with $V(\Delta_M)$, the hypersurface 
defined by the Alexander polynomial $\Delta_M\in \Z[t_1^{\pm 1},\dots , t_n^{\pm 1}]$. 
It follows that $\TC_{\one}(\VV^1_1(M))$ is either $\{\zero\}$, or $\C^n$, or 
the subvariety of $\C^n$ defined defined by the initial form of 
the polynomial $\left.\Delta_M\right|_{t_i-1=x_i} \in \Z[x_1,\dots, x_n]$.

Now fix an orientation $[M]\in H_3(M,\Z)$; 
then the cup product on $M$ determines an alternating 
$3$-form $\mu_M$ on $H^1(M,\Z)$, given by 
$a\wedge b\wedge c\mapsto \langle a\cup b\cup c , [M]\rangle$. 
Let $\Pf(\mu_M)\in \Z[x_1,\dots, x_n]$ be the Pfaffian 
of $\mu_M$, as defined in \cite{Tu}.  As shown in \cite{Su-poinres},  
except for the trivial cases when $n\le 1$, the first resonance variety 
of $M$ is given by 
\begin{equation}
\label{eq:r13m-intro}
\RR^1_1(M)=
\begin{cases}
H^1(M,\C) & \text{if\/ $n$ is even},\\[2pt]
V(\Pf(\mu_M))  & \text{if\/ $n=2g+1\ge 3$ and $\mu_M$ is generic}.
\end{cases}
\end{equation}
Here, we say that $\mu_M$ is generic (in the sense of \cite{BP}) 
if there is an element 
$c\in H^1(M,\C)$ such that the $2$-form on $H^1(M,\C)$
given by $a \wedge b\mapsto \mu_A(a\wedge b\wedge c)$
has rank $2g$.

The higher depth resonance varieties also exhibit a nice pattern, 
revealed in \cite{Su-poinres}: 
$\RR^1_{2k}(M) =\RR^1_{2k+1}(M)$ if $n$ is even, and 
$\RR^1_{2k-1}(M) =\RR^1_{2k}(M)$  if $n$ is odd; 
moreover, if $n\ge 3$ and $\mu_M$ has maximal rank, 
then $\RR^1_{n-2}(M)=\RR^1_{n-1}(M)=\RR^1_{n}(M)=\{\zero\}$.
 
\subsection{A Tangent Cone theorem for closed $3$-manifolds}
\label{intro:tc3m}

As is well-known, $3$-manifolds may be non-formal, 
due to the presence of non-vanishing Massey products in their cohomology.  
Thus, we do not expect the Tangent Cone formula \eqref{eq:tcone-intro} 
to hold in this context. 

Nevertheless, something very special happens in degree $1$ and depth $1$.  
The next result  (proved in Theorem \ref{thm:tc 3d}), delineates exactly the 
class of  closed, orientable $3$-manifolds $M$ for which the second half of 
the Tangent Cone formula holds, except in the case when $n=b_1(M)$ is 
odd and at least $3$ and $\mu_M$ is {\em not}\/ generic, which remains open.

\begin{theorem}
\label{thm:tc3d-intro}
With notation as above,
\begin{enumerate}
\item \label{i-odd} 
If $n\le 1$, or $n$ is odd, $n\ge 3$, and $\mu_M$ 
is generic, then $\TC_{\one}(\VV^1_1(M))=\RR^1_1(M)$.
\item \label{i-even} 
If $n$ is even and $n\ge 2$, then $\TC_{\one}(\VV^1_1(M))= \RR^1_1(M)$ if 
and only if $\Delta_M= 0$.
\end{enumerate}
\end{theorem}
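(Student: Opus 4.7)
The plan is to reduce everything to a comparison of two concrete pieces of data attached to $M$: the initial form (at $t_i=1$) of the Alexander polynomial $\Delta_M$, and the Pfaffian $\Pf(\mu_M)$ of the triple cup-product form. First, by the McMullen--Turaev identification recalled in \S\ref{intro:res3m}, the intersection of $\VV^1_1(M)$ with the identity component of $\Hom(\pi_1(M),\C^{*})$ coincides with $V(\Delta_M)$ off the trivial character. Setting $\widetilde{\Delta}_M(x):=\Delta_M(1+x_1,\ldots,1+x_n)\in\Z[x_1,\ldots,x_n]$, this yields a clean dichotomy: if $\Delta_M=0$, then $\TC_{\one}(\VV^1_1(M))=\C^n$; if $\Delta_M\ne 0$, then $\TC_{\one}(\VV^1_1(M))=V(\init(\widetilde{\Delta}_M))$ (with the convention that this locus is $\{0\}$ when the initial form is a nonzero constant). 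This will be the single bridge between the two sides of the theorem throughout.

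For part (\ref{i-even}), where $n$ is even, \eqref{eq:r13m-intro} gives $\RR^1_1(M)=H^1(M,\C)=\C^n$, and the dichotomy proves both directions at once: when $\Delta_M=0$ we get $\TC_{\one}(\VV^1_1(M))=\C^n=\RR^1_1(M)$, and when $\Delta_M\ne 0$ the tangent cone is confined to the proper hypersurface $V(\init(\widetilde{\Delta}_M))\subsetneq\C^n$, so equality fails. For the $n\le 1$ case of part (\ref{i-odd}), both sides are $\{0\}$: this is trivial for $n=0$, and for $n=1$ the identity $a\cup a=0$ for $a\in H^1(M,\C)$ collapses the complex $(H^{\hdot}(M,\C),\delta_a)$ enough to force $\RR^1_1(M)=\{0\}$, while $\Delta_M\ne 0$ gives $\TC_{\one}(\VV^1_1(M))=\{0\}$ as well.

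The substantive case is $n=2g+1\ge 3$ with $\mu_M$ generic. Here \eqref{eq:r13m-intro} gives $\RR^1_1(M)=V(\Pf(\mu_M))$, and the genericity hypothesis ensures $\Pf(\mu_M)\ne 0$, so the resonance variety is a proper hypersurface in $\C^n$. The general inclusion \eqref{eq:tc inc-intro} then forces $\TC_{\one}(\VV^1_1(M))\subsetneq\C^n$, hence $\Delta_M\ne 0$, and the identity $\TC_{\one}(\VV^1_1(M))=\RR^1_1(M)$ reduces to the polynomial comparison
\[
V(\init(\widetilde{\Delta}_M))=V(\Pf(\mu_M)),
\]
or, more sharply, to $\init(\widetilde{\Delta}_M)\doteq \Pf(\mu_M)$ up to a unit.

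The main obstacle, and the heart of the argument, is exactly this last comparison. I propose to deduce it from Turaev's classical formula \cite{Tu} expressing the leading behavior at the origin of the Alexander polynomial of a closed orientable $3$-manifold in terms of the Pfaffian of its triple cup-product form; genericity of $\mu_M$ is precisely what guarantees both that $\Pf(\mu_M)$ is nonzero and that it recovers the true initial form of $\widetilde{\Delta}_M$. Combining this identification with the dichotomy above completes part (\ref{i-odd}). The reason the non-generic odd case is left open is that without genericity one loses simultaneous control of both sides: $\Pf(\mu_M)$ may vanish or fail to have the expected degree, and the initial form of $\widetilde{\Delta}_M$ may shift into a different piece of the filtration, so Turaev's Pfaffian identity no longer pins down the tangent cone.
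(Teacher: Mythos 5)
Your proposal follows essentially the same route as the paper's proof of Theorem \ref{thm:tc 3d}: identify $\TC_{\one}(\VV^1_1(M))$ with $V(\init(\widetilde{\Delta}_M))$ via the McMullen--Turaev description of $\VV^1_1(M)$ near $\one$, dispose of the even case using $\RR^1_1(M)=H^1(M,\C)$, and in the odd generic case match the tangent cone against $\RR^1_1(M)=V(\Pf(\mu_M))$ by means of Turaev's formula for the initial form of the Alexander polynomial. Two small corrections are in order. First, the precise input from Turaev is $\init(\widetilde{\Delta}_M)=\Det(\mu_M)=\Pf(\mu_M)^2$ (Lemma \ref{lem:turaev} together with \cite[Theorem III.2.2]{Tu}), not $\init(\widetilde{\Delta}_M)\doteq\Pf(\mu_M)$; the zero loci coincide, so your reduction still closes, but the identity as you state it is not what Turaev proves. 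Second, for $n=1$ you assert $\Delta_M\ne 0$ without justification; this claim is true but not free, and it is also unnecessary: since $\RR^1_1(M)\subseteq\{\zero\}$ whenever $n\le 1$, Libgober's inclusion $\TC_{\one}(\VV^1_1(M))\subseteq\RR^1_1(M)$ already forces equality (both sets contain $\zero$ precisely when $b_1(M)\ge 1$), which is how the paper handles this case.
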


This result, together with those mentioned in \S\ref{intro:cjl}--\ref{intro:tc}, 
have  definite implications regarding the  kind of algebraic models 
a closed, orientable $3$-manifolds $M$ has, or, the kind of presentations 
the Malcev Lie algebra of $\pi=\pi_1(M)$ admits.
For instance, if $n$ is even, $n\ge 2$, and $\Delta_M\ne 0$, 
then $M$ is not $1$-formal and so $\m(\pi)$ admits no quadratic 
presentation. In Example \ref{ex:finite}, we exhibit a $3$-manifold $M$ 
with $b_1(M)=2$ for which the first half of the Tangent Cone formula fails, 
thus showing that $M$ actually has no $1$-finite $1$-model, or, 
equivalently, $\m(\pi)$ admits no finite presentation.

\subsection{Connected sums and graph manifolds}
\label{intro:ops}

As is well-known, every closed, orientable $3$-manifold decomposes 
as the connected sum of finitely many irreducible $3$-manifolds. 
We give in Theorem \ref{thm:jump conn} an explicit formula that 
expresses the cohomology jump loci of the connected sum 
of two closed, orientable, smooth $m$-manifolds ($m\ge 3$) 
in terms of the jump loci of the summands.  
Since every $3$-manifold is smooth, this reduces the 
computation of the cohomology jump loci of arbitrary 
closed, orientable $3$-manifolds to that of irreducible ones.
In particular, if $M=M_1\,\#\, M_2$, and both summands have 
non-zero first Betti number, then $\VV^1_1(M)=H^1(M,\C^*)$ 
and $\RR^1_1(M)=H^1(M,\C)$, and so the full Tangent Cone 
formula holds for $M$.

Every irreducible closed, orientable $3$-manifold $M$ 
admits a Jaco--Shalen--Johannson (JSJ) decomposition 
along incompressible tori; $M$ is a graph-manifold is each 
of the pieces is Seifert fibered. We discuss in \S\ref{sect:graphman} 
three classes of graph-manifolds where the cohomology jump loci 
can be described in a fairly detailed fashion:  
(1) closed, orientable Seifert manifolds with orientable base;  
(2) graph-manifolds whose closed-up Seifert pieces are 
of type (1) and whose underlying graph 
is a tree; and (3) boundary manifolds of complex 
projective line arrangements.

\subsection{Links in the $3$-sphere}
\label{intro:links}

In the final section we explore the extent to which 
the Tangent Cone formula applies to link complements.  Given 
a link $L=\{L_1,\dots, L_n\}$ in $S^3$, we let $X$ denote its 
complement. Then $\VV^1_1(X) = V(\Delta_L)\cup \{\one\}$, 
where $\Delta_L\in \Z[t_1^{\pm 1},\dots , t_n^{\pm 1}]$ is the 
(multivariable) Alexander polynomial of the link. Moreover, 
$\RR^1_1(X)$ is the vanishing locus of the codimension $1$ 
minors of the linearized Alexander matrix, whose entries 
are certain linear forms in the variables $x_1,\dots,x_n$, 
with coefficients solely depending on the linking numbers 
$\ell_{i,j}=\lk(L_i,L_j)$. 

For $2$-component links, we obtain a complete answer regarding 
the validity of the full Tangent Cone formula (in depth $1$), and 
the formality of the link complement.  In Theorem \ref{thm:2link} 
we show the following: the complement $X$ is formal if and only if 
$\tau_{\one}(\VV^1_1(X))=\TC_{\one}(\VV^1_1(X))=\RR^1_1(X)$, 
and this happens precisely when the linking number of the two 
components is non-zero.   We conclude with several examples 
of links with $3$ or more  components for which the second 
equality holds yet the first one does not, thereby showing that 
such link complements admit no $1$-finite $1$-models.

\section{Resonance varieties}
\label{sect:res}

\subsection{Commutative differential graded algebras}
\label{subsec:cdga}
Let $\k$ be a field of characteristic $0$, and 
let $A=(A^{\hdot},\D)$ be a commutative, differential 
graded algebra (for short, a $\cdga$) over $\k$.  
That is, $A$ is a non-negatively graded $\k$-vector 
space, endowed with a multiplication map 
$\cdot\colon A^i \otimes_\k A^j \to A^{i+j}$ satisfying 
$a\cdot b = (-1)^{ij} b \cdot a$,  
and a differential $\D\colon A^i\to A^{i+1}$ 
satisfying $\D(a\cdot b) = \D(a)\cdot b 
+(-1)^{i} a \cdot \D(b)$, for all $a\in A^i$ and $b\in A^j$. 
The cohomology of the underlying cochain complex, 
$H^{\hdot}(A)$, inherits the structure of a commutative, 
graded algebra ($\cga$); 
we will let $b_i(A)=\dim_\k H^i(A)$ be its Betti numbers.

A morphism between two $\cdga$s, $\varphi\colon A\to B$, is both 
an algebra map and a cochain map. Consequently, $\varphi$ induces a 
morphism $\varphi^*\colon H^{\hdot} (A)\to H^{\hdot} (B)$ 
between the respective cohomology algebras.  
We say that $\varphi$ is a quasi-isomorphism if $\varphi^*$ is an 
isomorphism. Likewise, we say $\varphi$ is a $q$-isomorphism (for some 
$q\ge 1$) if $\varphi^*$ is an isomorphism in degrees up to $q$ 
and a monomorphism in degree $q+1$.  

Two $\cdga$s $A$ and $B$ are {\em weakly equivalent} 
(or just {\em $q$-equivalent}) if there is a finite zig-zag of 
quasi-isomorphisms (or $q$-isomorphisms) connecting $A$ to $B$, 
\begin{equation}
\label{eq:zig-zag}
\xymatrixcolsep{20pt}
\xymatrix{
A\ar[r] & A_1 &A_2\ar[l] \ar[r]& \cdots & A_n\ar[r]\ar[l]  & B,
}
\end{equation}
with arrows going either way. 
In this case, we write $A\simeq B$ (or $A\simeq_q B$).  
A $\cdga$ $(A,\D)$ is said to be {\em formal}\/ (or just {\em $q$-formal}) 
if it is weakly equivalent (or just $q$-equivalent) to its cohomology 
algebra, $H^{\hdot}(A)$, endowed with the zero differential. 

\subsection{Resonance varieties}
\label{subsec:res}

Assume now that our $\cdga$ $A$ is connected, i.e., $A^0=\k$, 
generated by the unit $1$. Since $\D(1)=0$, we may 
identify the vector space $H^1(A)$ 
with $Z^1(A)=\ker (\D)$. For each element 
$a$ of this space, we turn $A$ into a cochain complex, 
\begin{equation}
\label{eq:aomoto}
\xymatrix{(A^{\hdot} , \delta_{a})\colon  \ 
A^0  \ar^(.65){\delta^0_{a}}[r] & A^1
\ar^(.5){\delta^1_{a}}[r] 
& A^2   \ar^(.5){\delta^2_{a}}[r]& \cdots },
\end{equation}
with differentials given by $\delta^i_{a} (u)= a \cdot u + \D(u)$, 
for all $u \in A^i$.  (The fact that $\delta^{i+1}_{a}\circ \delta^i_a=0$ 
for all $i\ge 0$  easily follows from the definitions.) Computing the 
homology of these chain complexes for various values of the 
parameter $a$, and keeping track of the dimensions of the resulting 
$\k$-vector spaces yields the sets
\begin{equation}
\label{eq:rra}
\RR^i_k(A)= \{a \in H^1(A)   
\mid  \dim_\k H^i(A^{\hdot}, \delta_{a}) \ge k\}.
\end{equation}

Suppose now that $A$ is {\em $q$-finite}, for some $q\ge 1$, 
that is, the Betti numbers $b_i=b_i(A)$ are finite for all $i\le q$.  
Clearly, $H^1(A)$ is also a finite-dimensional $\k$-vector space.  
Moreover, as we shall see in \S\ref{subsec:eqresvar}, the sets 
$\RR^i_k(A)$ are algebraic subsets 
of the ambient affine space $H^1(A)$, for all $i\le q$.  We call these  
sets the {\em resonance varieties}\/ of $A$, in degree 
$i\ge 0$ and depth $k\ge 0$.  
For each $0\le i \le q$, we obtain a descending filtration, 
\begin{equation}
\label{eq:res filt}
H^1(A)=\RR^i_0(A)\supseteq \RR^i_1(A)\supseteq \cdots \supseteq 
\RR^i_{b_i+1}(A)= \myempty .
\end{equation}
Clearly, $H^i(A^{\hdot}, \delta_{0})=H^i(A)$; thus, the point 
$\zero\in H^1(A)$ belongs to $\RR^i_k(A)$ 
if and only if $b_i\ge k$.  In particular, since $A$ is connected, 
we have that $\RR^0_1(A)=\{\zero\}$. 

In general, the resonance varieties of a $\cdga$ may not be invariant 
under scalar multiplication, see \cite{DP-ccm, MPPS, Su-tcone}. 
Nevertheless, when the differential of $A$ is zero (that is, 
$A$ is simply a $\cga$), the varieties $\RR^i_k(A)$ are 
homogeneous subsets of $H^1(A)=A^1$.  When $i=1$, 
these subsets admit a particularly simple 
description.  First note that the differential  
$\delta^0_a$ takes the generator $1\in A^0=\k$ to $a\in  A^1$. 
Thus, a non-zero element $a\in A^1$ belongs 
to $\RR^1_k(A)$ if and only if there exist elements 
$u_1,\dots ,u_k\in A^1$ such that the set $\{a,u_1,\dots ,u_k\}$ 
is linearly independent and $au_1=\cdots =au_k=0$ in $A^2$. 
In particular, if $b_1=0$ then $\RR^1_1(A)=\myempty$, and if 
$b_1=1$ then $\RR^1_1(A)=\{\zero\}$. 

\subsection{Fitting ideals }
\label{subsec:fitt}
Our next goal is to explain why the resonance varieties 
of a locally finite $\cdga$ are Zariski closed sets, and how to 
find defining equations for  these varieties. 
We start with some basic notions from commutative algebra, 
following Eisenbud \cite{Ei}. Let $S$ be a commutative ring with unit.   
If $\varphi$ is a matrix with entries in $S$, 
we let $I_k(\varphi)$ be the ideal of $S$ generated by all
minors of size $k$ of $\varphi$. We then have a descending 
chain of ideals, $S=I_0(\varphi)\supseteq I_1(\varphi)\supseteq \cdots$. 

Now suppose $S$ is Noetherian.  Then every finitely 
generated $S$-module $Q$ admits a finite presentation, say  
$S^p \xrightarrow{\,\varphi\,} S^q \to Q \to 0$.  We can 
arrange that $p\ge q$, by adding zero columns to the 
matrix $\varphi$ if necessary.   We then define 
the {\em $k$-th elementary ideal}\/ (or, {\em Fitting ideal}) 
of $Q$ as $E_k(Q)=I_{q-k}(\varphi)$.
As is well-known, this ideal depends only on the module $Q$, and 
not on the choice of presentation matrix $\varphi$, whence the notation.

The Fitting ideals  form an ascending chain,
 $E_0(Q)\subseteq E_1(Q)\subseteq \cdots \subseteq S$. 
Furthermore, $E_0(Q)\subseteq \Ann(Q)$ 
and $(\Ann (Q))^q \subseteq E_0(Q)$, 
while $\Ann (Q) \cdot E_k(Q)\subseteq E_{k-1}(Q)$, for 
all $k>0$.   Consequently, if we denote by $V(\mathfrak{a})\subset \Spec(S)$ 
the zero-locus of an ideal $\mathfrak{a}$, then $V(E_0(Q))=V(\Ann(Q))$. 

\subsection{Equations for the resonance varieties}
\label{subsec:eqresvar}
Once again, let $(A, \D)$ be a connected $\k$-$\cdga$ with $\dim_\k A^1<\infty$. 
Pick a basis $\{ e_1,\dots, e_n \}$ for the $\k$-vector space 
$H^1(A)$; to avoid trivialities, we shall assume that 
$n=b_1(A)$ is positive.  Let $\{ x_1,\dots, x_n \}$ be the Kronecker 
dual basis for the dual vector space $H_1(A)=(H^1(A))^*$.  
Upon identifying the symmetric algebra $\Sym(H_1(A))$ 
with the polynomial ring $S=\k[x_1,\dots, x_n]$, we 
obtain a cochain complex of finitely generated, free $S$-modules, 
\begin{equation}
\label{eq:univ aomoto}
\xymatrixcolsep{20pt}
(A\otimes S,\delta)\colon 
\xymatrix{
\cdots \ar[r] 
&A^{i}\otimes_{\k} S \ar^(.45){\delta^{i}_A}[r] 
&A^{i+1} \otimes_{\k} S \ar^(.5){\delta^{i+1}_A}[r] 
&A^{i+2} \otimes_{\k} S \ar[r] 
& \cdots},
\end{equation}
with differentials given by 
$\delta^{i}_A(u \otimes s)= \sum_{j=1}^{n} e_j u \otimes s x_j + \D u \otimes s$,  
for $u\in A^{i}$ and $s\in S$. 
It is readily verified that the evaluation of this cochain complex at an 
element $a\in H^1(A)$ coincides with the cochain complex $(A,\delta_a)$ 
from \eqref{eq:aomoto}.

Suppose that $A$ is $q$-finite, for some $q\ge 1$.  It is easy to see 
then that the sets $\RR^i_k(A)$ with $i<q$ are Zariski closed.
Indeed, an element $a \in A^1$ belongs to $\RR^i_k(A)$ if and only if 
\begin{equation}
\label{eq:rkdel}
\rank \delta^{i+1}_a + \rank \delta^{i}_a \le c_i -k.
\end{equation}
where $c_i=\dim_{\k} A^i$.  
Hence, $\RR^i_k(A)$ is the zero-set of the ideal generated 
by all minors of size $c_{i}-k+1$ of the block-matrix 
$\delta^{i+1}_A\oplus \delta^{i}_A$.  It turns out that the 
sets $\RR^q_k(A)$ are also Zariski closed even when 
$\dim_\k A^{q+1}=\infty$, see  \cite{DP-ccm, BW15}.

The degree $1$ resonance varieties  
$\RR^1_k(A)$ admit an even simpler description:  away 
from $\zero$, they are the vanishing loci of the codimension 
$k$ minors of $\delta^1_A$.  More precisely,
\begin{equation}
\label{eq:r1ka}
\RR^1_k(A)= \begin{cases}
V ( I_{n-k} (\delta^1_A) ) & \text{if $0<k<n$},\\ 
\{\zero\}  & \text{if $k=n$}.
\end{cases}
\end{equation}

\section{Characteristic varieties and the Alexander polynomial}
\label{sect:cvalex}

\subsection{Characteristic varieties}
\label{subsec:cv}
We say that a space $X$ is {\em $q$-finite}\/ 
(for some integer $q\ge 1$) if it has the homotopy type of a connected 
CW-complex with finite $q$-skeleton.  We will denote by 
$\pi=\pi_1(X,x_0)$ the fundamental group of such a space, 
based at a $0$-cell $x_0$.  Clearly, if the space $X$ is $1$-finite, 
the group $\pi$ is finitely generated, and if $X$ is $2$-finite, 
$\pi$ admits a finite presentation.  

So let $X$ be a $1$-finite space, and let 
$\Char(X)=\Hom(\pi,\C^*)$ be the group of complex-valued, 
multiplicative characters of $\pi$, whose identity $\one$ 
corresponds to the trivial representation. This is a complex 
algebraic group, which may be identified with $H^1(X,\C^*)$.  
The identity component, $\Char(X)^{0}$, is a an algebraic 
torus of dimension $n=b_1(X)$; the other connected components 
are translates of this torus by characters indexed by the torsion 
subgroup of $\pi_{\ab}=H_1(X,\Z)$. 

For each character $\rho\colon \pi\to \C^*$, let 
$\C_{\rho}$ be the corresponding rank $1$ local system on $X$.  
The {\em characteristic varieties}\/ of $X$ (in degree $i$ and depth $k$) 
are the jump loci for homology with coefficients in such local systems, 
\begin{equation}
\label{eq:cvx}
\VV^i_k(X)= \{\rho \in \Char(X) \mid  \dim H_i(X, \C_{\rho}) \ge k\}.
\end{equation}

In more detail, let $X^{\ab}\to X$ be the 
maximal abelian cover, with group of deck transformations 
$\pi_{\ab}$.  Upon lifting the cell structure of $X$ 
to this cover, we obtain a chain complex of $\Z[\pi_{\ab}]$-modules, 
\begin{equation}
\label{eq:equiv cc}
\xymatrixcolsep{24pt}
\xymatrix{\cdots \ar[r]& 
C_{i+1}(X^{\ab},\Z) \ar^(.53){\partial^{\ab}_{i+1}}[r] & 
 C_{i}(X^{\ab},\Z) \ar^(.45){\partial^{\ab}_{i}}[r] & 
  C_{i-1}(X^{\ab},\Z) \ar[r] & \cdots  
}.
\end{equation}

Tensoring this chain complex with the $\Z[\pi_{\ab}]$-module 
$\C_{\rho}$, we obtain a chain complex of $\C$-vector spaces,
\begin{equation}
\label{eq:eval cc}
\xymatrixcolsep{14pt}
\xymatrix{\cdots \ar[r]& C_{i+1}(X,\C_{\rho}) 
\ar^(.52){\partial^{\ab}_{i+1}(\rho)}[rr] &&
C_{i}(X,\C_{\rho}) \ar^(.47){\partial^{\ab}_{i}(\rho)}[rr] &&
C_{i-1}(X,\C_{\rho}) \ar[r] & \cdots  
},
\end{equation}
where the evaluation of $\partial^{\ab}_i$ at $\rho$ is obtained by applying 
the ring morphism $\C[\pi]\to \C$, $ g\mapsto \rho(g)$ to each entry.  
Taking homology in degree $i$ of this chain complex, we obtain the 
twisted homology groups $H_i(X, \C_{\rho})$ which appear in 
definition \eqref{eq:cvx}. 

If $X$ is a $q$-finite space, the sets $\VV^i_k(X)$ are Zariski closed 
subsets of the algebraic group $\Char(X)$, for all $i\le q$ and all $k\ge 0$.  
If $i<q$, this is again easy to see.  Indeed, let $R=\C[\pi_{\ab}]$ be the 
coordinate ring of the algebraic group $\Hom(\pi,\C^*)=\Hom(\pi_{\ab},\C^*)$. 
By definition, a character $\rho\in \Char(X)$ belongs to $\VV^i_k(X)$ 
if and only if
\begin{equation}
\label{eq:rho rank}
\rank \partial^{\ab}_{i+1}(\rho) + 
\rank \partial^{\ab}_{i}(\rho) \le c_i -k,
\end{equation}
where $c_i=c_i(X)$ is the number of $i$-cells of $X$.  
Hence, $\VV^i_k(X)$ is the zero-set of the ideal of 
minors of size $c_i-k+1$ of the block-matrix 
$\partial^{\ab}_{i+1} \oplus \partial^{\ab}_{i}$.  The case 
$i=q$ is covered in \cite[Lemma 2.1]{PS-plms} and 
\cite[Proposition 4.1]{PS-mrl}.

Clearly, $\VV^i_0(X)=\Char(X)$.  Moreover, 
$\one\in \VV^i_k(X)$ if and only if the 
$i$-th Betti number $b_i(X)$ is at least $k$. In degree $0$, 
we have that $\VV^0_1(X)= \{ \one \}$ and $\VV^0_k(X)= \myempty$ 
for $k>1$. 
In degree $1$, the sets $\VV^1_k(X)$ depend only 
on the fundamental group $\pi=\pi_1(X,x_0)$, and, in fact, only on 
its maximal metabelian quotient, $\pi/\pi''$; thus, we shall sometimes 
write these sets as $\VV^1_k(\pi)\subseteq\Char(\pi)$, 
and refer to them as the characteristic varieties of $\pi$.  

If $b_1(\pi)=0$, then $\Char(\pi)\subset \C^*$ is a finite set of roots of unity 
in bijection with $\pi_{\ab}$; although $1\notin \VV^1_1(\pi)$, 
other roots of unity may belong to $\VV^1_1(\pi)$.  For instance, 
if $\pi=\Z_2$, then $\Char(\pi)=\{1,-1\}$, while  $\VV^1_1(\pi)=\{-1\}$.

\subsection{Alexander varieties}
\label{subsec:alexinv}

There is an alternative, very useful interpretation of the de\-gree one  
characteristic varieties, first noted by Hironaka in \cite{Hi97}.  Namely, 
let $B_X= H_1(X^{\ab},\Z)$ be the {\em Alexander invariant}\/ of a $2$-finite 
space $X$, 
viewed as a $\Z[\pi_{\ab}]$-module, and let $\wV^1_k(X)=V(E_{k-1}(B_X \otimes \C))$ 
be the zero sets of the elementary ideals of the complexification 
of this module.  Then, at least away from the trivial representation, 
the degree $1$ characteristic varieties of $X$ coincide with the 
{\em Alexander varieties},
\begin{equation}
\label{eq:ve}
\VV^1_k(X) \setminus \set{\one} = \wV^1_k(X)  \setminus \set{\one}.
\end{equation}

Indeed, if $\rho\colon \pi\to \C^*$ is a non-trivial character, 
then, by the universal coefficients theorem, 
$H_1(X, \C_{\rho})$ has dimension at least $k$ if and only if 
$(B_X \otimes \C)\otimes_{\C[\pi_{\ab}]} \C_{\rho}$ has dimension 
at least $k$; in turn, this condition is equivalent to $\rho \in V(E_{k-1}(B_X \otimes \C))$. 

More generally, one may define the Alexander varieties  $\wV^i_k(X)$
as the zero sets of the ideals  $E_{k-1}(H_i(X^{\ab},\C))$.  Provided 
$X$ is $q$-finite, a formula analogous to \eqref{eq:ve} holds for all 
$i\le q$, but only in depth $k=1$, see  \cite[Corollary 3.7]{PS-plms}.

If $X$ is $2$-finite, the degree $1$ characteristic varieties can be 
computed algorithmically, starting from a finite presentation of the 
group $\pi=\pi_1(X)$. If $\pi=\langle  x_1,\dots ,x_{m}\mid r_1,\dots ,r_{s}\rangle$
is such a presentation, then $\partial^{\ab}_2\colon \Z[\pi_{\ab}]^s \to \Z[\pi_{\ab}]^m$, 
the second boundary map in the chain complex  \eqref{eq:equiv cc}, coincides with the 
Alexander matrix $\big(\partial_j r_i \big)^{\ab}$ of abelianized Fox derivatives of the 
relators.  An argument as above shows that $\VV^1_k(\pi)$ coincides, 
at least away from $\one$, with the zero locus of the ideal of codimension $k$ 
minors of $\partial^{\ab}_2$; that is, 
\begin{equation}
\label{eq:v1s}
\VV^1_k(\pi) \setminus \set{\one} =
V(E_{k}(\coker \partial^{\ab}_2)) \setminus \set{\one} .
\end{equation}

The characteristic varieties of a space or a group can be arbitrarily complicated.  
For instance, let $f\in \Z[t_1^{\pm 1},\dots , t_n^{\pm 1}]$ be an integral Laurent polynomial. 
Then, as shown in \cite{SYZ}, there is a finitely presented group $\pi$ 
with $\pi_{\ab}=\Z^n$ such that $\VV^1_1(\pi)=V(f)  \cup \{\one\}$.  
More generally, let $Z$ be a an algebraic subset of $(\C^*)^n$, defined 
over $\Z$, and let $j$ be a positive integer.  Then, as shown in 
\cite{Wa}, there is a finite, connected CW-complex $X$ with 
$\Char(X)=(\C^*)^n$ such that $\VV^i_1(X)=\{\one\}$ for $i<j$ and 
$\VV^j_1(X)=Z  \cup \{\one\}$. 

\subsection{The Alexander polynomials of a space}
\label{subsec:alexpoly}
Let $X$ be a $2$-finite space, with fundamental group $\pi=\pi_1(X)$.  
We shall let $H=\pi_{\ab}/\tor(\pi_{\ab})$ be the maximal torsion-free 
abelian quotient of $\pi$.  It is readily seen that the group ring $\Z[H]$ is a 
commutative Noetherian ring and a unique factorization domain.  

Let $q\colon X^{H}\to X$ be the regular cover corresponding to the 
projection $\pi\surj H$, i.e., the maximal torsion-free abelian cover of $X$.  
Fixing a basepoint $x_0\in X$, the {\em Alexander module}\/ of $X$ is 
defined as the relative homology group $A_X= H_1(X^H,q^{-1}(x_0), \Z)$, 
viewed as a $\Z[H]$-module.
For each integer $k\ge 0$, the {\em $k$-th Alexander ideal}\/ 
is the determinantal ideal $E_k(A_X)$, while the {\em $k$-th
Alexander polynomial}\/ is $\Delta_X^k=\gcd ( E_k(A_X))$, the 
greatest common divisor of the elements in the ideal 
$E_k(A_X)\subseteq \Z[H]$.

Fixing a basis for $H\cong \Z^n$, we may identify the group ring $\Z[H]$ 
with the ring of Laurent polynomials in $t_1^{\pm1},\dots , t_n^{\pm 1}$. 
The Laurent polynomials $\Delta_{X}^k\in \Z[H]$ are 
well-defined up to multiplication by units in this ring, i.e., 
 monomials of the form $\pm t_1^{a_1}\cdots t_n^{a_n}$ 
(the equivalence relation is written as $\doteq$).

Of particular importance is the polynomial $\Delta_X:=\Delta_X^0$,
simply called the {\em Alexander polynomial}\/ of $X$. As 
shown in \cite[Lemma II.5.5]{Tu}, if $b_1(X)\ge 2$ 
this ideal is contained in $\Delta_{X}\cdot I_H$, 
where $I_H=\ker(\varepsilon\colon \Z[H]\to \Z)$ is the 
augmentation ideal. 

\subsection{The zero sets of the Alexander polynomials}
\label{subsec:alexchar}

Henceforth, we identify the identity component of the character torus, 
$\Char(X)^0$, with the algebraic torus $\Hom(H,\C^*)=(\C^*)^n$, 
where recall $H$ is the torsion-free part of $H_1(X,\Z)$ and $n=b_1(X)$.
The Laurent polynomials in $n$ variables are precisely 
the regular functions on this algebraic torus. 
As such, the Alexander polynomials $\Delta_{X}^k$ 
define algebraic hypersurfaces, 
\begin{equation}
\label{eq:vdelta}
V(\Delta^k_X) = \set{\rho \in \Char(X)^0\mid \Delta^k_X(\rho)=0}.
\end{equation}

Write $\ZZ^1_k(X)=\VV^1_k(X)\cap \Char(X)^0$, and let 
$\cz^1_k(X)$ be the union of all
codimension-one irreducible components of $\ZZ^1_k(X)$. 
The next lemma details the relationships between the
hypersurfaces defined by the Alexander polynomials of
$X$ and the degree $1$ characteristic varieties of $X$. 

\begin{lemma}[\cite{DPS-imrn, FS14}]
\label{lem:delta cv}
For each $k\ge 1$, the following hold.
\begin{enumerate}
\item \label{dc1}
The polynomial $\Delta^{k-1}_X$ is identically $0$ if and only if
$\ZZ^1_k(X) = \Char(X)^0$, in which case $\cz^1_k(X)=\myempty$.
\item \label{dc2}
Suppose that $b_1(X)\ne 0$ and $\Delta^{k-1}_X\ne 0$. Then
$\cz^1_k(X)=V(\Delta^{k-1}_X)$ if $b_1(X)\ge 2$, 
and $\cz^1_k(X)=V(\Delta^{k-1}_X)\sqcup \{ \one\}$ if $b_1(X)=1$.
\item \label{dc3}
Suppose that $b_1(X)\ge 2$.  Then $\Delta^{k-1}_X\doteq 1$ if and only 
if $\cz^1_k(X)=\myempty$.
\end{enumerate}
\end{lemma}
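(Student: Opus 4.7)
The plan is to identify, inside the identity component of the character group, the degree-one characteristic variety with the zero set of a Fitting ideal, and then to apply the commutative algebra relating a Fitting ideal to the principal ideal generated by its greatest common divisor. First I would invoke formula \eqref{eq:ve} to rewrite $\VV^1_k(X) \setminus \{\one\}$ as the Alexander variety $V(E_{k-1}(B_X \otimes \C)) \setminus \{\one\}$. Restricting to $\Char(\pi)^0 = \Spec(\C[H])$ converts this into a statement about $A_X = H_1(X^H, \Z)$: since $X^H$ is the quotient of $X^{\ab}$ by the free action of $\tor(\pi_{\ab})$, only the trivial-isotypic piece of $B_X \otimes \C$ under $\C[\tor(\pi_{\ab})]$ survives, and under the projection $\C[\pi_{\ab}] \surj \C[H]$ the Fitting ideals descend correctly. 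Hence, off $\one$, we have $\ZZ^1_k(X) = V(E_{k-1}(A_X \otimes \C))$ as closed subsets of $\Char(\pi)^0$.

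Next, $\Z[H]$ is a Noetherian UFD and, by definition, $\Delta^{k-1}_X = \ord^{k-1}(A_X)$ generates the smallest principal ideal containing $E_{k-1}(A_X)$. For \eqref{dc1}, the chain of equivalences $\Delta^{k-1}_X \equiv 0 \Leftrightarrow E_{k-1}(A_X) = 0 \Leftrightarrow E_{k-1}(A_X \otimes \C) = 0 \Leftrightarrow \ZZ^1_k(X) = \Char(\pi)^0$ is immediate from the first step; then $\Char(\pi)^0$ is a full-dimensional component of $\bar\VV^1_k(X)$, and the same Fitting-ideal argument applied to each $\chi$-isotypic summand of $B_X \otimes \C$ shows that no codimension-one components of $\bar\VV^1_k(X)$ can lurk in the non-trivial torsion translates of $\Char(\pi)^0$, so $\cz^1_k(X) = \myempty$.

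For \eqref{dc2}, when $\Delta^{k-1}_X \ne 0$ one factors $E_{k-1}(A_X) = (\Delta^{k-1}_X) \cdot J$ with $J$ an ideal of height at least two, since by construction the gcd of its generators is a unit. Hence $V(E_{k-1}(A_X \otimes \C)) = V(\Delta^{k-1}_X) \cup V(J \otimes \C)$, pure of codimension one plus codimension $\ge 2$. Pulling back via \eqref{eq:ve}, the codimension-one components of $\bar\VV^1_k(X) \cap \Char(\pi)^0$ sitting off $\one$ are exactly the components of $V(\Delta^{k-1}_X)$. If $b_1(X) \ge 2$, then $\Char(\pi)^0$ has dimension $\ge 2$ and $\one$ can never be an isolated (codimension-one) component, giving $\cz^1_k(X) = V(\Delta^{k-1}_X)$. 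If instead $b_1(X) = 1$, then $\Char(\pi)^0 \cong \C^*$ is one-dimensional, codimension-one is the same as isolated, and $\one \in \VV^1_k(X)$ contributes separately as an isolated point, producing the disjoint union $V(\Delta^{k-1}_X) \sqcup \{\one\}$. Part \eqref{dc3} then reduces to the tautology that a hypersurface $V(\Delta^{k-1}_X) \subset (\C^*)^n$ is empty precisely when the Laurent polynomial $\Delta^{k-1}_X$ is a unit, i.e., $\Delta^{k-1}_X \doteq 1$.

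The step I expect to be the main obstacle is the reduction in the first paragraph: one must verify carefully that the Fitting ideal $E_{k-1}(A_X \otimes \C)$ really does coincide with the pushdown of $E_{k-1}(B_X \otimes \C)$ along $\C[\pi_{\ab}] \surj \C[H]$, and, in parts \eqref{dc1} and \eqref{dc3}, that no stray codimension-one components of $\bar\VV^1_k(X)$ arising from torsion translates of $\Char(\pi)^0$ spoil the conclusions. The cleanest way to handle this is a Shapiro-type identification $A_X \otimes_{\Z[H]} \C_\rho \cong B_X \otimes_{\Z[\pi_{\ab}]} \C_\rho$ for $\rho \in \Char(\pi)^0$, which ensures that the Fitting-ideal computation is genuinely the same on both sides.
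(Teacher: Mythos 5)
The paper offers no proof of this lemma---it is imported from \cite{DPS-imrn, FS14}---so there is nothing in the text to compare against line by line. Your overall strategy is the standard one from those sources and its two pillars are sound: away from $\one$, the set $\ZZ^1_k(X)$ is the zero locus of $E_{k-1}(A_X\otimes\C)$ in $\Spec(\C[H])$ (the descent of Fitting ideals along $\C[\pi_{\ab}]\surj\C[H]$ works because in characteristic $0$ the ring $\C[H]$ is a direct summand of $\C[\pi_{\ab}]$, so the pointwise identification you call ``Shapiro-type'' does hold); and in the UFD $\C[H]$ one has $E_{k-1}(A_X\otimes\C)=(\Delta^{k-1}_X)\cdot J$ with every minimal prime over $J$ of height at least two, so the codimension-one part of the zero locus is exactly $V(\Delta^{k-1}_X)$.

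There are, however, genuine loose ends. First, in part \eqref{dc1} your claim that ``no codimension-one components can lurk in the non-trivial torsion translates'' is not supported by anything you wrote: $\Delta^{k-1}_X$ only records the trivial isotypic summand of $B_X\otimes\C$ under $\C[\tor(\pi_{\ab})]$ and says nothing about the other summands, so nothing you prove excludes codimension-one pieces of $\VV^1_k(X)$ sitting in a torsion translate of $\Char(\pi)^0$. The lemma is only coherent (compare part \eqref{dc2}, whose right-hand side lies in $\Char(\pi)^0$) if $\cz^1_k(X)$ is taken inside the identity component, in which case the conclusion of \eqref{dc1} is immediate---$\Char(\pi)^0$ is irreducible of codimension zero, hence has no codimension-one components---and your excursion into torsion translates is both unnecessary and, as an argument, broken. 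Second, in the $b_1(X)=1$ case you assert $\one\in\VV^1_k(X)$; this holds precisely when $b_1(X)\ge k$, i.e.\ only for $k=1$ here, and the disjointness in $V(\Delta^{k-1}_X)\sqcup\{\one\}$ additionally needs $\Delta^{k-1}_X(\one)\ne 0$, which you never address. Third, part \eqref{dc3} is not the tautology you claim: the implication $\cz^1_k(X)=\myempty\Rightarrow\Delta^{k-1}_X\doteq 1$ also requires excluding $\Delta^{k-1}_X=0$, which by part \eqref{dc1} yields $\cz^1_k(X)=\myempty$ as well; your argument silently operates under the standing hypothesis $\Delta^{k-1}_X\ne 0$ of part \eqref{dc2}, and you should say so.
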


In particular, if $b_1(X)=0$, then $\Delta_X= 0$ and 
$\ZZ^1_1(X) = \Char(X)^0 = \{1\}$.

In a special type of situation (singled out in \cite{DPS-imrn}), 
the relationship between the first characteristic variety and the Alexander 
polynomial is even tighter.   

\begin{proposition}
\label{prop:zz1-pi}
Suppose $I^s_H\cdot ( \Delta_{X} ) \subseteq E_1(A_X)$, 
for some $s\ge 0$. Then 
\[
\ZZ^1_1(X) = V(\Delta_{X}) \cup \set{\one}.
\]
\end{proposition}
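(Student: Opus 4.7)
The plan is to sandwich the elementary ideal $E_1(\pi)\subset \Z[H]$ between the principal ideal $(\Delta_\pi)$ from above and the ideal $I_H^s\cdot (\Delta_\pi)$ from below, and then transfer the resulting double inclusion of zero-loci to $\ZZ^1_1(\pi)$ via formula \eqref{eq:v1s}. Since $\Delta_\pi$ is by definition the greatest common divisor of $E_1(\pi)$, we have $E_1(\pi)\subseteq(\Delta_\pi)$; in fact the sharper inclusion $E_1(\pi)\subseteq \Delta_\pi\cdot I_H$ recalled in \S\ref{subsec:alexpoly} from \cite[Lemma II.5.5]{Tu} is available. Dualizing yields $V(\Delta_\pi)\subseteq V(E_1(\pi))$ inside $\Char(\pi)^0=\Hom(H,\C^*)$.

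In the opposite direction, the standing hypothesis $I_H^s\cdot (\Delta_\pi)\subseteq E_1(\pi)$ gives
\[
V(E_1(\pi))\subseteq V\bigl(I_H^s\cdot (\Delta_\pi)\bigr)=V(I_H^s)\cup V(\Delta_\pi)=\{\one\}\cup V(\Delta_\pi),
\]
using that $V(I_H^s)=V(I_H)=\{\one\}$ inside the identity component of the character variety. Combining the two sandwich inclusions gives $V(E_1(\pi))\cup\{\one\}=V(\Delta_\pi)\cup\{\one\}$. Formula \eqref{eq:v1s}, restricted to $\Char(\pi)^0$, identifies $\ZZ^1_1(\pi)\setminus\{\one\}$ with $V(E_1(\pi))\cap \Char(\pi)^0\setminus\{\one\}$, so appending $\one$ to both sides and observing that $\one\in\ZZ^1_1(\pi)$ whenever $b_1(\pi)\ge 1$ (the only case of interest, since $\Char(\pi)^0=\{\one\}$ when $b_1(\pi)=0$) gives the desired equality $\ZZ^1_1(\pi)=V(\Delta_\pi)\cup\{\one\}$.

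The main obstacle is the careful bookkeeping at the basepoint character $\one$: formula \eqref{eq:v1s} only describes $\VV^1_1(\pi)$ away from $\one$, so membership of $\one$ in $\ZZ^1_1(\pi)$ must be certified separately by a Betti-number argument rather than extracted from the ideal sandwich. A secondary technical point is matching the Fitting ideal $E_1(\coker\partial^{\ab}_2)$ of \eqref{eq:v1s}, which lives over $\C[\pi_{\ab}]$, with the ideal $E_1(\pi)\subset\Z[H]$ appearing in the hypothesis; once one restricts to the identity component this reduces to base change along $\Z[\pi_{\ab}]\twoheadrightarrow \Z[H]$, under which Fitting ideals of finitely presented modules pass to Fitting ideals, so the two ideals cut out the same subscheme of $\Char(\pi)^0$ and the argument above applies verbatim.
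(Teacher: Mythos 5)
Your proof is correct and is essentially the intended argument: the paper states this proposition without proof, deferring to \cite{DPS-imrn}, and the argument there is exactly your ideal sandwich $I_H^s\cdot(\Delta_\pi)\subseteq E_1(\pi)\subseteq(\Delta_\pi)$, passed to zero loci and combined with \eqref{eq:v1s}, with separate bookkeeping at $\one$. Your attention to the base change from $\C[\pi_{\ab}]$ to $\C[H]$ and to the degenerate case $b_1(\pi)=0$ (which the visibly truncated hypothesis clause of the statement should be read as excluding) is appropriate.
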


In particular, if $H_1(X,\Z)$ is torsion-free and the assumption of 
Proposition \ref{prop:zz1-pi} holds, then $\VV^1_1 (X)$ itself 
coincides with $V(\Delta_X)$, at least away from $\one$; if, 
moreover, $\Delta_X(\one)=0$, then $\VV^1_1 (X)=V(\Delta_X)$.

\section{Algebraic models and the tangent cone theorem}
\label{sect:tcone}

\subsection{Tangent cones}
\label{subsec:exp tc}

We start by reviewing two constructions which provide 
approximations to a subvariety $W$ of a complex algebraic 
torus $(\C^*)^n$. The first one is the classical tangent cone, 
while the second one is the exponential tangent cone, 
a construction introduced in \cite{DPS-duke} and further 
studied in \cite{Su-imrn,DP-ccm,SYZ}.

Let $I$ be an ideal in the Laurent polynomial ring   
$\C[t_1^{\pm 1},\dots , t_n^{\pm 1}]$ such that $W=V(I)$.   
Picking a finite generating set for $I$, and multiplying 
these generators with suitable monomials if necessary, 
we see that $W$ may also be defined by the ideal $I\cap R$ 
in the polynomial ring $R=\C[t_1,\dots,t_n]$.  
Let $J$ be the ideal  in the polynomial ring 
$S=\C[x_1,\dots, x_n]$ generated by the polynomials 
$g(x_1,\dots, x_n)=f(x_1+1, \dots , x_n+1)$, 
for all $f\in I\cap R$. 

The {\em tangent cone}\/ of $W$ at $\one \in (\C^*)^n$ is the algebraic 
subset $\TC_{\one}(W)\subseteq \C^n$ defined by the ideal 
$\init(J)\subset S$ generated by the initial forms of 
all non-zero elements from $J$.  The set 
$\TC_{\one}(W)$ is a homogeneous subvariety of $\C^n$, 
which depends only on the analytic germ of $W$ at 
the identity.  In particular, $\TC_{\one}(W)\ne \myempty$ 
if and only if $\one\in W$.  

Let $\exp\colon \C^n \to (\C^*)^n$ be the exponential map, 
given in coordinates by $x_i\mapsto e^{x_i}$.  
The {\em exponential tangent cone}\/ at $\one$ 
to a subvariety $W\subseteq (\C^*)^n$ is the set  
\begin{equation}
\label{eq:tau1}
\tau_{\one}(W)= \{ x\in \C^n \mid \exp(\lambda x)\in W,\ 
\text{for all $\lambda \in \C$} \}.
\end{equation}
It is readily seen that $\tau_{\one}$ commutes with finite unions and 
arbitrary intersections. Furthermore, $\tau_{\one}(W)$ only depends 
on $W_{(\one)}$, the analytic germ of $W$ at the identity; in particular, 
$\tau_{\one}(W)\ne \myempty$ if and only if $\one\in W$.  The main 
property of this construction is encapsulated in the following lemma.

\begin{lemma}[\cite{DPS-duke, Su-imrn, SYZ}] 
\label{lem:exp-tcone}
The exponential tangent cone $\tau_{\one}(W)$  of a subvariety 
$W\subseteq (\C^*)^n$ is a finite union of rationally defined linear 
subspaces of the affine space $\C^n$.  
\end{lemma}
 
For instance, if $W$ is an algebraic subtorus of $(\C^{*})^n$,  
then $\tau_{\one}(W)$ equals $\TC_{\one}(W)$, and both 
coincide with $T_{\one}(W)$, 
the tangent space to $W$ at the identity $\one$. 
More generally, there is always an inclusion between the two 
types of tangent cones associated to an  algebraic subset 
$W\subseteq (\C^{*})^n$, namely, 
\begin{equation}
\label{eq:ttinc}
\tau_{\one}(W)\subseteq \TC_{\one}(W). 
\end{equation}

As we shall see,  though, 
this inclusion is far from being an equality for arbitrary $W$. 
For instance, the tangent cone $\TC_{\one}(W)$ may 
be a non-linear, irreducible subvariety of $\C^n$, or $\TC_{\one}(W)$ 
may be a linear space containing the exponential tangent cone 
$\tau_{\one}(W)$ as a union of proper linear subspaces.

\subsection{The Exponential Ax--Lindemann theorem}
\label{subsec:ax}

In \cite{BW17}, Budur and Wang establish the following version of the 
Exponential Ax--Lindemann theorem \cite{Ax}, which proves 
to be very useful in this context.

\begin{theorem}[\cite{BW17}]
\label{thm:ax}
Let $V\subseteq \C^n$ and $W\subseteq (\C^*)^n$ be irreducible algebraic subvarieties.
\begin{enumerate}
\item \label{bw1}
Suppose $\dim V=\dim W$ and $\exp(V)\subseteq W$.  Then $V$ is a translate 
of a linear subspace, and $W$ is a translate of an algebraic subtorus. 
\item \label{bw2}
Suppose the exponential map $\exp\colon \C^n \to (\C^*)^n$ induces a 
local analytic isomorphism  $V_{(\zero)} \to W_{(\one)}$. Then  
$W_{(\one)}$ is the germ of an algebraic subtorus. 
\end{enumerate}
\end{theorem}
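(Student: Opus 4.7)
My plan is to derive part (1) from the classical Ax--Lindemann theorem for the exponential map, and to bootstrap part (2) from part (1) by passing to a Zariski closure.

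For part (1), set $d:=\dim V=\dim W$. Since $\exp\colon \C^n \to (\C^*)^n$ is a local biholomorphism at every point, near any smooth point $v\in V$ the map $\exp$ sends a neighborhood of $v$ in $V$ biholomorphically onto a $d$-dimensional analytic subvariety of $(\C^*)^n$ contained in $W$. As $W$ is irreducible of dimension $d$, the Zariski closure $\overline{\exp(V)}^{\mathrm{Zar}}\subseteq W$ has dimension at least $d$, hence equals $W$. I would then invoke Ax's theorem: for an irreducible algebraic subvariety $V\subseteq \C^n$, if $\overline{\exp(V)}^{\mathrm{Zar}}\subseteq (\C^*)^n$ has the same dimension as $V$, then $V$ is a translate of a linear subspace and $\overline{\exp(V)}^{\mathrm{Zar}}$ is a translate of an algebraic subtorus. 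This is precisely the conclusion of part (1).

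For part (2), I would reduce to part (1) by passing to a Zariski closure. Set $W_0:=\overline{\exp(V)}^{\mathrm{Zar}}$; this is an irreducible algebraic subvariety of $(\C^*)^n$ (being the Zariski closure of the image of an irreducible variety under $\exp$), and it contains $\one=\exp(\zero)$. The local isomorphism hypothesis guarantees that $\exp(V)$ contains a $\dim V$-dimensional analytic set near $\one$, forcing $\dim W_0\geq \dim V$; the reverse inequality is automatic, so $\dim W_0=\dim V$. Part (1) applied to the pair $(V,W_0)$ then shows that $W_0$ is a translate of an algebraic subtorus, and since $\one\in W_0$ this translate must be trivial, so $W_0$ is an algebraic subtorus $T$; it also shows $V$ is a linear subspace through $\zero$, so $V_{(\zero)}$ is a smooth irreducible germ of dimension $\dim V$, identified with $W_{(\one)}$ via $\exp$. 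Thus $W_{(\one)}$ is an irreducible germ of dimension $\dim V$ contained in the irreducible germ $T_{(\one)}$ of the same dimension, and the two germs coincide, proving that $W_{(\one)}$ is the germ of an algebraic subtorus.

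The real obstacle is part (1): it rests on Ax's theorem, the functional analog of Schanuel's conjecture, whose proof uses genuinely deep transcendence arguments for differential fields. Everything else is formal bookkeeping with germs, Zariski closures, and irreducible components. The subtlest point in my reduction for part (2) is ensuring $\dim W_0=\dim V$, which uses the standard fact that the Zariski closure of an analytic set has dimension at least that of the analytic set, applied here to the image of a small neighborhood of $\zero$ in $V$.
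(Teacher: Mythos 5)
First, note that the paper offers no proof of this statement: it is quoted from Budur--Wang \cite{BW17}, who derive it from Ax's theorem \cite{Ax}, so your proposal is being measured against that intended provenance rather than an in-paper argument. Your treatment of part (1) is fine: reducing to the classical Ax--Lindemann statement (if $\dim \overline{\exp(V)}^{\mathrm{Zar}} = \dim V$ then $V$ is a translate of a rational linear subspace and the closure is a translate of a subtorus) via the observation that $\overline{\exp(V)}^{\mathrm{Zar}} = W$ is exactly the right move, and the deep input is correctly isolated.

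Part (2), however, has a genuine gap at the sentence ``the reverse inequality is automatic.'' The inequality $\dim \overline{\exp(V)}^{\mathrm{Zar}} \le \dim V$ is \emph{false} for a general irreducible $V \subseteq \C^n$: take $n=2$ and $V = \{(z, \sqrt{2}\,z) : z\in\C\}$, a line through $\zero$; then $\exp(V)$ is a dense one-parameter subgroup of $(\C^*)^2$ and its Zariski closure is all of $(\C^*)^2$, of dimension $2 > 1 = \dim V$. Indeed, the whole content of Ax's theorem is that this dimension jump occurs unless $V$ is a translate of a rational linear subspace, so asserting the inequality as automatic comes close to assuming what is to be proved. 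The inequality does hold under the hypotheses of part (2), but for a reason you have not supplied: the local containment $\exp(V\cap U)\subseteq W$ propagates to the global containment $\exp(V)\subseteq W$, because $V\cap \exp^{-1}(W)$ is a closed analytic subset of $V$ with nonempty interior, and $V$ is irreducible as an analytic set (its smooth locus is connected), so this subset is all of $V$. Once you have $\exp(V)\subseteq W$ you do not need $W_0$ at all: the germ isomorphism gives $\dim V=\dim W$ (local dimension equals global dimension for irreducible varieties), so part (1) applies directly to the pair $(V,W)$ and shows that $W$ is a translate of a subtorus through $\one$, hence a subtorus, which is stronger than the stated conclusion. The remainder of your argument (identifying $W_{(\one)}$ with $T_{(\one)}$ by comparing irreducible germs of equal dimension) is correct but becomes unnecessary after this repair.
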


A standard dimension argument shows the following: 
if $W$ and $W'$ are irreducible algebraic subvarieties of 
$(\C^*)^n$ which contain $\one$ and whose germs at 
$\one$ are locally analytically isomorphic, then $W\cong W'$.  
Using this fact, we obtain the following corollary to part \eqref{bw2} of 
the above theorem.

\begin{corollary}
\label{cor:ax-bis}
Let $V\subseteq \C^n$ and $W\subseteq (\C^*)^n$ be irreducible 
algebraic subvarieties.  Suppose the exponential map 
$\exp\colon \C^n \to (\C^*)^n$ induces a local analytic 
isomorphism $V_{(\zero)} \cong W_{(\one)}$. Then $W$ is 
an algebraic subtorus and $V$ is a rationally defined 
linear subspace.
\end{corollary}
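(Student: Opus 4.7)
The strategy is to chain together the two parts of Theorem \ref{thm:ax} with the standard dimension argument that was just recalled, and then transport the result from $W$ back to $V$ via a small neighborhood on which $\exp$ is a biholomorphism.

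First I would apply Theorem \ref{thm:ax}\eqref{bw2} to the hypothesis, obtaining an algebraic subtorus $T\subseteq (\C^*)^n$ with $W_{(\one)}=T_{(\one)}$. Both $W$ and $T$ are irreducible algebraic subvarieties of $(\C^*)^n$ containing $\one$, and their germs at $\one$ are (trivially) locally analytically isomorphic. The dimension argument recalled just before the statement of the corollary then forces $W=T$, so $W$ is an algebraic subtorus. This yields the first assertion.

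Second, I would exploit the explicit structure of a subtorus. There is a rationally defined linear subspace $L\subseteq\C^n$ (namely, the $\C$-span of the character lattice of $T$) with $\exp(L)=T$, and in fact $\exp^{-1}(T)=L+2\pi i \Lambda$ for the cocharacter lattice $\Lambda\subset\Z^n$. Hence, on a sufficiently small open neighborhood $U$ of $\zero\in\C^n$ on which $\exp|_U$ is a biholomorphism onto its image, we have the equality $\exp^{-1}(T)\cap U = L\cap U$.

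Third, I would compare $V$ to $L$. Shrinking $U$ if necessary, the hypothesis that $\exp$ induces a local analytic isomorphism $V_{(\zero)}\isom W_{(\one)}$ means $\exp(V\cap U) = W\cap \exp(U) = T\cap \exp(U) = \exp(L\cap U)$. Applying $(\exp|_U)^{-1}$ to both sides gives $V\cap U = L\cap U$, so $V$ and $L$ share the same analytic germ at $\zero$. Since $V$ and $L$ are both irreducible algebraic subvarieties of $\C^n$ whose germs at $\zero$ coincide, they are equal, and $V=L$ is a rationally defined linear subspace.

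There is no real obstacle here beyond bookkeeping: the content is entirely supplied by Theorem \ref{thm:ax}\eqref{bw2} and the dimension remark. The only point that merits care is choosing $U$ small enough that $\exp^{-1}(T)\cap U$ involves only the sheet through the origin, rather than other translates of $L$ by the lattice $2\pi i\Lambda$; this is what allows the injectivity of $\exp|_U$ to upgrade the equality of $\exp$-images to an equality of germs.
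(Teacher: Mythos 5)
Your proposal is correct and follows essentially the same route as the paper: apply Theorem \ref{thm:ax}\eqref{bw2} to identify $W_{(\one)}$ with the germ of a subtorus $T$, use the dimension remark (equivalently, the fact that an irreducible variety is determined by its germ at any of its points) to get $W=T$, and then pull the linear subspace $L=\exp^{-1}(T)_{(\zero)}$ back through the local biholomorphism to conclude $V=L$. The paper leaves the second half implicit, and your write-up supplies exactly the intended details, including the one point that needs care (choosing the neighborhood small enough that only the sheet of $\exp^{-1}(T)$ through the origin is seen).
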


\subsection{Tangent cones and jump loci}
\label{subsec:tcone thm}

Let $X$ be a $q$-finite space. Its cohomology algebra, 
$H^{\hdot}(X,\C)$, is then $q$-finite; thus, the resonance varieties 
$\RR^i_k(X):=\RR^i_k(H^{\hdot}(X,\C))$ are homogeneous 
algebraic subsets of the affine space $H^{1}(X,\C)$, for all $i\le q$ 
and $k\ge 0$.  

The following basic relationship between the characteristic 
and resonance varieties was established by Libgober in \cite{Li02} in 
the case when $X$ is a finite CW-complex and $i$ is arbitrary; a similar 
proof works in the generality that we work in here (see \cite{Su-tcone, DS18} 
for an even more general setup).

\begin{theorem}[\cite{Li02}]
\label{thm:lib}
Suppose $X$ is a $q$-finite space.  Then, for all $i\le q$ and $k\ge 0$, 
\begin{equation}
\label{eq:tc lib}
\TC_{\one}(\VV^i_k(X))\subseteq \RR^i_k(X).
\end{equation}
\end{theorem}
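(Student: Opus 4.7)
The plan is to exploit the determinantal descriptions of both types of jump loci and then to identify the linearization at $\one$ of the equivariant cellular boundary maps with the universal Aomoto complex of $A = H^{\hdot}(X,\C)$. By \eqref{eq:rho rank}, in a neighborhood of $\one$ in $\Char(X)^0$ the variety $\VV^i_k(X)$ is cut out by the ideal $I$ of $(c_i-k+1)$-minors of the block matrix $\partial^{\ab}_{i+1} \oplus \partial^{\ab}_i$, whose entries are Laurent polynomials in the coordinates $t_1,\ldots,t_n$. The substitution $t_\ell = 1 + x_\ell$ identifies a neighborhood of $\one$ with a neighborhood of $\zero$ in $\C^n = H^1(X,\C)$, and $\TC_{\one}(\VV^i_k(X))$ is the zero locus of the initial ideal $\init(I)$.

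The central step is the linearization. Writing each entry of $\partial^{\ab}_j$ as a power series,
\[
\partial^{\ab}_j \;=\; \partial^{(0)}_j + \partial^{(1)}_j + \partial^{(2)}_j + \cdots,
\]
with $\partial^{(r)}_j$ homogeneous of degree $r$ in $x_1,\ldots,x_n$, the constant term $\partial^{(0)}_j$ is the ordinary cellular boundary of $X$ with $\C$-coefficients, whose cohomology is $A$. The content of the argument is that the linear term $\partial^{(1)}_j$, viewed as a matrix of linear forms in the $x_\ell$, descends on the cohomology of the constant-term complex $\partial^{(0)}$ to (the dual of) the universal Aomoto differential of $A$ from \eqref{eq:univ aomoto}, i.e.\ to cup product with the tautological class $\sum_\ell e_\ell \otimes x_\ell \in A^1 \otimes S$, where $S=\Sym(H_1(X,\C))$. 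This is a classical Fox-calculus identity relating the first-order terms of the abelianized Fox derivatives to the cup-product structure of $X$.

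With these pieces in hand, the inclusion reduces to a linear-algebra comparison. If $a \in \TC_{\one}(\VV^i_k(X))$, then along a formal tangent arc to $\VV^i_k(X)$ in the direction $a$ the inequality $\rank \partial^{\ab}_{i+1} + \rank \partial^{\ab}_i \le c_i - k$ persists. The standard fact that a perturbed matrix $M_0 + tM_1 + \cdots$ has generic rank $\rank M_0 + \rank \tilde M_1$, where $\tilde M_1 : \ker M_0 \to \coker M_0$ is the map induced by $M_1$, then translates the persistent rank bound into the inequality $\rank \delta^{i}_a + \rank \delta^{i-1}_a \le b_i(A) - k$; hence $\dim H^i(A,\delta_a) \ge k$, so $a \in \RR^i_k(X)$. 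The main obstacle is making the identification of $\partial^{(1)}_j$ with the Aomoto differential precise at the level of cohomology of $\partial^{(0)}$: one must track the filtration by powers of the augmentation ideal in $\C[\pi_{\ab}]$ and invoke a spectral-sequence or minimal-model argument to see that higher-order contributions of the $\partial^{(r)}_j$ with $r\ge 2$ do not disturb the leading-order rank comparison. The $q$-finiteness hypothesis is exactly what ensures that all relevant matrices are of finite size, and that this truncation is legitimate, throughout the range $i \le q$.
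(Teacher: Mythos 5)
The paper does not actually prove this statement: it quotes it from Libgober \cite{Li02}, remarking only that ``a similar proof works'' in the present generality, with pointers to \cite{DS18, Su-tcone}. So your proposal has to be measured against that argument. Its architecture --- expand $\partial^{\ab}_j$ at $\one$ in powers of $x_\ell=t_\ell-1$, identify the first-order term with the Aomoto differential of $A=H^{\hdot}(X,\C)$, and finish with a rank-semicontinuity comparison of the two determinantal descriptions \eqref{eq:rho rank} and \eqref{eq:rkdel} --- is exactly the standard route, and the final bookkeeping is sound: since $c_i-\rank\partial^{(0)}_{i+1}-\rank\partial^{(0)}_i=b_i(A)$, and since the maps induced by $\partial^{(1)}_j(a)$ on $\ker\partial^{(0)}_j\to\coker\partial^{(0)}_j$ have the same ranks as the induced maps on homology (because the deformed differential squares to zero), the persistent bound does descend to $\rank\delta^i_a+\rank\delta^{i-1}_a\le b_i(A)-k$.

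The genuine gap is the step you yourself call ``the main obstacle'': identifying $[\partial^{(1)}_j(a)]$, acting on $H_{\hdot}(X,\C)$, with (the dual of) multiplication by $a$ in $H^{\hdot}(X,\C)$. The ``classical Fox-calculus identity'' only covers $i=1$, where the linearized Alexander matrix computes the cup products $H^1\otimes H^1\to H^2$; for $i\ge 2$ the cellular chain complex of $X^{\ab}$ carries no multiplicative structure, so there is nothing in it from which cup product can be read off directly. This identification is the actual content of Libgober's theorem, and the known proofs obtain it by passing to a multiplicative cochain model (equivariant singular or Sullivan cochains, as in \cite{DS18, Su-tcone}), not by the spectral-sequence truncation you gesture at --- the higher-order terms $\partial^{(r)}$, $r\ge 2$, are harmless for the direction of inequality you need, so they are not where the difficulty lies. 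Two smaller repairs: your ``standard fact'' that $M_0+tM_1+\cdots$ has generic rank $\rank M_0+\rank\widetilde{M}_1$ is false as an equality (take $\mathrm{diag}(t,t^2)$); only the inequality $\ge$ holds, which fortunately is all you use. And a point $a\in\TC_{\one}(\VV^i_k(X))$ need not be the velocity of an arc; by the curve selection lemma it is the leading coefficient of an analytic arc $\gamma(t)=\one+t^m a+O(t^{m+1})$ with possibly $m>1$. The rank argument survives, since the $\partial^{(r)}$ with $r\ge 2$ then contribute only at order $t^{2m}$, but this has to be said explicitly.
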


Putting together these inclusions with those from \eqref{eq:ttinc}, 
we obtain the following corollary. 
\begin{corollary}
\label{cor:tcone inc}
Suppose $X$ is a $q$-finite space.  Then, for all $i\le q$ and $k\ge 0$, 
\begin{equation}
\label{eq:tc inc}
\tau_{\one}(\VV^i_k(X))\subseteq  
\TC_{\one}(\VV^i_k(X))\subseteq \RR^i_k(X).
\end{equation}
\end{corollary}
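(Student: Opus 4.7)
The plan is essentially to concatenate two results that are already available in the excerpt, once one verifies that both tangent cone constructions are defined on the jump locus in question. First I would note that for a $q$-finite space $X$ and any $i\le q$, the characteristic variety $\VV^i_k(X)$ is a Zariski closed subset of the algebraic group $\Char(X)$, as spelled out in \S\ref{subsec:cv} (either via the minors of the block matrix $\partial^{\ab}_{i+1}\oplus \partial^{\ab}_{i}$ when $i<q$, or by invoking \cite{PS-plms, PS-mrl} when $i=q$). In particular, after identifying $\Char(X)^{0}$ with $(\C^{*})^n$ where $n=b_1(X)$, the intersection $\VV^i_k(X)\cap \Char(X)^{0}$ is an algebraic subvariety to which both $\TC_{\one}$ and $\tau_{\one}$ apply.

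Next, the right-hand inclusion $\TC_{\one}(\VV^i_k(X))\subseteq \RR^i_k(X)$ is literally the content of Theorem~\ref{thm:lib} (Libgober's theorem), so nothing new needs to be proved there. For the left-hand inclusion $\tau_{\one}(\VV^i_k(X))\subseteq \TC_{\one}(\VV^i_k(X))$, I would invoke the general containment \eqref{eq:ttinc}, which holds for any algebraic subset $W\subseteq (\C^{*})^n$ and was established in \cite{DPS-duke, Su-imrn}. Applying this to $W=\VV^i_k(X)$ (or, more precisely, to each irreducible component of $\bar{\VV}^i_k(X)\cap \Char(X)^{0}$) yields the first inclusion. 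Since $\tau_{\one}$ commutes with finite unions and depends only on the analytic germ of $W$ at $\one$, no information is lost when restricting to the identity component of $\Char(X)$.

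Chaining the two inclusions then gives the desired chain \eqref{eq:tc inc}. There is no real obstacle here beyond a bookkeeping remark: one should mention explicitly that the case $i=q$ (where $A^{q+1}$ need not be finite-dimensional) is covered by the same argument, because Libgober's theorem is stated in this generality and the Zariski-closedness of $\VV^q_k(X)$ is guaranteed by the results in \cite{PS-plms, PS-mrl} cited in \S\ref{subsec:cv}. Thus the corollary is immediate from Theorem~\ref{thm:lib} combined with \eqref{eq:ttinc}.
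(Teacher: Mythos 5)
Your proposal is correct and follows exactly the paper's own route: the right-hand inclusion is Theorem~\ref{thm:lib}, the left-hand inclusion is the general containment \eqref{eq:ttinc} applied to $W=\VV^i_k(X)$, and the two are simply concatenated. The additional remarks on Zariski-closedness (including the case $i=q$ via \cite{PS-plms, PS-mrl}) are sound bookkeeping that the paper leaves implicit.
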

Note that we may replace in Corollary \ref{cor:tcone inc} the characteristic varieties 
$\VV^i_k(X)$ by the subvarieties $\ZZ^i_k(X)=\VV^i_k(X)\cap \Char^0(X)$.  
Also note that, if $\RR^i_k(X)$ is empty or equal to $\{\zero\}$, then all of the 
above inclusions become equalities. In particular, $\tau_{\one}(\VV^1_1(X))=
\TC_{\one}(\VV^1_1(X))= \RR^1_1(X)$ if $b_1(X)\le 1$. 
In general, though, each of the inclusions from \eqref{eq:tc inc}---%
or both---can be strict, as examples to follow will show. 

A particular case of the above corollary is worth mentioning separately.

\begin{corollary}
\label{cor:tcone gps}
Let $\pi$ be a finitely generated group.  Then, for all $k\ge 0$, 
\[
\tau_{\one}(\VV^1_k(\pi))\subseteq  
\TC_{\one}(\VV^1_k(\pi))\subseteq \RR^1_k(\pi).
\]
\end{corollary}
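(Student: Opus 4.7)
The corollary should follow at once from Corollary \ref{cor:tcone inc} once we exhibit a $1$-finite space $X$ with $\pi_1(X)\cong \pi$ and verify that both $\VV^1_k$ and $\RR^1_k$ depend only on the fundamental group. The plan is to construct such an $X$ from a finite generating set of $\pi$, then reduce $\VV^1_k(\pi)$ and $\RR^1_k(\pi)$ (as defined via the classifying space $K(\pi,1)$) to $\VV^1_k(X)$ and $\RR^1_k(X)$, respectively.

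First I would build a CW-complex $X$ with one $0$-cell, one $1$-cell for each generator of a chosen finite generating set $\{g_1,\dots,g_m\}$ of $\pi$, and a suitable (possibly infinite) collection of $2$-cells that kills exactly the kernel of the canonical surjection $F_m\surj \pi$. The resulting $2$-complex has finite $1$-skeleton, so $X$ is $1$-finite, and by construction $\pi_1(X)=\pi$. Corollary \ref{cor:tcone inc}, applied with $q=1$ and $i=1$, yields
\begin{equation*}
\tau_{\one}(\VV^1_k(X))\subseteq  \TC_{\one}(\VV^1_k(X))\subseteq \RR^1_k(X)
\end{equation*}
for every $k\ge 0$.

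Next I would identify the two sides with the group-theoretic objects. For the characteristic varieties this is already noted in \S\ref{subsec:cv}: the sets $\VV^1_k(X)$ depend only on $\pi/\pi''$, and in particular $\VV^1_k(X)=\VV^1_k(\pi)$. For the resonance varieties one uses the explicit description of $\RR^1_k$ recalled after \eqref{eq:rra}: a nonzero $a\in H^1(X,\C)$ belongs to $\RR^1_k(X)$ iff there exist $u_1,\dots,u_k$ with $\{a,u_1,\dots,u_k\}$ linearly independent and $a\cup u_j=0$ in $H^2(X,\C)$ for all $j$. Hence $\RR^1_k(X)$ is determined by the restriction of the cup product pairing $H^1(X,\C)\otimes H^1(X,\C)\to H^2(X,\C)$. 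Naturality of cup products applied to the classifying map $f\colon X\to K(\pi,1)$, together with the facts that $f^*$ is an isomorphism on $H^1$ and a monomorphism on $H^2$, shows that this restricted pairing is the same as the one coming from group cohomology. Therefore $\RR^1_k(X)=\RR^1_k(\pi)$.

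Combining these two identifications with the inclusions displayed above completes the argument. There is no serious obstacle here: the only mildly technical point is checking that $\RR^1_k(X)$ really depends only on $\pi_1$, which reduces to the standard naturality/injectivity argument for the classifying map on $H^{\le 2}$. Note in particular that one does not need $X$ to be $2$-finite, since the description \eqref{eq:r1ka} shows that $\RR^1_k$ only involves the differential $\delta^1_A$ landing in $A^2$, whose relevant kernel is controlled by the cup product structure on cohomology in degrees at most $2$.
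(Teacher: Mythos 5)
Your argument is correct and matches the paper's (implicit) reasoning: the paper simply records this as a particular case of Corollary \ref{cor:tcone inc}, obtained by realizing $\pi$ as the fundamental group of a $1$-finite space and using that both $\VV^1_k$ and $\RR^1_k$ depend only on $\pi$ (the latter via the classifying map being an isomorphism on $H^1$ and injective on $H^2$). The details you supply---including the observation that $2$-finiteness is not needed because $\RR^1_k$ only involves $\delta^1_A$---are exactly the routine verifications the paper leaves to the reader.
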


\subsection{Algebraic models for spaces and groups}
\label{subsec:algmod}

Given a space $X$, we let $A_{\PL}^\hdot(X)$ be the 
commutative differential graded $\Q$-algebra of 
rational polynomial forms, as defined by Sullivan in \cite{Su77} 
(see \cite{FHT, FHT2, FOT} for a detailed exposition).  
There is then a natural isomorphism 
$H^\hdot(A_{\PL}(X)) \cong H^\hdot(X,\Q)$ under which the respective 
induced homomorphisms in cohomology correspond. 

As before, let $\k$ be a field of characteristic $0$, and $q$ a positive integer.  
We say that a $\k$-$\cdga$ $(A,d)$ is a {\em model}\/ (or just a {\em $q$-model}) over $\k$ 
for $X$ if $A$ is weakly equivalent (or just $q$-equivalent) to $A_{\PL}(X)\otimes_{\Q} \k$. 
For instance, if $X$ is a smooth manifold, then $\Omega^{\hdot}_{\dR}(X)$,  
the de Rham algebra of smooth forms on $X$, is a model of $X$ over $\R$.  
By considering a classifying space $X=K(\pi, 1)$ for a group $\pi$, we may speak 
about $q$-models for groups. 

A continuous map $f\colon X\to Y$ is said to be a {\em $q$-rational 
homotopy equivalence}\/ if the induced homomorphism 
$f^*\colon H^{i}(Y,\Q)\to H^{i}(X,\Q)$ is an isomorphism for 
$i\le q$ and a monomorphism for $i=q+1$.  
Such a map induces a $q$-equivalence 
$A_{\PL}(f)\colon A_{\PL}(Y) \to A_{\PL}(X)$. Therefore,   
whether a space $X$ admits a $q$-finite $q$-model depends 
only on its $q$-rational homotopy type, in particular, on its 
$q$-homotopy type.   Consequently,  a path-connected  space $X$ 
admits a $1$-finite $1$-model if and only if the fundamental 
group $\pi=\pi_1(X)$ admits one.  The existence of such a model 
puts rather stringent constraints on the group $\pi$.  One such 
constraint is given in \cite[Theorem~1.5]{PS-jlms}.

Following Quillen \cite{Qu}, let us define the Malcev Lie algebra of $\pi$, 
denoted $\m(\pi)$, as the complete, filtered Lie algebra of primitive 
elements in the $I$-adic completion of  the Hopf algebra 
$\Q[\pi]$, where $I=\ker(\varepsilon \colon \Q[\pi] \to \Q)$ 
is the augmentation ideal.   That is to say, 
$\m(\pi)=\operatorname{Prim} (\widehat{\Q[\pi]})$, where 
$\widehat{\Q[\pi]}=\varprojlim_{r} \Q[\pi]/I^r$.

\begin{theorem}[\cite{PS-jlms}]
\label{thm:psobs}
A finitely generated group $\pi$ admits a $1$-finite $1$-model if and 
only if the Malcev Lie algebra $\m(\pi)$ is the lower central series (LCS) 
completion of a finitely presented Lie algebra.
\end{theorem}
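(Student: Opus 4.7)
The plan is to exploit the Quillen--Sullivan correspondence between complete filtered Lie algebras and $1$-minimal Sullivan models over $\Q$. Under this correspondence, $\m(\pi)$ is dual (via a bar/cobar adjunction) to the $1$-minimal model $\mathcal{M}(\pi)$ of $\apl(K(\pi,1))$, and a $\cdga$ $A$ is a $1$-model of $\pi$ precisely when its $1$-minimal model is weakly equivalent to $\mathcal{M}(\pi)$. Both implications will be carried out by transporting finiteness across this duality: a $1$-finite $1$-model of $\pi$ will be converted into a finite presentation of a Lie algebra whose LCS completion is $\m(\pi)$, and such a presentation will be converted back into a $1$-finite $1$-model.

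For the ``only if'' direction, start from a $1$-finite $1$-model $(A,\D)$ of $\pi$ and form the holonomy Lie algebra $\h(A)$: take the free Lie algebra $\L(V)$ on $V=(A^1)^{*}$, which has finitely many generators since $A^1$ is finite-dimensional, and quotient by the Lie ideal generated by the image of the dual map
\begin{equation*}
(\mu+\D^1)^{*}\colon (A^2)^{*} \longrightarrow V \oplus (V\wedge V) \subset \L(V)_{\le 2},
\end{equation*}
where $\mu\colon A^1\wedge A^1\to A^2$ is the multiplication. Even if $A^2$ is infinite dimensional, the image of $\mu+\D^1$ lands in the finite-dimensional subspace $\mu(A^1\wedge A^1)+\D^1(A^1)$; dualizing yields finitely many relations, so $\h(A)$ is finitely presented. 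Identifying the bar construction of $A$ in degrees $\le 2$ with the Chevalley--Eilenberg cochains of $\h(A)$ then gives $\widehat{\h(A)}\cong \m(\pi)$.

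For the converse, suppose $L=\L(x_1,\dots,x_n)/(r_1,\dots,r_m)$ is a finite presentation with $\widehat{L}\cong \m(\pi)$. Build a $\cdga$ $(A,\D)$ dual to this presentation: put $A^0=\Q$, take $A^1=\mathrm{span}(e_1,\dots,e_n)$ dual to the generators, and let $A^2$ be generated over $A^1\wedge A^1$ by finitely many auxiliary symbols $f_1,\dots,f_m$ whose differentials $\D f_j$ encode the relators $r_j$ under duality, extending freely in degrees $\ge 3$. Then $A$ is $1$-finite by construction, the Lie algebra recovered from its degree-$\le 2$ data is exactly $L$, and since $\widehat{L}\cong \m(\pi)$ determines the $1$-minimal model, we get $\mathcal{M}(A)\simeq \mathcal{M}(\pi)$. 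Hence $A$ is a $1$-model of $\pi$.

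The hardest step is the bookkeeping in the ``if'' direction: one must verify that the $\cdga$ built from a finite Lie presentation genuinely gives the right $1$-equivalence with $\apl(K(\pi,1))$, not merely the correct associated Lie algebra in some weaker sense, and that the dualization of relators of arbitrary bracket length can be absorbed into a finite-dimensional $A^2$ without spilling into higher degrees. Both issues are handled by the standard fact that the $1$-minimal model of a $\cdga$ depends only on the degree-$\le 2$ data of $(A,\D)$ and by expressing higher-length relators iteratively in terms of new degree-$2$ generators whose differentials carry the excess bracketing, so only finite pieces of $A$ in degrees $\le 2$ are needed.
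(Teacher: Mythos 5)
The paper does not prove Theorem \ref{thm:psobs}; it imports it verbatim from \cite{PS-jlms}, so there is no internal argument to measure yours against. Your outline does follow the strategy of that reference: pass back and forth between $1$-finite $\cdga$s and finite quadratic-linear presentations of Lie algebras, via the holonomy Lie algebra $\h(A)=\L\bigl((A^1)^*\bigr)/\bigl\langle \im (\mu+\D^1)^{*}\bigr\rangle$ and the dual $\cdga$ of a presentation. The problem is that the one step carrying all the weight --- the filtered isomorphism $\widehat{\h(A)}\cong \m(\pi)$ --- is asserted rather than proved, and the one-line justification offered (``identifying the bar construction of $A$ in degrees $\le 2$ with the Chevalley--Eilenberg cochains of $\h(A)$'') does not work as stated. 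The Malcev Lie algebra $\m(\pi)$ is read off from the $1$-minimal model of $A$, which is built from $H^1(A)$ and the maps $H^2(\mathcal{M}_i)\to H^2(A)$; since $H^2(A)=\ker \D^2/\im \D^1$, this construction sees the differential $\D^2\colon A^2\to A^3$, which your $\h(A)$ does not. To make the comparison legitimate one must first replace $A$ by its degree-$\le 2$ truncation (the projection $A\to A/A^{\ge 3}$ is a $1$-isomorphism, being bijective on $H^1$ and injective on $H^2$) and then cut down to the finite-dimensional sub-$\cdga$ $\k\oplus A^1\oplus(\im \D^1+\im\mu)$. Even after these reductions, showing that the pronilpotent Lie algebra dual to the Sullivan tower of such an $A$ is the \emph{LCS completion} of the inhomogeneously presented $\h(A)$ requires an induction matching the stages $\mathcal{M}_i$ against the nilpotent quotients $\h(A)/\Gamma_{i+1}\h(A)$, together with a lemma identifying $\widehat{\L(V)/\langle R\rangle}$ with the completed free Lie algebra modulo the closed ideal generated by $R$ (completion is not exact). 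That is precisely the technical content of \cite{PS-jlms}, and it is absent here. The same gap reappears in your converse: ``the Lie algebra recovered from its degree-$\le 2$ data is exactly $L$'' is again the unproved identification $\m(A)\cong\widehat{L}$, used in the other direction.

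Two smaller points. Rewriting a finite presentation of $L$ so that all relators have bracket length at most two, by adjoining a new generator for each iterated bracket together with the relation expressing it, is the right way to obtain a finite $A^2$; you should note explicitly that this produces an isomorphic Lie algebra, so $\widehat{L}$ is unchanged. Also, with the definition of ``$1$-finite'' adopted in this paper ($b_1(A)<\infty$), every $1$-model of a finitely generated group is automatically $1$-finite and the theorem would be vacuous; the intended meaning, which you silently and correctly assume when you take $V=(A^1)^*$ to be finite-dimensional, is $\dim_\k A^1<\infty$.
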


The above condition means that $\m(\pi)=\widehat{L}$, for some finitely presented 
Lie algebra $L$, where $\widehat{L}=\varprojlim_{r} L/\Gamma_r L$, with 
the LCS series $\{\Gamma_r L\}_{r\ge 1}$ defined inductively by 
$\Gamma_1 L=L$ and $\Gamma_r L=[L,\Gamma_{r-1} L]$ for $r>1$. 

\subsection{Algebraic models and cohomology jump loci}
\label{subsec:mod-cjl}

Work of Dimca and Papadima \cite{DP-ccm}, 
generalizing previous work from \cite{DPS-duke}, establishes a 
tight connection between the geometry of the characteristic 
varieties of a space and that of resonance varieties of 
a model for it, around the origins of the respective ambient spaces, 
provided certain finiteness conditions hold.

More precisely, let $X$ be a path-connected space with $b_1(X)<\infty$, 
and consider the analytic map $\exp\colon H^1(X,\C)\to H^1(X,\C^*)$ 
induced by the coefficient homomorphism $\C\to \C^*$, $z\mapsto e^z$.
Let $(A,d)$ be a $\cdga$ model for $X$, defined over $\C$.  
Upon identifying $H^1(A)\cong H^1(X,\C)$, we obtain an analytic map 
$H^1(A)\to H^1(X,\C^*)$, which takes $\zero$ to $\one$.   

\begin{theorem}[\cite{DP-ccm}]
\label{thm:thmb}
Let $X$ be a $q$-finite space, and suppose $X$ admits a $q$-finite, 
$q$-model $A$, for some $q\ge 1$. Then, the aforementioned map,  
$H^1(A)\to H^1(X,\C^*)$, induces a local analytic isomorphism 
$H^1(A)_{\zero} \to H^1(X,\C^*)_{\one}$, 
which identifies the germ at $\zero$ of $\RR^i_k(A)$ 
with the germ at $\one$ of $\VV^i_k(X)$, for all $i\le q$ and all $k\ge 0$. 
\end{theorem}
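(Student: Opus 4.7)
The plan is to reinterpret both sides as formal deformations of cochain complexes near their basepoints and then use the model $A$ to match them.

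On the algebraic side, the resonance varieties $\RR^i_k(A)$ for $i\le q$ are, by construction, the zero loci of Fitting ideals of the cohomology of the universal Aomoto complex $(A\otimes S,\delta)$ from \eqref{eq:univ aomoto}; completing $S=\Sym(H_1(A))$ at $\m_{\zero}$ yields a complex of free $\widehat{S}$-modules whose specialization at each $a\in H^1(A)$ near $\zero$ is $(A,\delta_a)$, and whose Fitting ideals of cohomology determine the germ of $\RR^i_k(A)$ at $\zero$. On the topological side, the equivariant chain complex $C_*(X^{\ab},\C)$ from \eqref{eq:equiv cc} is a complex of free $R$-modules through degree $q$, where $R=\C[\pi_{\ab}]$; completing $R$ at the augmentation ideal corresponding to $\one$ produces a complex of free $\widehat{R}$-modules whose twisted homology at $\rho$ near $\one$ is $H_i(X,\C_\rho)$ and whose Fitting ideals determine the germ of $\VV^i_k(X)$ at $\one$. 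The exponential map $\exp\colon H^1(X,\C)\to H^1(X,\C^*)$ is automatically a local biholomorphism at $\zero$ and induces a ring isomorphism $\exp^*\colon \widehat{R}\isom \widehat{S}$ sending each $g\in\pi_{\ab}$ to $e^{\log g}$.

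The heart of the argument is to show that, under this identification, the completed equivariant cochain complex of $X$ is $q$-quasi-isomorphic to the completed universal Aomoto complex of $A$ as a complex of free $\widehat{S}$-modules. This is where the model hypothesis is used: I would replace $A$ by a $q$-minimal Sullivan model $\mathcal{M}$ (available by $q$-finiteness) and, following the strategy of \cite{DP-ccm}, realize both complexes as specializations of a single universal construction attached to $\mathcal{M}$, in which the formal parameter $a\in H^1(\mathcal{M})$ on the algebraic side corresponds under Sullivan realization to the character $\exp(a)\in \Char(\pi)$ on the topological side. Once this chain-level comparison is in place, the identification of germs $\RR^i_k(A)_{(\zero)}=\exp^{-1}(\VV^i_k(X))_{(\one)}$ for all $i\le q$ and $k\ge 0$ is a formal consequence of the fact that $q$-quasi-isomorphic complexes of free modules share the same Fitting ideals of cohomology in the relevant degrees.

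The main obstacle is precisely this chain-level model comparison. It requires tracking the Sullivan $A_{\PL}$ functor, the bar construction, and the maximal abelian cover simultaneously, and ensuring that the universal twisted differential on $(A\otimes\widehat{S},\delta)$ is intertwined with the equivariant differential on the cellular cochain complex of $X^{\ab}$ completed at $\one$. Alternatively, one can route this step through the Malcev Lie algebra $\m(\pi)$: in view of Theorem \ref{thm:psobs}, a $q$-finite $q$-model determines $\m(\pi)$ through degree $q$, and the exponential identification of $\Spec\widehat{R}$ with the formal completion of $H^1(A)$ at $\zero$ then follows. The $q$-finiteness hypothesis on both $X$ and $A$ is used throughout to ensure that Fitting ideals are defined and that a $q$-quasi-isomorphism suffices to control cohomology in degrees up to $q$.
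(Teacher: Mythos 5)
First, a point of comparison: the paper does not prove this statement at all --- Theorem \ref{thm:thmb} is imported verbatim from Dimca--Papadima \cite{DP-ccm}, so there is no internal proof to measure your attempt against. Your outline does correctly reproduce the architecture of the cited proof: pass to completions of the two base rings ($S=\Sym(H_1(A))$ at $\m_{\zero}$ and $R=\C[\pi_{\ab}]$ at the augmentation ideal), use the exponential to identify $\widehat{R}$ with $\widehat{S}$, compare the completed equivariant chain complex of $X^{\ab}$ with the completed universal Aomoto complex of $A$, and deduce the equality of germs of jump loci from invariance of jump ideals under quasi-isomorphism.

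However, the proposal has a genuine gap exactly where you flag ``the main obstacle'': the chain-level statement that the completed equivariant cochain complex of $X$ is $q$-quasi-isomorphic, over $\widehat{S}$, to the completed universal Aomoto complex of a $q$-minimal model. You do not prove this; you say you would ``follow the strategy of \cite{DP-ccm}'' to realize both as specializations of a universal construction, which is circular --- that comparison \emph{is} the theorem, and it occupies the bulk of the Dimca--Papadima paper (it goes through Sullivan realization, a filtered comparison of the Gauss--Manin type connection on the minimal model with the monodromy of rank-one local systems, and a careful treatment of the boundary degree $i=q$, where $A^{q+1}$ and the $(q+1)$-skeleton of $X$ need not be finite). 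Two further points are asserted too quickly. First, ``$q$-quasi-isomorphic complexes of free modules share the same Fitting ideals of cohomology'' is not the right invariance statement: the jump loci are cut out by minors of the differentials (the rank conditions \eqref{eq:rkdel} and \eqref{eq:rho rank}), since specialization does not commute with cohomology; the correct assertion is that the resulting \emph{cohomology jump ideals} are invariant under quasi-isomorphism of bounded complexes of free modules, which is a nontrivial lemma of \cite{DP-ccm} and \cite{BW15}, not a formality. Second, the alternative route through $\m(\pi)$ and Theorem \ref{thm:psobs} only controls degree $1$ data and cannot by itself yield the identification of germs for all $i\le q$. As it stands, the proposal is a correct plan with the decisive step left unexecuted.
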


Recent work of Budur and Wang \cite{BW17} builds on this theorem, 
providing a structural result on the geometry of the characteristic varieties 
of spaces satisfying the hypothesis of the above theorem.   
Putting together Theorem \ref{thm:thmb} and Corollary \ref{cor:ax-bis} 
yields their result, in the slightly stronger form given in \cite{PS-jlms}.

\begin{theorem}[\cite{BW17}]
\label{thm:bw}
Suppose $X$ is a $q$-finite space which admits a $q$-finite $q$-model. 
Then all the irreducible components of 
$\VV^i_k(X)$ passing through $\one$ are algebraic subtori  of 
$H^1(X,\C^*)$, for all $i\le q$ and $k\ge 0$.
\end{theorem}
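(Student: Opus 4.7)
The plan is to reduce the statement to Corollary \ref{cor:ax-bis} via the germ identification furnished by Theorem \ref{thm:thmb}. Fix $i\le q$ and $k\ge 0$, and let $W$ be an irreducible component of $\VV^i_k(X)$ passing through $\one$. By Theorem \ref{thm:thmb}, the analytic map $\exp\colon H^1(A)\to H^1(X,\C^*)$ induced by a $q$-finite $q$-model $A$ of $X$ sends $\zero$ to $\one$ and induces a local analytic isomorphism of germs
\begin{equation*}
\bigl(\RR^i_k(A)\bigr)_{(\zero)} \;\isom\; \bigl(\VV^i_k(X)\bigr)_{(\one)}.
\end{equation*}

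The next step is to match irreducible components through this germ isomorphism. Since a local analytic isomorphism preserves the decomposition of a germ into analytically irreducible components, and each algebraic irreducible component of an algebraic set passing through a point is a union of analytic branches at that point, one picks an irreducible component $V$ of $\RR^i_k(A)$ (which is an algebraic subset of $H^1(A)$ by the results of \S\ref{sect:res}, because $A$ is $q$-finite) such that $\exp$ restricts to a local analytic isomorphism $V_{(\zero)} \cong W_{(\one)}$. Both $V$ and $W$ are irreducible algebraic subvarieties (of $H^1(A)\cong \C^n$ and $H^1(X,\C^*)\cong (\C^*)^n$ respectively, after fixing a basis), and both contain the relevant base points.

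With the hypotheses of Corollary \ref{cor:ax-bis} now verified, that corollary yields directly that $W$ is an algebraic subtorus of $H^1(X,\C^*)$ (and, as a bonus, that $V$ is a rationally defined linear subspace of $H^1(A)$). Since $W$ was an arbitrary irreducible component of $\VV^i_k(X)$ through $\one$, this proves the theorem.

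The step I expect to be the main technical obstacle is the second one: carefully verifying that the local analytic isomorphism of germs produced by Theorem \ref{thm:thmb} genuinely pairs an algebraic irreducible component of $\RR^i_k(A)$ through $\zero$ with an algebraic irreducible component of $\VV^i_k(X)$ through $\one$, rather than merely matching analytic branches. This requires the standard fact, invoked just before Corollary \ref{cor:ax-bis}, that two irreducible algebraic subvarieties of $(\C^*)^n$ containing $\one$ with locally analytically isomorphic germs at $\one$ must be equal (and the analogous fact on the $\C^n$ side); together with the finiteness of the component decomposition, this lets one pass cleanly from the germ-level statement to the component-level one.
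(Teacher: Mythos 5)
Your proposal is correct and follows exactly the route the paper takes: the paper derives Theorem~\ref{thm:bw} in one line by ``putting together Theorem~\ref{thm:thmb} and Corollary~\ref{cor:ax-bis},'' which is precisely your reduction. You actually supply more detail than the paper does on the one delicate point --- pairing an algebraic irreducible component of $\VV^i_k(X)$ through $\one$ with one of $\RR^i_k(A)$ through $\zero$ via the germ isomorphism --- and your appeal to the dimension-argument fact stated just before Corollary~\ref{cor:ax-bis} is the intended way to handle it.
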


As an immediate corollary of the previous two theorems, we 
obtain the following ``Tangent Cone formula."

\begin{theorem}
\label{thm:tcone-fm}
Suppose $X$ is a $q$-finite space which admits a $q$-finite $q$-model $A$.  
Then, for all $i\le q$ and 
$k\ge 0$,
\begin{equation}
\label{eq:tc-fm}
\tau_{\one}(\VV^i_k(X))=\TC_{\one}(\VV^i_k(X))=\RR^i_k(A).
\end{equation}
\end{theorem}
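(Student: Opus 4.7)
The plan is to deduce all three equalities directly from Theorem \ref{thm:thmb} (the germ identification of Dimca--Papadima), Theorem \ref{thm:bw} (the subtorus structure of Budur--Wang), and Corollary \ref{cor:ax-bis} (the Exponential Ax--Lindemann consequence). First, I would invoke Theorem \ref{thm:bw} to write the irreducible components of $\VV^i_k(X)$ passing through $\one$ as algebraic subtori $T_1,\dots,T_r\subseteq H^1(X,\C^*)$. Theorem \ref{thm:thmb} then says that the analytic exponential map $\exp\colon H^1(A)\to H^1(X,\C^*)$ induces a local analytic isomorphism of germs $\RR^i_k(A)_{\zero}\isom \VV^i_k(X)_{\one}$, so the germs at $\zero$ of the irreducible components of $\RR^i_k(A)$ through $\zero$ correspond bijectively to the germs $(T_j)_{\one}$.

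Next, I would apply Corollary \ref{cor:ax-bis} to each such pair of corresponding germs. Since $\exp$ identifies an irreducible germ $(V_j)_{\zero}$ of $\RR^i_k(A)$ with the irreducible germ $(T_j)_{\one}$, the corollary forces $V_j$ to be a rationally defined linear subspace $L_j\subseteq H^1(A)$. Because each $T_j$ is an algebraic subtorus, the discussion in \S\ref{subsec:exp tc} gives $\tau_{\one}(T_j)=\TC_{\one}(T_j)=T_{\one}(T_j)=L_j$. As $\tau_{\one}$ and $\TC_{\one}$ commute with finite unions and register only those components of $\VV^i_k(X)$ passing through $\one$, summing over $j$ yields
\begin{equation*}
\tau_{\one}(\VV^i_k(X))\,=\,\TC_{\one}(\VV^i_k(X))\,=\,\bigcup_{j=1}^r L_j.
\end{equation*}

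It remains to verify the final equality $\bigcup_{j=1}^r L_j=\RR^i_k(A)$. One inclusion is immediate from the previous paragraph: each $L_j$ is an irreducible component of $\RR^i_k(A)$ through $\zero$, hence $\bigcup_{j} L_j\subseteq\RR^i_k(A)$. The main obstacle is the reverse inclusion, which requires ruling out irreducible components of $\RR^i_k(A)$ that do not pass through $\zero$, so that the germ-level identification in Theorem \ref{thm:thmb} promotes to a global equality of algebraic varieties. I would handle this by upgrading the Ax--Lindemann argument underlying Corollary \ref{cor:ax-bis} from the level of germs to the full analytic exponential map on $H^1(A)$, combined with the fact that $\VV^i_k(X)$ has only finitely many irreducible components: any hypothetical component of $\RR^i_k(A)$ disjoint from $\zero$ would be mapped by $\exp$ into $\VV^i_k(X)$, and its global image would have to assemble, via translates, into pieces compatible with the $T_j$ produced above, contradicting either the finiteness of the decomposition of $\VV^i_k(X)$ or the subtorus conclusion of Theorem \ref{thm:bw} at a suitable basepoint. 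Once this global structural step is established, the three-way equality of the theorem follows.
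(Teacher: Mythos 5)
Your first two paragraphs follow exactly the route the paper intends: Theorem \ref{thm:bw} gives the subtori $T_j$, Theorem \ref{thm:thmb} transports their germs at $\one$ to germs of components of $\RR^i_k(A)$ at $\zero$, Corollary \ref{cor:ax-bis} (together with the dimension remark preceding it) forces those components to be rationally defined linear subspaces $L_j$ with $\exp(L_j)=T_j$, and since $\tau_{\one}$ and $\TC_{\one}$ depend only on the germ at $\one$ and agree on each subtorus, both equal $\bigcup_j L_j$. The paper records the theorem as an ``immediate corollary'' of Theorems \ref{thm:thmb} and \ref{thm:bw} and supplies no further detail, so up to that point you have reconstructed the intended argument, including the inclusion $\bigcup_j L_j\subseteq\RR^i_k(A)$.

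The final step is where your argument does not work. You are right that the remaining issue is whether $\RR^i_k(A)$ can have irreducible components missing $\zero$ (recall from \S\ref{subsec:res} that resonance varieties of a $\cdga$ with nonzero differential need not be homogeneous, so this is a genuine concern). But your proposed resolution is unsupported: the assertion that a component of $\RR^i_k(A)$ disjoint from $\zero$ ``would be mapped by $\exp$ into $\VV^i_k(X)$'' does not follow from anything available. Theorem \ref{thm:thmb} identifies only the germs at $\zero$ and $\one$; it gives no information about $\exp$ on points of $\RR^i_k(A)$ outside a small neighborhood of $\zero$, and Theorem \ref{thm:bw} concerns $\VV^i_k(X)$, not $\RR^i_k(A)$. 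The subsequent ``assembly via translates'' step is a gesture, not an argument. In fact, no combination of Theorems \ref{thm:thmb}, \ref{thm:bw} and Corollary \ref{cor:ax-bis} controls $\RR^i_k(A)$ away from the origin; what these results actually yield is the equality of $\tau_{\one}(\VV^i_k(X))=\TC_{\one}(\VV^i_k(X))$ with the union of the irreducible components of $\RR^i_k(A)$ passing through $\zero$ --- equivalently, with the germ of $\RR^i_k(A)$ at $\zero$, or with $\TC_{\zero}(\RR^i_k(A))$. Either the displayed equality must be read with that interpretation, or a genuinely global statement about $\RR^i_k(A)$ must be invoked from the cited sources; your sketch supplies neither, so the reverse inclusion $\RR^i_k(A)\subseteq\TC_{\one}(\VV^i_k(X))$ remains unproved as written.
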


This theorem, together with Theorem \ref{thm:psobs}, yields the 
following corollary.

\begin{corollary}
\label{cor:malcone}
Suppose $\pi$ is a finitely generated group whose Malcev Lie algebra   
is the LCS completion of a finitely presented Lie algebra. Then 
$\tau_{\one}(\VV^1_k(\pi))=\TC_{\one}(\VV^1_k(\pi))$, for all $k\ge 0$.
\end{corollary}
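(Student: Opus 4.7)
The plan is to combine Theorems \ref{thm:psobs} and \ref{thm:tcone-fm} through a suitably chosen auxiliary space. First, since $\pi$ is finitely generated and $\m(\pi)$ is the LCS completion of a finitely presented Lie algebra, Theorem \ref{thm:psobs} furnishes a $1$-finite $1$-model $A$ for $\pi$. Next, I would realize $\pi$ as the fundamental group of a $1$-finite CW-complex $X$: start with a wedge of circles indexed by a finite generating set of $\pi$ and attach $2$-cells to impose the relations (possibly infinitely many of them, since $\pi$ need not be finitely presented). The $1$-skeleton is finite, so $X$ is $1$-finite, and because $1$-models depend only on $\pi_1$, the \cdga{} $A$ is equally a $1$-finite $1$-model for the space $X$.

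With both hypotheses of Theorem \ref{thm:tcone-fm} in place, applying it with $q=1$ and $i=1$ yields
\[
\tau_{\one}(\VV^1_k(X))=\TC_{\one}(\VV^1_k(X))=\RR^1_k(A)
\]
for all $k \ge 0$. Since the degree-$1$ characteristic varieties of $X$ coincide with those of its fundamental group, i.e., $\VV^1_k(X)=\VV^1_k(\pi)$ (as recorded following \eqref{eq:cvx}), reading off the first equality gives the desired conclusion $\tau_{\one}(\VV^1_k(\pi))=\TC_{\one}(\VV^1_k(\pi))$.

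Because the argument is in essence a concatenation of previously established results, I do not anticipate any genuine combinatorial or analytic obstacle. The one conceptual point worth flagging is that the $1$-model of the group $\pi$ produced by Theorem \ref{thm:psobs} and the $1$-model of the auxiliary space $X$ required by Theorem \ref{thm:tcone-fm} must be the same object. This is guaranteed by Sullivan's $1$-minimal model theory, since both are $1$-equivalent to $\apl(K(\pi,1))\otimes_{\Q}\C$, and hence to $\apl(X)\otimes_{\Q}\C$ because $\pi_1(X)=\pi$. Once this identification is made, no further computation is needed.
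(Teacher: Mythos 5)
Your argument is correct and is essentially the paper's own proof: the corollary is stated there as an immediate consequence of Theorem \ref{thm:psobs} (which converts the Malcev Lie algebra hypothesis into the existence of a $1$-finite $1$-model) and Theorem \ref{thm:tcone-fm} applied with $q=1$ to a $1$-finite space realizing $\pi$ as its fundamental group. The details you supply --- that a presentation complex with finite $1$-skeleton is $1$-finite even with infinitely many $2$-cells, and that $1$-models of $X$ and of $\pi$ coincide because the map $X\to K(\pi,1)$ is a $1$-rational homotopy equivalence --- are exactly the points the paper leaves implicit.
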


In other words, if the first half of the Tangent Cone formula fails in degree $1$, 
i.e., if $\tau_{\one}(\VV^1_k(\pi))\subsetneqq \TC_{\one}(\VV^1_k(\pi))$ for 
some $k> 0$, then $\m(\pi)\not\cong \widehat{L}$, for any finitely presented 
Lie algebra $L$.  This will happen automatically if the variety 
$\TC_{\one}(\VV^1_k(\pi))$ has an irreducible component which is not 
a rationally defined linear subspace of $H^1(\pi,\C)$.

\subsection{Formality}
\label{subsec:formal}

A path-connected space $X$ is said to be {\em formal}\/ (over a field $\k$ 
of characteristic $0$) 
if Sullivan's algebra $A_{\PL}(X)\otimes_{\Q} \k$ is formal; in other words, 
if the cohomology algebra, $H^{\hdot}(X,\k)$, endowed with the zero 
differential, is weakly equivalent to $A_{\PL}(X)\otimes_{\Q} \k$. Likewise, 
a space $X$ is merely {\em $q$-formal}\/  (for some $q\ge 1$) if 
$A_{\PL}(X)\otimes_{\Q} \k$ has this property.  These formality and partial 
formality notions are independent of the field $\k$, as long as its 
characteristic is $0$. Furthermore, if $X$ is a $q$-formal CW-complex 
of dimension at most $q+1$, then $X$ is formal, cf.~\cite{Mc10}.

Evidently, every $q$-finite, $q$-formal space $X$ admits a $q$-finite $q$-model, 
namely, $A=H^{\hdot}(X,\k)$ with $d=0$. 
Examples of formal spaces include suspensions, rational cohomology 
tori, surfaces, compact connected Lie groups, as well 
as their classifying spaces.   On the other hand, the 
only nilmanifolds which are formal are tori. Formality 
is preserved under wedges and products of spaces, 
and connected sums of manifolds.  

It is readily seen that the $1$-formality property of a space 
$X$ depends only on its fundamental group, $\pi=\pi_1(X)$.  
Alternatively, a finitely generated group $\pi$ is $1$-formal 
if and only if its Malcev Lie algebra $\m(\pi)$ is isomorphic 
to the LCS completion of a finitely generated, quadratic 
Lie algebra  $L$.  Examples of $1$-formal groups include free 
groups and free abelian groups of finite rank, surface groups, 
and groups with first Betti number equal to $0$ or $1$.  
The $1$-formality property is preserved under finite free 
products and direct products of (finitely generated) groups.  
We refer to \cite{Mc10, PS-formal, PS-jlms, SW} for more 
details and references regarding all these notions.

\subsection{Formality and cohomology jump loci}
\label{subsec:formal-cjl}

The main connection between the formality property 
of a space and the geometry of its cohomology jump 
loci is provided by the next result. This result, which 
was first proved in degree $i=1$ in \cite{DPS-duke}, 
and in arbitrary degree in \cite{DP-ccm}, is now an 
immediate consequence of Theorem~\ref{thm:tcone-fm}.
 
\begin{corollary}
\label{cor:tcone}
Let $X$ be a $q$-finite, $q$-formal space. Then, for all $i\le q$ and 
$k\ge 0$,
\begin{equation}
\label{eq:tc}
\tau_{\one}(\VV^i_k(X))=\TC_{\one}(\VV^i_k(X))=\RR^i_k(X).
\end{equation}
\end{corollary}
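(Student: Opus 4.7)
The plan is to exhibit a specific $q$-finite $q$-model for $X$ whose resonance varieties literally coincide with $\RR^i_k(X)$, and then let Theorem~\ref{thm:tcone-fm} do the rest of the work. The statement is advertised as an ``immediate consequence'' of that theorem, so the only substantive job is to translate the formality hypothesis into the existence of such a model.

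Concretely, I would set $A = (H^{\hdot}(X,\C), \D=0)$, viewed as a $\cdga$ with trivial differential. Since $X$ is $q$-finite, $H^i(X,\C)$ is finite-dimensional for every $i \le q$, so $A$ is a connected, $q$-finite $\cga$. The $q$-formality of $X$ means, by definition (see \S\ref{subsec:formal}), that $A_{\PL}(X)\otimes_{\Q}\C$ is $q$-equivalent to its cohomology algebra with zero differential, i.e.\ to $A$. Hence $A$ is a $q$-finite $q$-model for $X$ in the sense of \S\ref{subsec:algmod}, and all the hypotheses of Theorem~\ref{thm:tcone-fm} are met.

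Applying Theorem~\ref{thm:tcone-fm} to this $A$ yields, for all $i\le q$ and $k\ge 0$,
\begin{equation*}
\tau_{\one}(\VV^i_k(X)) = \TC_{\one}(\VV^i_k(X)) = \RR^i_k(A).
\end{equation*}
It remains only to match $\RR^i_k(A)$ with $\RR^i_k(X)$. By the very definition of $\RR^i_k(X)$ given in \S\ref{subsec:tcone thm}, one has $\RR^i_k(X)=\RR^i_k(H^{\hdot}(X,\C))$, and when $\D=0$ the Aomoto-type complex $(A,\delta_a)$ from \eqref{eq:aomoto} reduces to the left-multiplication complex used in the definition of resonance of a $\cga$. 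Hence $\RR^i_k(A)=\RR^i_k(X)$, and the desired chain of equalities follows.

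There is essentially no obstacle: once one observes that $q$-formality together with $q$-finiteness provides the tautological $q$-model $(H^{\hdot}(X,\C),0)$, the corollary reduces to an unpacking of definitions combined with a direct invocation of Theorem~\ref{thm:tcone-fm}. The only mild point deserving mention is the verification that the resonance variety of the cohomology $\cdga$ with zero differential agrees with the resonance variety of $X$ as originally defined, which is built into the parenthetical remark of \S\ref{intro:cjl}.
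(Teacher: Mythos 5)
Your proposal is correct and follows exactly the paper's route: the paper notes in \S\ref{subsec:formal} that a $q$-finite, $q$-formal space tautologically admits the $q$-finite $q$-model $A=(H^{\hdot}(X,\k),\D=0)$, and then deduces the corollary as an immediate consequence of Theorem~\ref{thm:tcone-fm}, with $\RR^i_k(A)=\RR^i_k(X)$ by definition. Nothing further is needed.
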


In particular, if $\pi$ is a finitely generated, $1$-formal group, then, for all $k\ge 0$, 
\begin{equation}
\label{eq:tc-pi}
\tau_{\one}(\VV^1_k(\pi))= 
\TC_{\one}(\VV^1_k(\pi))=\RR^1_k(\pi).
\end{equation}

As an application of Corollary \ref{cor:tcone}, we have the following characterization 
of the irreducible components of the cohomology jump loci in the formal setting.

\begin{corollary}
\label{cor:rational}
Suppose $X$ is a $q$-finite, $q$-formal space.  Then, for 
all $i\le q$ and $k\ge 0$, the following hold.
\begin{enumerate}
\item \label{tc1}
All irreducible components of the resonance varieties  
$\RR^i_k(X)$ are rationally defined linear subspaces 
of $H^1(X,\C)$. 
\item \label{tc2}
All irreducible components of the characteristic varieties  
$\VV^i_k(X)$ which contain the origin are algebraic 
subtori of $\Char(X)^{0}$, of the form $\exp(L)$, where 
$L$ runs through the linear subspaces comprising $\RR^i_k(X)$.
\end{enumerate}
\end{corollary}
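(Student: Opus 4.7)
The strategy is to derive both statements directly from the Tangent Cone formula of Corollary \ref{cor:tcone} together with the subtorus structure theorem, Theorem \ref{thm:bw}. A $q$-formal, $q$-finite space $X$ certainly admits a $q$-finite $q$-model, namely $(H^{\hdot}(X,\C),0)$, so both inputs are available throughout.

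For part \eqref{tc1}, I would rewrite $\RR^i_k(X) = \tau_{\one}(\VV^i_k(X))$ using Corollary \ref{cor:tcone}. The general structural property of exponential tangent cones recalled in \S\ref{subsec:exp tc}---namely that $\tau_{\one}(W)$ is a finite union of rationally defined linear subspaces of $\C^n$ for any algebraic $W \subseteq (\C^*)^n$---then automatically expresses $\RR^i_k(X)$ as such a finite union. Since each summand is linear, hence irreducible, the maximal ones among them are exactly the irreducible components of $\RR^i_k(X)$, and they are rationally defined by construction.

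For part \eqref{tc2}, Theorem \ref{thm:bw} guarantees that every irreducible component of $\VV^i_k(X)$ passing through $\one$ is an algebraic subtorus $T_\alpha \subseteq \Char(X)^0$, hence of the form $T_\alpha = \exp(L_\alpha)$ with $L_\alpha := T_{\one}(T_\alpha) = \TC_{\one}(T_\alpha)$ a rationally defined linear subspace of $H^1(X,\C)$. Since the tangent cone at $\one$ commutes with finite unions and only sees components through $\one$, Corollary \ref{cor:tcone} then yields
\[
\RR^i_k(X) \;=\; \TC_{\one}(\VV^i_k(X)) \;=\; \bigcup_{\alpha} \TC_{\one}(T_\alpha) \;=\; \bigcup_{\alpha} L_\alpha .
\]

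To finish, I need the $L_\alpha$'s to be exactly the irreducible components of $\RR^i_k(X)$: each $L_\alpha$ is irreducible, so by part \eqref{tc1} it is contained in some component $L$; conversely, any such $L$ is covered by the $L_\alpha$, and irreducibility forces $L = L_\alpha$ for a unique $\alpha$. The main delicacy is ruling out strict inclusions $L_\alpha \subsetneq L_\beta$ among these tangent spaces, since a priori the map $T_\alpha \mapsto L_\alpha$ could collapse distinct components. This is resolved by the germ identification of Theorem \ref{thm:thmb}: an inclusion $L_\alpha \subsetneq L_\beta$ would give an inclusion of germs $(T_\alpha)_{\one} \subseteq (T_\beta)_{\one}$ under the local analytic isomorphism $H^1(X,\C)_{\zero} \cong H^1(X,\C^*)_{\one}$, and analytic irreducibility together with global irreducibility of the $T_\alpha$ would then force $T_\alpha \subseteq T_\beta$ globally, contradicting the maximality of $T_\alpha$ as a component of $\VV^i_k(X)$.
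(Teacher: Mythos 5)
Your proposal is correct and follows the same route the paper intends: the statement is presented there as an immediate application of Corollary \ref{cor:tcone}, with part \eqref{tc1} coming from the fact that $\tau_{\one}$ of any subvariety of the character torus is a finite union of rationally defined linear subspaces, and part \eqref{tc2} from the subtorus structure of Theorem \ref{thm:bw}. Your extra care in matching the tangent spaces $L_\alpha$ bijectively with the components of $\RR^i_k(X)$ is a sound (if slightly more elaborate than necessary) filling-in of a step the paper leaves implicit, since a subtorus is already determined by its tangent space at $\one$ via $T_\alpha=\exp(L_\alpha)$.
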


\section{Resonance varieties of $3$-manifolds}
\label{sect:res man}

We now switch our focus from the general theory of cohomology 
jump loci to some of the applications of this theory in low-dimensional 
topology. We start by describing the resonance varieties attached 
to the cohomology ring of a closed, orientable, $3$-dimensional 
manifold, based on the approach from \cite{Su-poinres}.

\subsection{The intersection form of a $3$-manifold}
\label{subsec:coho 3-mfd}

Let $M$ be a compact, connected $3$-mani\-fold without boundary. 
For short, we shall refer to $M$ as being a {\em closed}\/ $3$-manifold.
Throughout, we will also assume that $M$ is orientable.   

Fix an orientation class $[M]\in H_3(M,\Z)\cong\Z$. With 
this choice, the cup product on $M$ determines an alternating 
$3$-form $\mu_M$ on $H^1(M,\Z)$, given by 
\begin{equation}
\label{eq:eta}
\mu_M(a\wedge b\wedge c) = \langle a\cup b\cup c , [M]\rangle,
\end{equation} 
where $\langle \cdot, \cdot \rangle$ denotes the Kronecker pairing. 
In turn, the cup-product map $\bigwedge^2 H^1(M,\Z) 
\to H^2(M,\Z)$ is determined by the intersection form $\mu_M$ 
via $\langle a\cup b , \gamma \rangle = \mu_M (a\wedge b \wedge c)$, 
where $c$ is the Poincar\'{e} dual of 
$\gamma \in H_2(M,\Z)$.  

Now fix a basis $\{e_1,\dots ,e_n\}$ for $H^1(M,\Z)$, 
and choose  $\{e^{\vee}_1,\dots ,e^{\vee}_n\}$ as basis 
for the torsion-free part of $H^2(M,\Z)$, where $e^{\vee}_i$ 
denotes the Kronecker dual of the Poincar\'e dual of $e_i$. 
Write 
\begin{equation}
\label{eq:muijk}
\mu_M=\sum_{1\le i<j<k\le n} \mu_{ijk} e_i  e_j  e_k,
\end{equation}
where $\mu_{ijk}=\mu(e_i\wedge e_j\wedge e_k )$.  Using 
formula \eqref{eq:eta}, we find that 
$e_i  e_j=\sum_{k=1}^{n} \mu_{ijk}  e^{\vee}_k$.  

As shown by Sullivan \cite{Su75}, for every finitely generated, 
torsion-free abelian group $H$ and every $3$-form $\mu\in \bwedge^3 H^*$, 
there is a closed, oriented $3$-manifold $M$ with $H^1(M,\Z)=H$ 
and cup-product form $\mu_M=\mu$. Such a $3$-manifold can 
be constructed by a process known as ``Borromean surgery" 
or ``$T^3$-surgery", see for instance 
\cite[Corollary~3.5]{CGO} or \cite[Theorem~6.1]{My11}. 
More precisely, if $n=\rank H$, such a manifold $M$ may be defined as 
$0$-framed surgery on a link in $S^3$ obtained from the trivial 
$n$-component link by replacing a collection of trivial $3$-string braids by 
a collection of $3$-string braid whose closure is the Borromean rings.

Of course, there are many closed $3$-manifolds that realize a given intersection 
$3$-form $\mu$. For instance, if $M$ is such a manifold, then the connected sum 
of $M$ with any rational homology $3$-sphere will also realize $\mu$.  
As another example, if $M$ is the link in $S^5$ of an isolated singularity 
of a complex algebraic surface, then $\mu_M=0$ 
\cite{Su75}; more generally, if $M$ bounds a compact, orientable $4$-manifold 
$W$ such that the cup-product pairing on  $H^2(W,M)$ is non-degenerate, 
then $\mu_M=0$, see \cite[Proposition~13]{Ma08}.

\begin{remark}
\label{rem:homcob}
Two closed, oriented $3$-manifolds, $M_1$ and $M_2$, are said to be 
{\em homology cobordant}\/ if there is a compact oriented $4$-manifold $W$ 
such that  $\partial W=M_1\sqcup - M_2$ and the inclusion-induced maps 
$H_n(M_i,\Z)\to H_n(W,\Z)$ are isomorphisms for $i=1,2$ and all $n$. It 
is readily seen that homology cobordism is an equivalence relation.  Moreover, 
if $M_1$ and $M_2$ are homology cobordant, then their cohomology rings 
are isomorphic. 
\end{remark}

\subsection{Resonance varieties of $3$-manifolds}
\label{subsec:res3d}

Let $A$ be a graded, graded-commutative algebra over 
a field $\k$ such that $A$ is connected and all the Betti numbers 
$b_i(A)=\dim_{\k} A^i$ are finite. We say that $A$ is a {\em Poincar\'{e} duality 
algebra}\/ of dimension $m$ (for short, a $\PD_m$ algebra) 
if there exists a $\k$-linear map $\varepsilon\colon A^m \to \k$ 
such that all the bilinear forms $A^i \otimes_{\k} A^{m-i}\to \k$, 
$a\otimes b\mapsto \varepsilon (ab)$ are non-singular. 
In this case, $\varepsilon$ is an isomorphism, 
$A^i=0$ for $i>m$, and $b_i(A)=b_{m-i}(A)$. 

Now suppose $M$ is a compact, connected, oriented $m$-dimensional 
manifold; then, by Poincar\'{e} duality, the cohomology algebra 
$H^{\hdot}(M,\k)$ is a $\PD_m$ algebra over $\k$, with 
the homomorphism  $\varepsilon \colon  
H^{m}(M,\k) = H_m(M,\Z)\otimes \k \to \k$ being determined 
by the orientation class $[M]\in H_m(M,\Z)$ by setting 
$\varepsilon([M] \otimes 1)=1$.

We now restrict to the case $m=3$, so that $M$ is a closed, oriented 
$3$-manifold. Then the cohomology algebra $A=H^{\hdot}(M,\C)$ 
is a Poincar\'e duality $\C$-algebra of dimension~$3$.  Two such $\PD_3$ 
algebras are isomorphic if and only if the corresponding $3$-forms are isomorphic, 
see \cite{Su-poinres}. 

Let $S=\Sym(A_1)$ be the symmetric 
algebra on $A_1=H_1(M,\C)$, which we will identify as before with 
the polynomial ring $\C[x_1,\dots,x_n]$.  In our situation, the chain 
complex from \eqref{eq:univ aomoto} has the form 
\begin{equation}
\label{eq:koszul 3mfd}
\xymatrixcolsep{22pt}
\xymatrix{
A^0\otimes_\C S \ar^{\delta^{0}_A}[r] 
& A^1\otimes_\C S \ar^{\delta^{1}_A}[r] 
&A^2\otimes_\C  S\ar^{\delta^{2}_A}[r] 
&A^3\otimes_\C  S\, ,}
\end{equation}
where the $S$-linear differentials are given by 
$\delta^q_A(u)=\sum_{j=1}^{n} e_j u \otimes x_j$ for $u\in A^q$.  
In our chosen basis, the matrix of $\delta^0$ is $\big(x_1 \, \cdots\, x_n\big)$,  
the matrix of $\delta^2_A$ is the transpose of $\delta^0$, while the matrix 
of $\delta^1_A$ is the $n\times n$ skew-symmetric matrix of linear forms in 
the variables of $S$, with entries given by %
\begin{equation}
\label{eq:delta1 3m}
\delta^1_A(e_i)=
\sum_{j=1}^{n} \sum_{k=1}^{n}\mu_{jik} e_k^{\vee} \otimes x_j\, .
\end{equation}

We wish to describe the resonance varieties $\RR^i_k(M)=\RR^i_k(A)$. 
To avoid trivialities, we will assume for the rest of this section that $n\ge 3$, 
since, otherwise $\RR^i_k(M)\subseteq \{\zero\}$.  Furthermore, we may 
assume that $i=1$; indeed, as shown in \cite[Proposition~6.1]{Su-poinres}, 
\begin{equation}
\RR^{2}_k(M)=\RR^1_k(M)\quad \text{for $1\le k \le n$}, 
\end{equation}
while $\RR^i_0(M)=H^1(M,\C)$, $\RR^{3}_1(M)=\RR^0_1(M)=\{\zero\}$, 
and $\RR^i_k(M)=\myempty$, otherwise.   Now, by \eqref{eq:r1ka}, the 
resonance variety $\RR^1_k(M)$ 
is the vanishing locus of the ideal of codimension $k$ minors 
of the matrix $\delta_M:=\delta^1_A$;  that is, 
\begin{equation}
\label{eq:r1theta}
\RR^1_k(M)=V(I_{n-k}(\delta_M)).
\end{equation}

The rank of a $3$-form $\mu\colon \bigwedge^3 U\to \C$  on a finite-dimensional 
$\C$-vector space $U$  is the 
minimum dimension of a subspace $W\subset U$ such that $\mu$ factors
through $\bigwedge^3 W$.

\begin{proposition}[\cite{Su-poinres}]
\label{prop:rvanish}
If $n\ge 3$ and $\mu_M$ has rank $n=b_1(M)$, 
then 
\[
\RR^1_{n-2}(M)=\RR^1_{n-1}(M)=\RR^1_{n}(M)=\{\zero\}.
\]
\end{proposition}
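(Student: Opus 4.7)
The plan is to invoke the description of the resonance varieties in terms of the skew-symmetric matrix $\delta_M$ provided by \eqref{eq:r1theta}, and to exploit the well-known parity constraint on the rank of a skew-symmetric matrix. First observe that $\RR^1_n(M)=\{\zero\}$ is immediate from \eqref{eq:r1ka}, and the descending filtration \eqref{eq:res filt} yields
\[
\RR^1_{n-2}(M)\supseteq \RR^1_{n-1}(M)\supseteq \RR^1_{n}(M)=\{\zero\}.
\]
Since $\zero$ lies in each term of this chain (as $b_1(M)=n\ge n-2$), it suffices to prove the reverse inclusion $\RR^1_{n-2}(M)\subseteq \{\zero\}$.

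To this end, let $a\in\RR^1_{n-2}(M)=V(I_2(\delta_M))$ and consider the evaluation $\delta_M|_a$, an $n\times n$ complex skew-symmetric matrix which, in the chosen bases, represents the multiplication map $u\mapsto a\cdot u$ from $A^1=H^1(M,\C)$ to $A^2=H^2(M,\C)$. By the defining condition on $I_2$ we have $\rank\delta_M|_a\le 1$; since the rank of any skew-symmetric matrix is even, this forces $\delta_M|_a=0$. In other words, $a\cdot u=0$ in $H^2(M,\C)$ for every $u\in H^1(M,\C)$.

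The last step is to translate this cup-product vanishing into a condition on the $3$-form $\mu_M$ via Poincar\'e duality. Since the pairing $H^1(M,\C)\otimes H^2(M,\C)\to H^3(M,\C)\cong \C$ is perfect, the condition $a\cdot u=0$ for all $u\in H^1(M,\C)$ is equivalent to $\mu_M(a\wedge u\wedge v)=0$ for all $u,v\in H^1(M,\C)$, that is, to the contraction $\iota_a\mu_M\in \bwedge^2 H^1(M,\C)^*$ being identically zero. The hypothesis that $\mu_M$ has maximal rank amounts precisely to the injectivity of the contraction map $c\mapsto \iota_c\mu_M$ on $H^1(M,\C)$, and hence forces $a=\zero$.

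The main piece of bookkeeping is making the identification of $\delta_M|_a$ with the cup-product operator transparent from \eqref{eq:delta1 3m}, and fixing the precise meaning of ``maximal rank'' for $\mu_M$ (that $c\mapsto\iota_c\mu_M$ is injective as a map $H^1(M,\C)\to\bwedge^2 H^1(M,\C)^*$), after which the argument becomes formal. The real trick here is the parity argument, which collapses the three separate rank conditions defining $\RR^1_{n-2}$, $\RR^1_{n-1}$, $\RR^1_n$ to the single cleanest one, namely the full vanishing $\delta_M|_a=0$.
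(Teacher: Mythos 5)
Your proof is correct; the paper states this proposition as a quotation from \cite{Su-poinres} without reproducing a proof, and your argument is exactly the natural one suggested by the machinery set up here: reduce to depth $n-2$ via the filtration \eqref{eq:res filt}, use \eqref{eq:r1ka} and the evenness of the rank of the skew-symmetric matrix $\delta_M|_a$ to force $\delta_M|_a=0$, and then use Poincar\'e duality to identify this with the vanishing of $\iota_a\mu_M$, which kills $a$ under the maximal-rank hypothesis. The one item you had to supply is the meaning of ``maximal rank,'' which the paper never defines; your reading (injectivity of the contraction $c\mapsto\iota_c\mu_M$, equivalently triviality of the null space of $\mu_M$) is the standard one and the one intended in \cite{Su-poinres}, so this is not a gap.
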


\subsection{Pfaffians and resonance}
\label{subsec:pfaff}
For a skew-symmetric matrix $\theta$, we shall denote by 
$\Pf_{2r}(\theta)$ the ideal of $2r \times 2r$ Pfaffians of $\theta$.  

\begin{proposition}[\cite{Su-poinres}]
\label{prop:be}
The following hold:
\begin{align} 
\label{eq:res theta}
&\RR^1_{2k}(M) =\RR^1_{2k+1}(M) =V(\Pf_{n-2k}(\delta_M)),&& \text{if $n$ is even},\\ \notag
&\RR^1_{2k-1}(M) =\RR^1_{2k}(M) =V(\Pf_{n-2k+1}(\delta_M)),&& \text{if $n$ is odd}.
\end{align}
\end{proposition}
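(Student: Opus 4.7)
The plan is to derive both equalities from two ingredients: the description \eqref{eq:r1theta} of $\RR^1_k(M)$ as the vanishing locus of the determinantal ideal $I_{n-k}(\delta_M)$, and the classical relationship between determinantal and Pfaffian ideals of a skew-symmetric matrix of linear forms. Throughout, the key observation is that $\delta_M$ is a skew-symmetric matrix; hence, for each specialization at a point $a\in H^1(M,\C)$, the rank $r(a):=\rank\delta_M(a)$ is an even integer between $0$ and $n$.

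First, I would rewrite \eqref{eq:r1theta} in the form
\begin{equation*}
\RR^1_k(M) = \{a \in H^1(M,\C) \mid r(a) < n-k\}.
\end{equation*}
Because $r(a)$ takes only even values, the condition $r(a) < n-k$ is equivalent to $r(a) \le 2\lfloor (n-k-1)/2\rfloor$. In particular, the condition depends only on the parity of $n-k$, which immediately yields the identifications $\RR^1_{2k}(M)=\RR^1_{2k+1}(M)$ when $n$ is even and $\RR^1_{2k-1}(M)=\RR^1_{2k}(M)$ when $n$ is odd, since consecutive values of $k$ produce consecutive parities of $n-k$.

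Next, I would invoke the classical fact that, for an $n\times n$ skew-symmetric matrix $\theta$ of linear forms and any even integer $2r$, the locus $\{a : \rank\theta(a)< 2r\}$ coincides with $V(\Pf_{2r}(\theta))$. Indeed, at a point where all $2r\times 2r$ Pfaffians vanish, every $2r\times 2r$ principal minor (being the square of the corresponding Pfaffian) vanishes; since the non-principal even-sized minors of a skew-symmetric matrix can likewise be expressed via products of Pfaffians of sub-Pfaffian-type, the rank drops below $2r$. Conversely, if $\rank\theta(a)< 2r$, then any $2r\times 2r$ submatrix is singular and skew-symmetric of even size, whence its Pfaffian vanishes. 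Applied to $\theta=\delta_M$ with $2r=n-2k$ in the even case and $2r=n-2k+1-(-1)=n-2k+ \text{(adjusted even)}$ in the odd case, this identifies $\RR^1_{2k}(M)$ with $V(\Pf_{n-2k}(\delta_M))$ and $\RR^1_{2k-1}(M)$ with $V(\Pf_{n-2k+1}(\delta_M))$.

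The main obstacle is really just the bookkeeping of parities and indices: one must check that $n-2k$ (respectively $n-2k+1$) is even in the correct parity regime of $n$ so that the Pfaffian ideal is well defined, and that the floor rounding $r(a)<n-k \Leftrightarrow r(a)\le 2r-2$ matches the size $2r$ of the Pfaffian. Since only the set-theoretic equality of vanishing loci is required (and not an ideal-theoretic identity such as $\sqrt{I_{2r}(\theta)}=\sqrt{\Pf_{2r}(\theta)}$), the argument bypasses the finer commutative-algebra results about the primary structure of Pfaffian ideals, relying only on the evenness of the rank of a skew-symmetric matrix.
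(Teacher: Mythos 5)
The paper itself gives no proof of this proposition, deferring entirely to the reference [Su-poinres], so there is nothing internal to compare against; your argument is the standard (and surely intended) one: rewrite $\RR^1_k(M)=V(I_{n-k}(\delta_M))$ as the rank condition $\rank\delta_M(a)<n-k$, use that the rank of a skew-symmetric matrix is even to see that the threshold is unchanged when passing from $k$ to $k+1$ whenever $n-k$ is even, and then convert the rank condition into the vanishing of the Pfaffian ideal via the fact that the rank of a skew-symmetric matrix equals the largest order of a nonsingular \emph{principal} submatrix. Two small repairs: the inclusion $V(\Pf_{2r})\subseteq\{\rank<2r\}$ needs only that last fact about principal submatrices (your appeal to expressing non-principal minors through Pfaffians is unnecessary and left vague), and in the odd case the Pfaffian size is simply $2r=n-2k+1$, which is already even because $n$ is odd --- the expression $n-2k+1-(-1)$ is a slip and should be deleted.
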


The skew-symmetric matrix $\delta_M$ is singular, 
since the vector $(x_1,\dots, x_n)$ is in its kernel.  Hence, both its 
determinant $\det(\delta_M)$ and its Pfaffian $\pf(\delta_M)$ vanish. 
In \cite[Ch.~III, Lemmas 1.2 and 1.3.1]{Tu}, Turaev shows how 
to remedy this situation, so as to obtain well-defined determinant 
and Pfaffian polynomials for the $3$-form $\mu_M$. Let 
$\delta_M(i;j)$ be the sub-matrix obtained from $\delta_M$ 
by deleting the $i$-th row and $j$-th column. 

\begin{lemma}[\cite{Tu}]
\label{lem:turaev}
Suppose $n\ge 3$.  There is then a polynomial 
$\Det(\mu)\in S$ such that $\det \delta_M(i;j) = (-1)^{i+j}x_ix_j \Det(\mu)$.  
Moreover, if $n$ is even, then $\Det(\mu)=0$, while if $n$ is odd, 
then $\Det(\mu)=\Pf(\mu)^2$, where $\pf (\delta_M(i;i)) = (-1)^{i+1} x_i \Pf(\mu)$. 
\end{lemma}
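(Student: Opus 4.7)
The plan is to analyze the adjugate matrix $\mathrm{adj}(\delta_M)$ using the kernel vector $v = (x_1, \dots, x_n)^T$. Since $\mu$ is alternating, the entries $(\delta_M)_{ki} = \sum_j \mu_{jik} x_j$ satisfy
\[
\sum_i (\delta_M)_{ki} x_i = \sum_{i,j} \mu_{jik} x_i x_j = 0
\]
(antisymmetric indices against a symmetric product), so $\delta_M v = 0$ and, by skew-symmetry, $v^T \delta_M = 0$. In particular $\det \delta_M = 0$, whence $\delta_M \cdot \mathrm{adj}(\delta_M) = 0 = \mathrm{adj}(\delta_M) \cdot \delta_M$: every column and every row of $\mathrm{adj}(\delta_M)$ lies in $\ker \delta_M$. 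When $n$ is even, skew-symmetric matrices have even rank, so the rank deficiency forces $\rank \delta_M \le n-2$; then all $(n-1) \times (n-1)$ minors of $\delta_M$ vanish, and one may take $\Det(\mu) := 0$.

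\smallskip

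Suppose now $n$ is odd. If the generic rank of $\delta_M$ is strictly less than $n-1$, all $(n-1)$-minors again vanish and the statement holds trivially with $\Det(\mu) = \Pf(\mu) = 0$. Otherwise, working over the field of fractions $K$ of $S$, the kernel of $\delta_M$ is one-dimensional and spanned by $v$, so each row and each column of $\mathrm{adj}(\delta_M)$ is a $K$-multiple of $v$. Writing $\mathrm{adj}(\delta_M)_{ij} = \alpha_j x_i = \beta_i x_j$ yields $\alpha_j/x_j = \beta_i/x_i =: \lambda$, and hence $\mathrm{adj}(\delta_M) = \lambda \cdot v v^T$ for a unique $\lambda \in K$. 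Unraveling entries gives
\[
(-1)^{i+j} \det \delta_M(j;i) = \lambda \, x_i x_j,
\]
which, after swapping $i \leftrightarrow j$, is the target determinantal formula with $\Det(\mu) := \lambda$.

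\smallskip

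It remains to identify $\lambda$ as a square in $S$ and to pin down the Pfaffian signs. Since $\delta_M(i;i)$ is skew-symmetric of even size $n-1$, we have $\det \delta_M(i;i) = \pf \delta_M(i;i)^2$; combined with the adjugate formula, this gives $\lambda x_i^2 = \pf \delta_M(i;i)^2$. To see that $x_i$ divides $\pf \delta_M(i;i)$, specialize $x_i = 0$: the identity $\delta_M v = 0$ restricts to show that $(x_1, \dots, \widehat{x}_i, \dots, x_n)^T$ lies in the kernel of $\delta_M(i;i)\big|_{x_i=0}$, forcing its (necessarily even) rank to at most $n-3$ and its Pfaffian to vanish. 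Writing $\pf \delta_M(i;i) = x_i Q_i$ with $Q_i \in S$ gives $\lambda = Q_i^2$, so the $Q_i$ agree up to sign; set $\Pf(\mu) := Q_1$, so that $\Det(\mu) = \Pf(\mu)^2$. The signs in the Pfaffian formula are then determined by the classical ``Pfaffian adjugate'' identity for odd-size skew-symmetric matrices: $\mathrm{adj}(A) = w w^T$ where $w_i = (-1)^{i+1} \pf A(i;i)$. Combined with $\mathrm{adj}(\delta_M) = \lambda v v^T$, this forces $(-1)^{i+1} \pf \delta_M(i;i) = \Pf(\mu) \, x_i$.

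\smallskip

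The main obstacle is the sign analysis, particularly the Pfaffian adjugate identity itself. The cleanest route is to verify directly that the vector $w$ with $w_i = (-1)^{i+1} \pf A(i;i)$ satisfies $A w = 0$ (an application of the Pfaffian row-expansion formula); then $w w^T$ and $\mathrm{adj}(A)$ are both rank-one symmetric matrices whose columns are $A$-annihilated, hence proportional, and the identity $\mathrm{adj}(A)_{ii} = \det A(i;i) = \pf A(i;i)^2 = w_i^2$ pins the proportionality constant to $1$.
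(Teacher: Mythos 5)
Your argument is correct. Note that the paper itself offers no proof of this lemma: it is quoted verbatim from Turaev's book (Ch.~III, Lemmas 1.2 and 1.3.1 of \cite{Tu}), so there is no internal proof to compare against. What you supply is a self-contained linear-algebra derivation, and it hangs together: the kernel computation $\delta_M v=0$ with $v=(x_1,\dots,x_n)^T$ matches what the paper records in \S\ref{subsec:pfaff}; the parity-of-rank argument correctly disposes of the even case; and in the odd case the identities $\delta_M\cdot\mathrm{adj}(\delta_M)=\mathrm{adj}(\delta_M)\cdot\delta_M=0$, together with one-dimensionality of $\ker\delta_M$ over the fraction field, do force $\mathrm{adj}(\delta_M)=\lambda\, vv^T$, which is exactly the determinantal formula. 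The specialization $x_i=0$ to show $x_i\mid \pf\,\delta_M(i;i)$ is the right trick for promoting $\lambda$ from the fraction field to a square in $S$, and combining $\mathrm{adj}(\delta_M)=\lambda vv^T$ with $\mathrm{adj}(\delta_M)=ww^T$, $w_i=(-1)^{i+1}\pf\,\delta_M(i;i)$, does pin down the signs as stated. The one ingredient you lean on without full proof is the Pfaffian adjugate identity itself; your proposed verification (show $Aw=0$ by Pfaffian row expansion, then compare diagonal entries $\mathrm{adj}(A)_{ii}=\pf A(i;i)^2=w_i^2$) is standard and sound, though to make it airtight you should carry it out for the generic skew-symmetric matrix over $\Z[a_{ij}]$ (where the rank is exactly $n-1$) and then specialize, so that the degenerate cases come for free. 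This is essentially the same circle of ideas as Turaev's original proof, so nothing is lost by substituting your argument for the citation.
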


\subsection{The top resonance variety}
\label{subsec:res3-top}

We will need in the sequel the notion of `generic' alternating $3$-form, 
introduced and studied by Berceanu and Papadima in \cite{BP}.  For 
our purposes, it will be enough to consider the case when $n=2g+1$,  
for some $g\ge 1$.  We say that a $3$-form $\mu_A$ is 
{\em generic}\/ if there is an element $c\in A^1$ such that the $2$-form 
$\gamma_c\in A_1\wedge A_1$ defined by 
\begin{equation}
\label{eq:2form}
\gamma_c(a \wedge b)=\mu_A(a\wedge b\wedge c) \quad\text{for $a,b\in A^1$}
\end{equation} 
 has rank $2g$, that is, $\gamma^{g}_c\ne 0$ 
in $\bigwedge^{2g} A_1$.  Equivalently, in a suitable basis for $A^1$, 
we may write
$\mu_A=\sum_{i=1}^g a_i  b_i  c+ 
\sum q_{ijk}\, z_i z_j  z_k$, 
where each $z_i$  belongs to the span of $a_1,b_1,\dots , a_g, b_g$ in $A^1$, 
and the coefficients $q_{ijk}$ are in $\C$.  

\begin{example}
\label{ex:surf}
Let $M=\Sigma_g \times S^1$ be the product of a circle with a 
closed, orientable surface of genus $g\ge 1$.   If $a_1,b_1,\dots , a_g, b_g$ is  
the standard symplectic basis for $H_1(\Sigma_g,\Z)=\Z^{2g}$, and  
$c$ generates $H^1(S^1,\Z)=\Z$, then $\mu_M=\sum_{i=1}^g a_i b_i c$, 
and so $\mu_M$ is generic. A routine 
computation shows that $\Pf(\mu_M)=x_{2g+1}^{g-1}$.  
Furthermore, $\RR^1_k(M)=\{x_{2g+1}=0\}$ for $1\le k\le 2g-2$ and 
$\RR^1_{2g-1}(M)=\{\zero\}$. 
\hfill $\Diamond$
\end{example}

More generally, if $M$ is homology cobordant to $S^1\times \varSigma_g$ 
for some $g\ge 1$, then $\mu_M=\mu_{S^1\times \Sigma_g}$ is generic.  
For $n=3$ and $n=5$, there 
is a single irreducible $3$-form of rank $n$ (up to isomorphism), and that 
form is of the type just discussed.  In rank $n=7$, there are $5$ irreducible 
$3$-forms: two generic, $\mu=e_7(e_1e_4+e_2e_5+e_3e_6)$ and 
$\mu'=\mu+e_4e_5e_6$, and three non-generic. 
  
Using \cite[Theorem~8.6]{Su-poinres}, we obtain the following 
description of the top resonance variety of a closed, orientable $3$-manifold.

\begin{theorem}
\label{thm:res closed3m}
Let $M$ be a closed, orientable $3$-manifold.  Set $n=b_1(M)$ 
and let $\mu_M$ be the associated $3$-form.  Then 
\begin{equation}
\label{eq:r1 3m}
\RR^1_1(M)= 
\begin{cases}
\emptyset & \text{if\/ $n=0$};\\
\{0\} & \text{if\/ $n=1$ or $n=3$ and $\mu_M$ has rank $3$};\\
V(\Pf(\mu_M))  & \text{if\/  $n$ is odd, $n>3$, and $\mu_M$ is generic};\\
H^1(M,\C) & \text{otherwise}.
\end{cases}
\end{equation}
\end{theorem}

\begin{remark}
\label{rem:be}
The case when $b_1(M)$ is even and positive is worth dwelling upon. 
In this case, the equality $\RR^1(M)=H^1(M,\C)$ was first proved in \cite{DS},  
where it was used to show that 
the only $3$-manifold groups which are also K\"{a}hler 
groups are the finite subgroups of ${\rm O}(4)$.  Another application 
of this equality was given in \cite{PS-forum}: if $M$ is a closed, orientable 
$3$-manifold such that $b_1(M)$ is even and $M$ fibers over the circle, 
then $M$ is not $1$-formal.
\hfill $\Diamond$
\end{remark}

\section{Alexander polynomials and characteristic varieties of $3$-manifolds}
\label{sect:alex-3dim}

In this section, we collect some facts regarding the Alexander 
polynomials and the characteristic varieties of closed, orientable, 
$3$-dimensional manifolds.

\subsection{Poincar\'e duality and characteristic varieties}
\label{subsec:pd}

Let $M$ be a smooth, closed, orientable manifold of dimension $m$. 
By Morse theory, $M$ admits a finite cell decomposition;  consequently, 
its fundamental group, $\pi=\pi_1(M)$ admits a finite presentation. 
The involution $g\mapsto g^{-1}$ taking each element of $\pi$ 
to its inverse induces an algebraic automorphism of $\Hom (\pi, \C^*)$, 
taking a character $\rho$ to the character $\bar\rho$ given by 
$\bar\rho(g)=\rho(g^{-1})$. 

\begin{proposition}
\label{prop:cv man}
The above automorphism of $\Hom (\pi, \C^*)$ restricts to isomorphisms  
\[
\VV^{i}_k(M)\cong \VV^{m-i}_k(M),
\] 
for all $i\ge 0$ and $k\ge 0$.
\end{proposition}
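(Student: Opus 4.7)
The approach is classical Poincar\'e duality with local coefficients, combined with a universal coefficients dualization. The plan is to establish, for each $\rho\in \Char(M)$, a chain of $\C$-linear isomorphisms
\[
H_i(M;\C_{\rho}) \;\cong\; H^{m-i}(M;\C_{\rho}) \;\cong\; H_{m-i}(M;\C_{\bar\rho})^{*},
\]
where the first step is cap product with the fundamental class $[M]$ (using that $M$ is closed and oriented, so the orientation local system is trivial), and the second is dualization of the cellular cochain complex. Taking dimensions, $\dim_{\C} H_i(M;\C_{\rho}) = \dim_{\C} H_{m-i}(M;\C_{\bar\rho})$, so $\rho\in \VV^{i}_k(M)$ if and only if $\bar\rho \in \VV^{m-i}_k(M)$, and the desired isomorphism is induced by the algebraic automorphism $\rho\mapsto\bar\rho$.

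To implement this, I would first fix a finite CW-structure on $M$ and lift it to the universal cover $\widetilde M$, obtaining a chain complex $C_*(\widetilde M;\Z)$ of finitely generated free $\Z[\pi]$-modules. Then $H_i(M;\C_{\rho})$ is computed by $C_*(\widetilde M)\otimes_{\Z[\pi]}\C_{\rho}$, while $H^i(M;\C_{\rho})$ is computed by $\Hom_{\Z[\pi]}(C_*(\widetilde M),\C_{\rho})$. The key algebraic observation is that for a finitely generated free $\Z[\pi]$-module $F$, there is a natural $\C$-linear isomorphism
\[
\Hom_{\Z[\pi]}(F,\C_{\rho}) \;\cong\; \bigl(F\otimes_{\Z[\pi]}\C_{\bar\rho}\bigr)^{*},
\]
where the appearance of $\bar\rho$ rather than $\rho$ reflects the standard conversion between left and right $\Z[\pi]$-module structures via the antiautomorphism $g\mapsto g^{-1}$ on the group ring. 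Applying this level-wise and passing to (co)homology yields the universal coefficient identification above.

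The main technical point, although entirely standard, is keeping track of where the involution $g\mapsto g^{-1}$ actually enters: Poincar\'e duality by itself does not swap $\rho$ with $\bar\rho$ (thanks to orientability), but the passage from cohomology back to homology via dualization does. An equivalent way to organize the argument that avoids this bookkeeping is to phrase Poincar\'e duality directly as a non-degenerate pairing $H_i(M;\C_{\rho})\otimes H_{m-i}(M;\C_{\bar\rho})\to \C$ obtained from cap product composed with the evaluation pairing $\C_{\rho}\otimes \C_{\bar\rho}\to \C$; this pairing is visibly non-degenerate cell-by-cell on the universal cover, and the conclusion follows immediately. Either way, since $\rho\mapsto\bar\rho$ is a biregular automorphism of $\Hom(\pi,\C^{*})$, the resulting bijection $\VV^{i}_k(M)\leftrightarrow \VV^{m-i}_k(M)$ is an isomorphism of algebraic varieties.
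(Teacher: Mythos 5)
Your argument is correct and is essentially the paper's proof: the paper simply invokes Poincar\'e duality with local coefficients (citing Wall, \S 2) to get $H^{i}(M,\C_{\rho})\cong H_{m-i}(M,\C_{\bar\rho})$, which packages the cap product together with the $\Hom$--$\otimes$ dualization you spell out, the involution $g\mapsto g^{-1}$ entering exactly where you say it does. Your write-up just makes explicit the universal-coefficients bookkeeping that the paper leaves implicit.
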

  
\begin{proof}
Poincar\'e duality with local coefficients (see e.g.~\cite[\S 2]{W}) yields  
isomorphisms $H^{i}(M,\C_{\rho})\cong  H_{m-i}(M,\C_{\bar\rho})$.  The 
claim follows.
\end{proof}

A well-known theorem of E.~Moise insures that every $3$-manifold 
has a smooth structure. Thus, the above proposition together with the 
discussion from \S\ref{subsec:cv} yield the following corollary.

\begin{corollary}
\label{cor:cv3d}
Let $M$ be a closed, orientable $3$-manifold. 
Then  $\VV^i_0(M)=H^1(M,\C^*)$ for all $i\ge 0$, 
$\VV^{3}_1(M)=\VV^0_1(M)=\{\one\}$, 
\begin{equation*}
\VV^{2}_k(M)=\VV^1_k(M)\quad \text{for $1\le k \le b_1(M)$},
\end{equation*}
and otherwise $\VV^i_k(M)=\myempty$.  
\end{corollary}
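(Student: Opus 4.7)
The plan is to read the corollary off directly from Proposition \ref{prop:cv man}, the elementary facts about depth-zero characteristic varieties, and standard CW-considerations. First, I would invoke the classical theorem of E.~Moise, which guarantees that every topological $3$-manifold admits a (unique) smooth structure; in particular, our closed orientable $M$ meets the smoothness hypothesis of Proposition \ref{prop:cv man}. Applying that proposition with $m=3$ then yields algebraic isomorphisms
\[
\VV^{i}_k(M) \cong \VV^{3-i}_k(M) \qquad \text{for all $i,k\ge 0$,}
\]
induced by the involution $\rho \mapsto \bar\rho$ on $\Char(\pi)$.

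Next, I would pull from \S\ref{subsec:cv} the general facts that hold for every (suitably finite) space $X$: one has $\VV^i_0(X)=\Char(X)$ for every $i\ge 0$ essentially by definition, while in degree zero $\VV^0_1(X)=\{\one\}$ and $\VV^0_k(X)=\myempty$ for $k\ge 2$ (since $H_0(X,\C_\rho)=0$ precisely when $\rho$ is non-trivial as a character, and $\dim H_0(X,\C_\one)=1$). The first of these identities gives the assertion $\VV^i_0(M)=H^1(M,\C^*)$ for free.

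I would then feed these inputs through the duality of the first step. The case $i=3$ gives $\VV^3_1(M)\cong \VV^0_1(M)=\{\one\}$ and $\VV^3_k(M)\cong \VV^0_k(M)=\myempty$ for $k\ge 2$, and the case $i=2$ produces the equality $\VV^2_k(M)=\VV^1_k(M)$ for every $k\ge 0$; in the stated range $1\le k\le b_1(M)$ this is the non-trivial content, and outside that range both sides vanish when $k$ exceeds the number of twisted cycles available, which is covered by the ``otherwise'' clause. Finally, for $i\ge 4$ the claim $\VV^i_k(M)=\myempty$ is automatic: since $M$ has the homotopy type of a finite CW-complex of dimension $3$, the twisted cellular chain complex in \eqref{eq:eval cc} vanishes in degrees $\ge 4$, so $H_i(M,\C_\rho)=0$ for every $\rho$.

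There is no real obstacle here: the statement is a bookkeeping exercise combining Moise's theorem with Proposition \ref{prop:cv man} and the formal facts about $\VV^0$. The only point requiring care is the appeal to a smooth (or at least triangulated) structure on $M$, which is precisely what Moise's theorem provides and which is needed to justify Poincar\'e duality with local coefficients.
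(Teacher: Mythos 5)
Your proposal is correct and follows exactly the paper's own argument: invoke Moise's theorem to get a smooth structure, apply Proposition \ref{prop:cv man} with $m=3$, and combine with the elementary facts from \S\ref{subsec:cv} about $\VV^i_0$ and $\VV^0_k$ (plus the vanishing above the dimension of the CW-structure). The only cosmetic remark is that, strictly speaking, Proposition \ref{prop:cv man} identifies $\VV^2_k(M)$ with the image of $\VV^1_k(M)$ under the involution $\rho\mapsto\bar\rho$ rather than giving a literal set equality, but the paper states and uses the corollary in exactly the same way.
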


Thus, in order to compute the characteristic varieties of a 
$3$-manifold $M$ as above, it is enough 
to determine the sets $\VV^1_k(M)$ for $1\le k\le b_1(M)$.  

\subsection{The Alexander polynomial of a closed $3$-manifold}
\label{subsec:alex 3d}

As before, let $H$ be the quotient of $H_1(M,\Z)$ by its 
torsion subgroup.  We will identify the group ring 
$\Z[H]$ with the ring of Laurent polynomials
$\Z[t_1^{\pm 1},\dots , t_n^{\pm 1}]$, where $n=b_1(M)$. 
For a Laurent polynomial $\lambda$, 
we will denote by $\bar{\lambda}$ its image 
under the involution $t_i\mapsto t_i^{-1}$, and 
say that $\lambda$ is {\em symmetric}\/ if   
$\lambda \doteq \bar{\lambda}$.

Assume now that $M$ is $3$-dimensional, and let 
$\Delta_M\in \Z[H]$ be its Alexander polynomial. Recall that 
$\Delta_M$ is only defined up to units, i.e., up to multiplication 
by monomials $\pm t_1^{a_1}\cdots t_n^{a_n}$ with $a_i\in \Z$. 
Work of Milnor \cite{Mi62} and Turaev \cite{Tu75, Tu} shows that 
$\Delta_M$ is symmetric. 

Conversely, if $H=\Z$ or $\Z^2$, then every symmetric Laurent polynomial 
$\lambda\in \Z[H]$ can be realized as the Alexander polynomial of a closed, 
orientable $3$-manifold $M$ with $H_1(M,\Z)=H$; see \cite[VII.5.3]{Tu}.
Furthermore, every symmetric Laurent polynomial $\lambda$ in $n\le 3$ 
variables such that $\lambda(\one)\ne 0$ can be realized as the Alexander 
polynomial of a closed, orientable $3$-manifold $M$ with $b_1(M)=n$; 
see \cite{Az}.

On the other hand, for $n\ge 4$, the situation is quite different.

\begin{theorem}[\cite{Tu}] 
\label{thm:az-bis}
If $M$ is a closed, orientable  $3$-manifold $M$ with $b_1(M)\ge 4$,  
then $\Delta_M(\one)=0$.
\end{theorem}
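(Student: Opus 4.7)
\smallskip
\textbf{Proof plan.}  My approach is to analyze the Alexander module $A_M = H_1(M^H,\Z)$ as a $\Z[H]$-module via the chain complex of the maximal torsion-free abelian cover $M^H$ of $M$, combining Poincar\'e duality on the cover with a Koszul-type analysis of the augmentation ideal $I_H$.

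First I would dispose of the easy case: if the $\Z[H]$-rank of $A_M$ is positive, then $E_0(A_M)=(0)$, hence $\Delta_M=\ord^0(A_M)=0$ and $\Delta_M(\one)=0$ holds trivially.  Assume therefore that $A_M$ is $\Z[H]$-torsion, and choose a handle decomposition of $M$ with a single $0$-handle, a single $3$-handle, and $g$ of each of $1$- and $2$-handles (possible since $\chi(M)=0$).  This yields a length-three free chain complex
\[
0\to\Z[H]\xrightarrow{\partial_3}\Z[H]^g\xrightarrow{\partial_2}\Z[H]^g\xrightarrow{\partial_1}\Z[H]\to 0,
\]
which, by Poincar\'e duality in $M^H$, is self-dual under the involution $t\mapsto t^{-1}$.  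In particular, $\partial_3$ is conjugate-transpose to $\partial_1$, and both have all entries in the augmentation ideal $I_H$.  The torsion hypothesis combined with Poincar\'e duality with $\Q(H)$-coefficients makes this complex acyclic over $\Q(H)$, forcing $\partial_2$ to have generic rank $g-1$; on the other hand, evaluating at $\one$ the outer boundaries vanish, and since $\rk H_1(M,\Z)=n$ one has $\rk_\Z\partial_2(\one)=g-n$.

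The main step is then to translate this $(n{-}1)$-fold rank drop, together with the Poincar\'e-duality-induced relations, into divisibility of $\Delta_M$ by $I_H$.  The Koszul resolution of $\Z=\Z[H]/I_H$ shows that $\ker\partial_1$, fitting in the short exact sequence $0\to\ker\partial_1\to\Z[H]^g\to I_H\to 0$, has projective dimension $n-2$ over $\Z[H]$.  Splicing a free resolution of $\ker\partial_1$ with $\partial_2$ yields a free presentation of $A_M$ whose defining matrix mixes the entries of $\partial_2$ with Koszul differentials having entries in $I_H$.  Tracking the contribution of these extra $I_H$-factors through the Fitting ideal $E_0(A_M)$, one shows that for $n\ge 4$ every generator of $E_0(A_M)$ lies in $I_H$; taking greatest common divisor then gives $\Delta_M\in I_H$, i.e.\ $\Delta_M(\one)=0$.

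The main obstacle is this last bookkeeping step: reconciling the self-duality of $\partial_2$ imposed by Poincar\'e duality with the homological length $n-1$ of the Koszul resolution of $I_H$, in such a way as to produce exactly the threshold $n\ge 4$ (and to leave the cases $n\le 3$ untouched, as required by the realization results mentioned above).  This careful syzygy accounting is the technical heart of Turaev's argument in \cite{Tu}.
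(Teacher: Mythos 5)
The paper gives no argument for this statement beyond the citation to Turaev's Corollaries II.2.2 and II.5.2.1, so the real question is whether your reconstruction of that argument is sound. Your setup is: a genus-$g$ handle decomposition, the induced free $\Z[H]$-chain complex of $M^H$ and its duality, the generic rank $g-1$ of $\partial_2$ over the quotient field versus its rank $g-n$ after augmentation at $\one$. All of this is correct, and the rank drop is indeed the right raw input: it shows that the relevant determinantal ideals sit inside high powers of the augmentation ideal $I_H$.

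The gap is in your final inference. From ``every generator of $E_0(A_M)$ lies in $I_H$'' you cannot conclude that the greatest common divisor of those generators lies in $I_H$: the ideal $I_H=(t_1-1,\dots,t_n-1)$ has all of its generators in $I_H$ but gcd equal to $1$. This is not a removable technicality. For $n=2$ or $3$ the generators of the relevant elementary ideal also lie in powers of $I_H$, yet $\Delta_M(\one)$ is typically nonzero there (the paper notes, citing Alcaraz, that any symmetric $\lambda$ in $\le 3$ variables with $\lambda(\one)\ne 0$ is realizable); so any argument of the shape ``generators in $I_H$ implies gcd in $I_H$'' proves too much, and your sketch supplies no mechanism that singles out the threshold $n\ge 4$. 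The missing ingredient is the structure theorem already quoted in the paper as Theorem \ref{thm:mcmullen}, namely $E_1(M)=I_H^{\,2}\cdot(\Delta_M)$ for $n\ge 2$, which converts ideal-membership information into a statement about the principal ideal $(\Delta_M)$. Combined with the containment $E_1(M)\subseteq I_H^{\,n-1}$ --- which follows from exactly the rank drop you observe, since a minor of size $m-1$ of a matrix whose reduction at $\one$ has rank $m-n$ lies in $I_H^{\,n-1}$ --- one gets $(t_i-1)^2\,\Delta_M\in I_H^{\,n-1}$, hence the order of vanishing of $\Delta_M$ at $\one$ is at least $n-3$. For $n\ge 4$ this is positive, which is the assertion; for $n=3$ it is vacuous, as it must be. Your syzygy-splicing plan for $E_0(A_M)$ is not obviously wrong, but as written it neither carries out the decisive bookkeeping nor identifies the statement that would legitimize passing from a Fitting ideal to its gcd.
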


The theorem follows at once from \cite[\S{II}, Corollaries 2.2 and 5.2.1]{Tu}.  
As an application, we deduce that $\Delta_M\ne 1$ if $b_1(M)\ge 4$, 
a result which is also proved in \cite[Theorem 8]{Az} by different means.

\subsection{Characteristic varieties and the Alexander polynomial}
\label{subsec:alex cv}
Now consider the maximal torsion-free abelian cover $M^H\to M$, 
and let $A_M= H_1(M^H,\Z)$, viewed as a $\Z[H]$-module 
as in  \S \ref{subsec:alexpoly}. Recall that the determinantal ideal 
$E_1(A_M)$ is always contained in the ideal $\Delta_{M}\cdot I_H$, 
where $I_H=\ker(\varepsilon\colon \Z[H]\to \Z)$ is the augmentation 
ideal, provided $b_1(M)\ge 2$. In \cite[Theorem 5.1]{McM}, 
McMullen established a closer relationship between these 
ideals, in the case when $M$ is a closed, orientable $3$-manifold $M$  
(see also Turaev \cite[Theorem II.1.2]{Tu}).

\begin{theorem}[\cite{McM}]
\label{thm:mcmullen}
Let $n=b_1(M)$. Then
\begin{equation}
\label{eq:mcm}
E_1(A_M) = \begin{cases} 
(\Delta_M) & \text{if $n\le 1$},\\[2pt]
I^2_H\cdot (\Delta_M) & \text{if $n\ge 2$}.
\end{cases}
\end{equation}
\end{theorem}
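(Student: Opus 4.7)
The plan is to exploit the cellular chain complex of the maximal torsion-free abelian cover $M^H \to M$ together with Poincar\'e duality on $M$. First I would pick a CW-decomposition of $M$ with a single $0$-cell and a single $3$-cell, and lift it equivariantly to $M^H$; over $R = \Z[H]$ this yields a chain complex of free $R$-modules
\begin{equation*}
0 \to R \xrightarrow{\partial_3} C_2 \xrightarrow{\partial_2} C_1 \xrightarrow{\partial_1} R \to 0
\end{equation*}
whose first homology is the Alexander module $A_M$. The image of $\partial_1$ is exactly the augmentation ideal $I$, so the matrix of $\partial_1$ has all entries in $I$. By Poincar\'e duality applied chain-level (in the form that this complex of free $R$-modules is $R$-chain-homotopy-equivalent to its conjugate-transpose dual in reversed degrees), the matrix of $\partial_3$ is identified with the conjugate-transpose of $\partial_1$; in particular, its entries also lie in $\bar{I} = I$.

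The cases $n \le 1$ can be handled directly. When $n=0$, $I = 0$ and the assertion $E_1(M) = (\Delta_M)$ is tautological from the definitions. When $n=1$, $R = \Z[t^{\pm 1}]$ is a PID, so elementary divisor theory on the presentation matrix of $A_M$ immediately identifies $E_1(A_M)$ with the principal ideal generated by $\Delta_M$.

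For $n \ge 2$, I would run the argument through the exact sequence
\begin{equation*}
0 \to A_M \to C_1/\mathrm{im}(\partial_2) \xrightarrow{\bar{\partial}_1} I \to 0
\end{equation*}
extracted from the bottom of the chain complex. The Fitting ideals of $\mathrm{coker}(\partial_2)$ are read off directly from the matrix of $\partial_2$, while those of $I$ are powers of $I$, computable from the standard Koszul-type resolution of the augmentation ideal over $R = \Z[\Z^n]$. Combined with the observation from the first step that $\partial_3$ also has entries in $I$, a careful tracking of Fitting ideals through this exact sequence should yield the desired equality $E_1(A_M) = I^2 \cdot (\Delta_M)$.

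The main obstacle is this last Fitting-ideal book-keeping, since Fitting ideals are not exact on short exact sequences. The route I would try is to splice $\partial_2$ with $\partial_3$ via the Poincar\'e-duality identification $C_2 \cong \overline{\Hom_R(C_1, R)}$ to produce an enlarged \emph{square} presentation matrix whose determinant, up to a unit in $R$, equals $\Delta_M$. Each $(n-1)\times(n-1)$ minor one drops in passing from $E_0$ to $E_1$ then contributes a linear factor lying in $I$, and the symmetric role of $\partial_1$ and $\partial_3$ supplies two such factors, accounting for the $I^2$. The Poincar\'e-duality symmetry is essential here: without the extra factor of $I$ coming from $\partial_3$, one would only recover Turaev's general bound $E_1(\pi) \subseteq \Delta_\pi \cdot I_H$ (cited after the definition of the Alexander polynomial in \S\ref{subsec:alexpoly}), and the closed $3$-manifold hypothesis is precisely what upgrades that inclusion to the asserted equality.
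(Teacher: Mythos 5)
First, a point of comparison: the paper does not prove this statement at all --- it is quoted from McMullen \cite{McM} (Theorem~5.1; see also \cite{Tu}, Theorem~II.1.2) --- so your attempt can only be measured against the cited proof. Your outline does assemble the right raw materials for that proof: a balanced (Heegaard) cell structure with one $0$-cell and one $3$-cell, the resulting complex of free $\Z[H]$-modules, the observation that the entries of $\partial_1$ generate $I$, and Poincar\'e duality forcing the entries of $\partial_3$ into $I$ as well. But the decisive step is missing, and the substitute you propose for it fails. With such a cell structure the matrix of $\partial_2$ is already square, of size $g\times g$, and its determinant is \emph{zero} whenever $n\ge 1$: the relation $\partial_1\partial_2=0$ with $\partial_1\ne 0$ gives $\partial_2$ a nontrivial left kernel over the fraction field. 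So there is no ``enlarged square presentation matrix whose determinant, up to a unit, equals $\Delta_M$,'' and the heuristic that each deleted row or column ``contributes a linear factor lying in $I$'' is not an argument.

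What duality actually supplies is a pair of annihilating vectors: a row vector $u$ with $u\,\partial_2=0$ (from $\partial_1\partial_2=0$) and a column vector $w$ with $\partial_2\,w=0$ (from $\partial_2\partial_3=0$), whose entries generate $I$ and $\bar I=I$ respectively. The heart of the proof is then Turaev's factorization lemma: since every column of the adjugate of $\partial_2$ lies in the right kernel and every row in the left kernel, one gets $\det\partial_2(i;j)=\pm\, u_j w_i\,\Delta$ for a \emph{single} element $\Delta$, and --- this is exactly where $n\ge 2$ enters --- $\Delta$ lies in $R=\Z[H]$ rather than merely in its fraction field because $I$ has height $\ge 2$ in the UFD $R$. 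Granting this, $E_1(M)$ is generated by the products $u_jw_i$, hence equals $\Delta\cdot I\cdot\bar I=\Delta\cdot I^2$, and $\Delta\doteq\Delta_M$ because $\gcd(I^2)\doteq 1$ for $n\ge 2$. (Lemma~\ref{lem:turaev} in the paper is precisely the linearized shadow of this lemma.) Your proposal never states or proves this factorization and never identifies where $n\ge 2$ is used, even though that hypothesis is what separates the two cases of the theorem. Nor can the alternative route through Fitting ideals of $0\to A_M\to\coker\partial_2\to I\to 0$ succeed on its own: it would express $E_1(\coker\partial_2)$ in terms of $E_0(A_M)$, which is in general not a principal ideal, whereas $(\Delta_M)$ is only the principal ideal generated by its gcd. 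The global symmetry of the whole four-term complex, packaged as the lemma above, is what upgrades the gcd to an equality of ideals.
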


Recall now that $\ZZ^1_1(M)$ denotes the intersection of the characteristic 
variety $\VV^1_1(M)$ with the identity component of the character group, 
$\Char^0(M)=(\C^*)^n$.  

\begin{proposition}
\label{prop:cv 3d}
Let $M$ be a closed, orientable, $3$-dimensional manifold.  Then 
\begin{equation}
\label{eq:vdel}
\ZZ^1_1(M)= V(\Delta_M) \cup \{\one\}.
\end{equation}
Moreover,  if $b_1(M)\ge 4$,  then $\ZZ^1_1(M)= V(\Delta_M)$.  
\end{proposition}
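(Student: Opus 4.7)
The plan is to derive both assertions as direct consequences of Proposition \ref{prop:zz1-pi} combined with McMullen's Theorem \ref{thm:mcmullen}, together with Theorem \ref{thm:az-bis} for the sharper conclusion when $b_1(M)\ge 4$.

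First, I would pass from the manifold to its fundamental group $\pi=\pi_1(M)$, invoking the observation from \S\ref{subsec:cv} that $\VV^1_k(M)=\VV^1_k(\pi)$, so that Proposition \ref{prop:zz1-pi} becomes applicable. Next, I would verify the hypothesis of that proposition using Theorem \ref{thm:mcmullen}: the first elementary ideal of $\pi$ equals $(\Delta_M)$ when $n=b_1(M)\le 1$, and equals $I_H^2\cdot (\Delta_M)$ when $n\ge 2$, where $I_H\subset \Z[H]$ is the augmentation ideal of the group ring of $H=H_1(M,\Z)/\tor$. In either situation there exists an integer $s\ge 0$ (namely $s=0$ or $s=2$) for which
\[
I_H^s\cdot (\Delta_M)\subseteq E_1(\pi),
\]
which is precisely the hypothesis of Proposition \ref{prop:zz1-pi}. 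Applying that proposition gives the first equality $\ZZ^1_1(M)=V(\Delta_M)\cup \{\one\}$.

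For the second assertion, I would invoke Theorem \ref{thm:az-bis} of Turaev, which asserts that $\Delta_M(\one)=0$ whenever $b_1(M)\ge 4$. This means $\one\in V(\Delta_M)$, so $V(\Delta_M)\cup \{\one\}=V(\Delta_M)$, and the equality from the previous paragraph collapses to $\ZZ^1_1(M)= V(\Delta_M)$, as claimed.

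There is no substantive obstacle in this argument, since the content has been isolated in the earlier statements. The role of the proposition is essentially to package McMullen's precise identification of $E_1(\pi)$ in the $3$-manifold setting as an input to the general criterion of Proposition \ref{prop:zz1-pi}; the only point worth flagging is keeping careful track of the group-theoretic conventions so that Proposition \ref{prop:zz1-pi}, which is phrased for arbitrary finitely generated groups, is applied with the $H$ and $I_H$ appearing in Theorem \ref{thm:mcmullen}.
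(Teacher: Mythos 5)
Your proof is correct and follows exactly the paper's own route: the first equality is obtained by feeding McMullen's computation of $E_1(\pi)$ (Theorem \ref{thm:mcmullen}) into the criterion of Proposition \ref{prop:zz1-pi}, and the case $b_1(M)\ge 4$ follows from Turaev's vanishing result $\Delta_M(\one)=0$ (Theorem \ref{thm:az-bis}). Your write-up merely makes explicit the choice of $s$ and the bookkeeping that the paper leaves implicit.
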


\begin{proof}
The first equality follows at once from Proposition \ref{prop:zz1-pi}
and Theorem \ref{thm:mcmullen}.   If $b_1(M)\ge 4$, the second 
equality follows from the first one and Theorem \ref{thm:az-bis}.
\end{proof}

\begin{remark}
\label{rem:zzvv}
If the group $H_1(M,\Z)$ has non-trivial torsion, 
the inclusion $\ZZ^1_1(M)\subseteq \VV^1_1(M)$ may very  
well be strict.  A rich source of examples illustrating this 
phenomenon is provided by Seifert fibered manifolds (see  
Example \ref{ex:sigma248} below).
\end{remark}

\begin{corollary}
\label{cor:tcv}
Let $M$ be a closed, orientable, $3$-dimensional manifold, 
and set $W=\VV^1_1(M)$.  
\begin{enumerate}
\item \label{dv1}
If $\Delta_M(\one) \ne 0$, then 
$\tau_{\one}(W)=\TC_{\one}(W)=\{\one\}$.
\item \label{dv2}
If $\Delta_M(\one) =0$, yet $\Delta_M\ne 0$, then 
$\tau_{\one}(W)= \tau_{\one}(V(\Delta_M)) $ and 
$\TC_{\one}(W)= \TC_{\one}(V(\Delta_M))$.
\item \label{dv3}
If $\Delta_M= 0$, then 
$\tau_{\one}(W)=\TC_{\one}(W)=H^1(M,\C)$.
\end{enumerate}
Moreover,  if $b_1(M)\ge 4$,  then case \eqref{dv1} does not occur.
\end{corollary}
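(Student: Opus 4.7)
The plan is to reduce all three cases to a direct reading of Proposition~\ref{prop:cv 3d}, using only the fact that $\tau_{\one}$ and $\TC_{\one}$ are local invariants of the analytic germ at $\one$. Since the identity component $\Char^0(M)$ is the only connected component of $\Char(M)$ passing through $\one$, the germ of $W=\VV^1_1(M)$ at $\one$ coincides with the germ at $\one$ of $\ZZ^1_1(M)=W\cap \Char^0(M)$. Consequently,
\[
\tau_{\one}(W)=\tau_{\one}(\ZZ^1_1(M)), \qquad \TC_{\one}(W)=\TC_{\one}(\ZZ^1_1(M)),
\]
and by Proposition~\ref{prop:cv 3d} the right-hand sides depend only on the germ at $\one$ of $V(\Delta_M)\cup\{\one\}$.

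For case \eqref{dv1}, the hypothesis $\Delta_M(\one)\ne 0$ says $\one\notin V(\Delta_M)$, so the germ at $\one$ of $\ZZ^1_1(M)$ is the isolated point $\{\one\}$; by the general remarks in \S\ref{subsec:exp tc}, both $\tau_{\one}$ and $\TC_{\one}$ of an isolated point at $\one$ equal $\{\zero\}$. For case \eqref{dv2}, we have $\one\in V(\Delta_M)$, so the extra $\{\one\}$ is redundant and the germ at $\one$ of $\ZZ^1_1(M)$ equals the germ at $\one$ of $V(\Delta_M)$; this yields $\tau_{\one}(W)=\tau_{\one}(V(\Delta_M))$ and $\TC_{\one}(W)=\TC_{\one}(V(\Delta_M))$. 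For case \eqref{dv3}, $\Delta_M=0$ forces $V(\Delta_M)=\Char^0(M)=(\C^*)^n$, which is an algebraic subtorus, so as recalled in \S\ref{subsec:exp tc} the two tangent cones coincide with the tangent space at $\one$, i.e., all of $H^1(M,\C)$.

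The final assertion that case \eqref{dv1} does not occur when $b_1(M)\ge 4$ is immediate from Theorem~\ref{thm:az-bis}, which says that $\Delta_M(\one)=0$ whenever $b_1(M)\ge 4$. No real obstacle arises here: once Proposition~\ref{prop:cv 3d} is in hand, the proof is a short book-keeping exercise about germs at $\one$. The only point requiring care is that $\tau_{\one}$ and $\TC_{\one}$ are blind to components of $W$ that miss $\one$, so the torsion-induced components of $\Char(M)$ lying outside $\Char^0(M)$, together with the superfluous point $\{\one\}$ appended in Proposition~\ref{prop:cv 3d}, are harmless in every case.
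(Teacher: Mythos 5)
Your proposal is correct and follows essentially the same route as the paper, which simply states that the corollary ``follows straight from Proposition~\ref{prop:cv 3d} and from the definitions of the two types of tangent cones''; your writeup supplies exactly the intended details (germ-locality of $\tau_{\one}$ and $\TC_{\one}$, the reduction to $\ZZ^1_1(M)$, the three-way case check, and Theorem~\ref{thm:az-bis} for the final assertion).
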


\begin{proof}
Recall from \ref{subsec:exp tc} that both $\tau_{\one}(W)$ and $\TC_{\one}(W)$ 
depend only on the analytic germ of $W$ around the identity $\one \in \Char(M)^0$. 
Thus, in computing these tangent cones at $\one$, we may replace $W$ by 
$W\cap  \Char(M)^0=\ZZ^1_1(M)$. All the claims now follow directly from 
Proposition \ref{prop:cv 3d}.
\end{proof}

\section{A Tangent Cone theorem for $3$-manifolds}
\label{sect:3dim}

For closed, orientable, $3$-dimensional manifolds, 
the Tangent Cone theorem takes a rather surprisingly 
concrete form, which we proceed to describe in this section.

\subsection{A $3$-dimensional Tangent Cone theorem}
\label{subsec:tcone 3-mfd}

We start by isolating a class of closed $3$-manifolds 
for which the full Tangent Cone formula \eqref{eq:tc} 
holds in degree $i=1$ and depth $k=1$. 

\begin{lemma}
\label{lem:md0}
Let $M$ be a closed, orientable, $3$-dimensional manifold 
such that  $\Delta_M=0$.  Then 
$\tau_{\one}(\VV^1_1(M))=\TC_{\one}(\VV^1_1(M))= \RR^1_1(M)=H^1(M,\C)$. 
\end{lemma}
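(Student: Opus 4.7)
The plan is to chain together the results already established in Sections~3--5, with no real new computation. First I would observe that the hypothesis $\Delta_M=0$ feeds directly into Lemma~\ref{lem:delta cv}\eqref{dc1}, which tells us that $\ZZ^1_1(M)=\VV^1_1(M)\cap\Char(M)^0$ is the entire identity component of the character group. Since $\Char(M)^0$ is the algebraic torus $(\C^*)^n$ with $n=b_1(M)$, its exponential tangent cone at $\one$ is all of $H^1(M,\C)$. Because exponential tangent cones are monotone under inclusion, this already forces
\[
\tau_{\one}(\VV^1_1(M))\supseteq \tau_{\one}(\Char(M)^0)=H^1(M,\C).
\]
(This is precisely the content of Corollary~\ref{cor:tcv}\eqref{dv3}, which I would simply quote to get the first two equalities.)

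Next I would invoke the universal chain of inclusions from Corollary~\ref{cor:tcone inc},
\[
\tau_{\one}(\VV^1_1(M))\subseteq \TC_{\one}(\VV^1_1(M))\subseteq \RR^1_1(M)\subseteq H^1(M,\C),
\]
and observe that once the leftmost set equals the ambient space $H^1(M,\C)$, every inclusion in the chain must be an equality. This forces $\RR^1_1(M)=H^1(M,\C)$ and completes the proof of all three stated equalities simultaneously.

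There is really no obstacle here: the statement is a direct assembly of Lemma~\ref{lem:delta cv}, Corollary~\ref{cor:tcv}, and Corollary~\ref{cor:tcone inc}. If I wanted a second, slightly more self-contained route to $\RR^1_1(M)=H^1(M,\C)$, I could alternatively fall back on Theorem~\ref{thm:res closed3m}: when $\Delta_M=0$ we necessarily have $b_1(M)\ge 1$, and a short case analysis (using $b_1(M)\le 1\Rightarrow \RR^1_1(M)\subseteq\{\zero\}$ is incompatible with $\tau_{\one}(\VV^1_1(M))=H^1(M,\C)$; and for $b_1(M)\ge 2$, the theorem gives $\RR^1_1(M)=H^1(M,\C)$ unless $b_1(M)$ is odd and $\mu_M$ generic, a case which via the inclusion chain would contradict $\tau_{\one}(\VV^1_1(M))=H^1(M,\C)$) confirms the same conclusion. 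But the cleaner route is just to let the sandwich of inclusions collapse.
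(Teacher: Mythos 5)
Your proposal is correct and follows essentially the same route as the paper, whose proof is the one-line observation that the lemma ``follows directly from Corollaries \ref{cor:tcone inc} and \ref{cor:tcv}''; you have simply unwound Corollary \ref{cor:tcv}\eqref{dv3} back to Lemma \ref{lem:delta cv}\eqref{dc1} and made the collapsing-sandwich step explicit. The alternative argument via Theorem \ref{thm:res closed3m} is unnecessary but harmless.
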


\begin{proof}
By case \eqref{dv3} of Corollary \ref{cor:tcv}, we have that 
$\tau_{\one}(\VV^1_1(M))=\TC_{\one}(\VV^1_1(M))= H^1(M,\C)$. 
On the other hand, by Corollary \ref{cor:tcone inc}, we always have 
$\TC_{\one}(\VV^1_1(M)) \subseteq \RR^1_1(M)$, while 
$\RR^1_1(M)\subseteq H^1(M,\C)$ by definition. The 
claim follows.
\end{proof}

\begin{example}
\label{ex:s1s2}
Let $M=\connsum_1^n S^1\times S^2$.  Then clearly 
$\mu_M=0$ and $\Delta_M=0$; in particular, Lemma \ref{lem:md0}  applies.
In fact, $M$ is formal, and so the Tangent Cone formula holds 
 in all degrees and depths.
\end{example}

The next result shows that the second half of the Tangent Cone formula 
holds for a large class of closed $3$-manifolds with odd first 
Betti number (regardless of whether these manifolds are $1$-formal 
or not), yet fails for most $3$-manifolds with even first Betti number.

\begin{theorem}
\label{thm:tc 3d}
Let $M$ be a closed, orientable $3$-manifold, and 
set $n=b_1(M)$. 
\begin{enumerate}
\item \label{odd} 
If $n\le 1$, or $n$ is odd, $n\ge 3$, and $\mu_M$ 
is generic, then $\TC_{\one}(\VV^1_1(M))=\RR^1_1(M)$.
\item \label{even} 
If $n$ is even, $n\ge 2$, then 
$\TC_{\one}(\VV^1_1(M))= \RR^1_1(M)$ if 
and only if $\Delta_M= 0$.
\end{enumerate}
\end{theorem}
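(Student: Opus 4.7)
The plan is to analyze the two parts of the theorem separately, using Corollary \ref{cor:tcv} to relate $\TC_{\one}(\VV^1_1(M))$ to the Alexander polynomial $\Delta_M$, and Theorem \ref{thm:res closed3m} (equivalently, \eqref{eq:r13m-intro}) to compute $\RR^1_1(M)$. When $n=b_1(M)\le 1$, both $\RR^1_1(M)$ and $\TC_{\one}(\VV^1_1(M))$ are contained in $\{\zero\}$, and the general inclusion of Corollary \ref{cor:tcone inc} forces equality. This leaves the substantive part (1) with $n$ odd, $n\ge 3$, and $\mu_M$ generic, together with all of part (2).

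For part (2), with $n$ even and $n\ge 2$, Theorem \ref{thm:res closed3m} gives $\RR^1_1(M)=H^1(M,\C)=\C^n$. If $\Delta_M=0$, Lemma \ref{lem:md0} immediately yields $\TC_{\one}(\VV^1_1(M))=\C^n$. Conversely, if $\Delta_M\ne 0$, then Corollary \ref{cor:tcv} shows that $\TC_{\one}(\VV^1_1(M))$ is either $\{\zero\}$ or the tangent cone at $\one$ to the proper hypersurface $V(\Delta_M)\subset (\C^*)^n$; in both subcases it has dimension at most $n-1$ and is properly contained in $\C^n$. Hence the Tangent Cone equality in part (2) holds if and only if $\Delta_M=0$.

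For part (1) with $n$ odd, $n\ge 3$, and $\mu_M$ generic, the genericity assumption gives $\Pf(\mu_M)\ne 0$, and Theorem \ref{thm:res closed3m} yields $\RR^1_1(M)=V(\Pf(\mu_M))$, a proper hypersurface in $\C^n$. The ``$\Delta_M=0$'' branch of Lemma \ref{lem:md0} is ruled out since it would force $\C^n\subseteq V(\Pf(\mu_M))$, contradicting $\Pf(\mu_M)\ne 0$. Thus $\Delta_M\ne 0$, and Corollary \ref{cor:tcv} identifies $\TC_{\one}(\VV^1_1(M))$ with the zero locus of the initial form $\init(\Delta_M|_{t_i-1=x_i})\in \C[x_1,\dots,x_n]$. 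Everything reduces to showing that this initial form and $\Pf(\mu_M)$ cut out the same subvariety.

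The initial-form identification is the principal technical obstacle. My strategy is to linearize the abelianized Fox Jacobian $\partial^{\ab}_2$ of a finite presentation of $\pi_1(M)$: under the substitution $t_i\mapsto 1+x_i$ and passage to the associated graded of the $I_H$-adic filtration on $\Z[H]$, the linear term of $\partial^{\ab}_2$ encodes the cup-product form on $H^1(M,\C)$ and, after an appropriate basis change, coincides with the matrix $\delta_M$ of \eqref{eq:delta1 3m}. By Theorem \ref{thm:mcmullen}, $E_1(M)=I_H^2\cdot (\Delta_M)$; passing to the associated graded, the ideal generated by the initial forms of the codimension-one minors of $\partial^{\ab}_2$ coincides with the ideal generated by the polynomials $x_ix_j\cdot \init(\Delta_M|_{t_i-1=x_i})$. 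On the other hand, Lemma \ref{lem:turaev} computes the corresponding maximal minors of $\delta_M$ as $(-1)^{i+j}x_ix_j\Pf(\mu_M)^2$. Comparing, $\init(\Delta_M|_{t_i-1=x_i})$ and $\Pf(\mu_M)^2$ generate the same principal ideal and therefore agree up to a nonzero scalar; since the zero locus of $\Pf(\mu_M)^2$ equals $V(\Pf(\mu_M))$, we conclude $\TC_{\one}(\VV^1_1(M))=V(\Pf(\mu_M))=\RR^1_1(M)$, completing the proof.
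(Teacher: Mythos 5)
Your overall skeleton matches the paper's: the case $n\le 1$ is dispatched by the general inclusions, part~(2) is exactly the paper's dichotomy ($\RR^1_1(M)=\C^n$ versus $\TC_{\one}(\VV^1_1(M))$ a proper subvariety when $\Delta_M\ne 0$), and part~(1) is correctly reduced to the identity $\init(\widetilde\Delta_M)\doteq \Det(\mu_M)=\Pf(\mu_M)^2$. Your observation that genericity rules out $\Delta_M=0$ via Lemma \ref{lem:md0} is a nice explicit touch. However, there are two genuine gaps.

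First, the key identity $\init(\widetilde\Delta_M)\doteq\Det(\mu_M)$ is precisely where the paper invokes Turaev's Theorem III.2.2 of \cite{Tu}, whereas you attempt to derive it by ``linearizing'' the abelianized Fox Jacobian. Two steps of that derivation are unjustified. (a) The claim that the linear term of $\partial^{\ab}_2$ ``coincides with the matrix $\delta_M$ after an appropriate basis change'' is the heart of the matter: $\partial^{\ab}_2$ is an $s\times m$ matrix attached to an arbitrary finite presentation, while $\delta_M$ is the $n\times n$ skew-symmetric matrix built from the cup product \emph{and Poincar\'e duality}; identifying the two (even up to the right notion of equivalence) is a chain-level duality statement, essentially equivalent to what Turaev proves, not a routine change of basis. (b) Even granting (a), the initial form of a minor of $\partial^{\ab}_2$ need not equal the corresponding minor of the linearized matrix --- cancellation in lowest degree can raise the order. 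One must use $\Det(\mu_M)\ne 0$ (genericity) to exclude cancellation, and then compare orders of the two generating sets of $E_1(M)$ (the minors versus $(t_i-1)(t_j-1)\Delta_M$) to get divisibility in both directions; as written, ``passing to the associated graded'' and ``comparing'' elides all of this.

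Second, your uniform treatment of part~(1) breaks at $n=3$. There $\Pf(\mu_M)$ is a nonzero constant, so $V(\Pf(\mu_M))=\myempty$, while $\RR^1_1(M)=\{\zero\}$ (the origin always lies in $\RR^1_1$ when $b_1\ge 1$); moreover $\Delta_M(\one)$ can be nonzero when $n=3$, so case \eqref{dv1} of Corollary \ref{cor:tcv} applies and $\TC_{\one}(\VV^1_1(M))=\{\zero\}$ rather than the zero locus of an initial form. The paper handles $n=3$ separately: genericity gives $\mu_M=e_1e_2e_3$, hence $\RR^1_1(M)=\{\zero\}$, and Libgober's inclusion $\TC_{\one}(\VV^1_1(M))\subseteq\RR^1_1(M)$ forces equality. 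You should add this case (and note that Proposition \ref{prop:cv 3d} only guarantees $\ZZ^1_1(M)=V(\Delta_M)$ outright when $b_1(M)\ge 4$).
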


\begin{proof}
\eqref{odd} 
If $n\le 1$, the Tangent Cone formula always holds. If 
$n= 3$, then our genericity assumption implies that $\mu_M=e_1e_2e_3$ 
in a suitable basis for $H^1(M,\C)$.  It follows that $\RR^1_1(M)=\{\zero\}$, 
and so $\TC_{\one}(\VV^1_1(M))=\{\zero\}$, too, by Theorem \ref{thm:lib}.

So let assume that $n$ is odd and $n>3$. 
In this case, Proposition \ref{prop:cv 3d} insures that 
$\ZZ^1_1(M)=V(\Delta_M)$.  Moreover, as noted previously, $\VV^1_1(M)$ 
coincides with $\ZZ^1_1(M)$ around the identity, and so the two 
varieties share the same tangent cone at $\one$. 

Now, as explained in \S\ref{subsec:exp tc}, 
$\TC_{\one}(V(\Delta_M))$ is the variety defined by 
the homogeneous polynomial $\init(\widetilde\Delta_M)$, where 
$\widetilde\Delta_M(x_1,\dots,x_n)=\Delta_M(x_1+1,\dots,x_n+1)$. 
Putting things together, we conclude that
\begin{equation}
\label{eq:tcone vdel}
\TC_{\one}(\VV^1_1(M))= V(\init (\widetilde\Delta_M)).
\end{equation}

On the other hand, as shown by Turaev in 
\cite[Theorem III.2.2]{Tu}, for $n\ge 3$ and $n$ odd, 
we have that
\begin{equation}
\label{eq:inwide}
\init(\widetilde\Delta_M)=\Det(\mu_M).
\end{equation}

We also know from Lemma \ref{lem:turaev} that $\Det(\mu)=\Pf(\mu)^2$; 
hence, $V(\Det(\mu_M)=V(\Pf(\mu_M))$.  
Finally, since $n$ is odd, $n>3$, and $\mu_M$ is generic, 
Theorem \ref{thm:res closed3m} implies that 
$V(\Pf(\mu_M))=\RR^1_1(M)$. Combining the aforementioned 
equalities, we conclude that 
\[
\TC_{\one}(\VV^1_1(M)) =V(\init (\widetilde\Delta_M))=V(\Det(\mu_M)=V(\Pf(\mu_M))=
\RR^1_1(M).
\]
\eqref{even} 
Now suppose that $n$ is even and $n\ge 2$.  By 
Theorem \ref{thm:res closed3m}, we have that $\RR^1_1(M)=\C^n$.  
On the other hand, by Corollary \ref{cor:tcv}, the following alternative holds:
if $\Delta_M=0$, then  $\TC_{\one}(\VV^1_1(M))$ also equals $\C^n$;  
otherwise  $\TC_{\one}(\VV^1_1(M))$ is a proper subvariety of 
$\C^n$.  This completes the proof.
\end{proof}

\subsection{Algebraic models for $3$-manifolds}
\label{subsec:models3d}

As an application of the techniques developed so far, we derive 
a partial characterization of the formality and finiteness 
properties for rational models of $3$-manifolds.

\begin{theorem}
\label{thm:even betti1}
Let $M$ be a closed, orientable, $3$-dimensional manifold, and set 
$n=b_1(M)$. 

\begin{enumerate}
\item \label{f1}
If $n\le 1$, then $M$ is formal, and has the rational homotopy type 
of $S^3$ or $S^1\times S^2$. 
\item \label{f2} 
If $n$ is even, $n\ge 2$, and $\Delta_M\ne 0$, then 
$M$ is not $1$-formal. 
\item \label{f3} 
If $\Delta_M\ne 0$, yet $\Delta_M(\one) =0$  and $\TC_{\one}(V(\Delta_M))$ 
is not a finite union of rationally defined linear subspaces, then $M$ 
admits no $1$-finite $1$-model. 
\end{enumerate}
\end{theorem}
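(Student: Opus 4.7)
For part \eqref{f1}, the plan is to first compute the cohomology ring and then upgrade this to a rational homotopy equivalence via formality. Since $\mu_M\in\bwedge^3(H^1(M,\Q))^*$ and $\dim H^1(M,\Q)\le 1$, the form $\mu_M$ vanishes automatically, so the cup product $H^1\otimes H^1\to H^2$ is trivial. Combined with Poincar\'e duality in dimension $3$, this identifies $H^\hdot(M,\Q)$, as a graded algebra, with $H^\hdot(S^3,\Q)$ when $n=0$ and with $H^\hdot(S^1\times S^2,\Q)$ when $n=1$. For the formality step I would argue that every Massey product of $M$ vanishes: when $n=0$ every potentially defined Massey product either has an input in $H^1=H^2=0$ or lands above the top degree; when $n=1$ the sole well-defined triple Massey product on the generator $a\in H^1$ is $\langle a,a,a\rangle$, and the graded-commutativity antisymmetry relation for odd-degree entries forces $\langle a,a,a\rangle=-\langle a,a,a\rangle$, hence zero over $\Q$. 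Once $M$ is formal, agreement of its cohomology algebra with that of $S^3$ or $S^1\times S^2$ upgrades to agreement of minimal Sullivan models, and hence of rational homotopy types.

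For part \eqref{f2}, the plan is a one-step proof by contradiction. Were $M$ to be $1$-formal, Corollary \ref{cor:tcone}, applied with $i=k=1$, would force $\TC_{\one}(\VV^1_1(M))=\RR^1_1(M)$. But when $n$ is even and $n\ge 2$, Theorem \ref{thm:tc 3d}\eqref{even} characterizes this equality as occurring precisely when $\Delta_M=0$, which contradicts our hypothesis $\Delta_M\ne 0$. Hence $M$ cannot be $1$-formal.

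For part \eqref{f3}, I would push the obstruction one step further, to the first half of the Tangent Cone formula. Suppose, for a contradiction, that $M$ does admit a $1$-finite $1$-model; then Theorem \ref{thm:tcone-fm} would yield $\tau_{\one}(\VV^1_1(M))=\TC_{\one}(\VV^1_1(M))$. Since $\Delta_M\ne 0$ and $\Delta_M(\one)=0$, Corollary \ref{cor:tcv}\eqref{dv2} identifies $\TC_{\one}(\VV^1_1(M))$ with $\TC_{\one}(V(\Delta_M))$, which by assumption is \emph{not} a finite union of rationally defined linear subspaces of $H^1(M,\C)$. However, as recalled in \S\ref{subsec:exp tc}, $\tau_{\one}$ of any algebraic subset of the character torus is always such a finite union. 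The two objects cannot then coincide, and this contradiction rules out the existence of a $1$-finite $1$-model.

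\textbf{Where the difficulty lies.} Parts \eqref{f2} and \eqref{f3} are essentially one-line reductions once the $3$-dimensional Tangent Cone theorem and the general machinery of \S\ref{sect:tcone} are in hand; the obstructions to $1$-formality and to existence of a $1$-finite $1$-model are exactly encoded, respectively, in the failure of the second and first halves of the Tangent Cone formula, and Corollary \ref{cor:tcv} converts our hypotheses into precisely those failures. The genuine work sits in part \eqref{f1}: passing from isomorphism of cohomology rings to a rational homotopy equivalence with $S^3$ or $S^1\times S^2$ requires full formality of $M$, i.e.\ vanishing of all higher Massey products. In the simply connected or finite-$\pi_1$ setting this is automatic, but for $n=0$ with a general (possibly aspherical hyperbolic) fundamental group some additional care---either a direct inspection of the higher Massey products, or an appeal to a dedicated $3$-manifold formality result---will be needed to secure the rational homotopy type statement in the stated generality.
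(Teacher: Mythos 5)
Your treatments of parts \eqref{f2} and \eqref{f3} coincide with the paper's proof. Part \eqref{f2} is exactly the combination of Corollary \ref{cor:tcone} (applied with $q=i=k=1$) with Theorem \ref{thm:tc 3d}\eqref{even}, and part \eqref{f3} is exactly the combination of Theorem \ref{thm:tcone-fm} with Corollary \ref{cor:tcv}\eqref{dv2} and the fact, recalled in \S\ref{subsec:exp tc}, that the exponential tangent cone of any subvariety of the character torus is a finite union of rationally defined linear subspaces. Both reductions are correct as stated.

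Part \eqref{f1} is where you depart from the paper, which simply quotes \cite[Proposition 1]{FM05} and \cite[Theorem 4.2]{PS-formal} for the formality and the rational homotopy type. Your computation of the cohomology ring is fine, and so is the last step (two formal spaces with isomorphic cohomology algebras are weakly equivalent, and $S^3$ and $S^1\times S^2$ are formal). The gap is the formality step itself: vanishing of Massey products---even of all higher-order and matric ones---is necessary but \emph{not} sufficient for formality, since formality requires a uniform, compatible choice of vanishing; there are standard examples of non-formal spaces all of whose Massey products vanish. Moreover, you only inspect triple products, whereas higher-order products in principle also obstruct. (As an aside, $\langle a,a,a\rangle$ vanishes for the simpler reason that a cocycle representative $\alpha$ of the odd-degree class $a$ satisfies $\alpha^2=0$ in the $\cdga$ $A_{\PL}(M)$, so one may take the defining system to be zero, and the indeterminacy $aH^1+H^1a$ is zero as well.) The case at hand can be closed by a direct computation of the minimal Sullivan model: for $n=0$ it is $(\bigwedge(x_3),0)$, and for $n=1$ it is $\bigwedge(a)$ tensored with the minimal model of $S^2$; in either case the model maps quasi-isomorphically onto the cohomology algebra equipped with the zero differential, which gives formality outright and renders the Massey-product discussion unnecessary. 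Either supply such a computation or fall back on the references the paper cites; as written, the inference from vanishing Massey products to formality does not stand.
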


\begin{proof}
\eqref{f1}
As mentioned previously, any connected CW-complex $X$ with finite $2$-skeleton 
and with $b_1(X)\le 1$ is $1$-formal. On the other hand, if $M$ is a closed, orientable 
$3$-manifold, then $1$-formality is equivalent to formality, see \cite{FM05}. Thus, if 
$b_1(M)=0$ or $1$, then $M$ is formal, and so, as noted in \cite{PS-formal}, $M$ 
must be rationally homotopy equivalent to either $S^3$ or $S^1\times S^2$. 

\eqref{f2} 
Now suppose $b_1(M)$ is even and positive, and $\Delta_M\ne 0$. 
Then, by part \eqref{even} of Theorem \ref{thm:tc 3d}, we have that 
$\TC_{\one}(\VV^1_1(M))\ne  \RR^1_1(M)$. Thus, by 
Corollary \ref{cor:tcone}, $M$ is not $1$-formal. 

\eqref{f3} 
Finally, if $\Delta_M\ne 0$ and $\Delta_M(\one) =0$, then, by Corollary \ref{cor:tcv}, 
$\tau_{\one}(\VV^1_1(M))= \tau_{\one}(V(\Delta_M)) $ and 
$\TC_{\one}(\VV^1_1(M))= \TC_{\one}(V(\Delta_M))$.
On the other hand, if not all the irreducible components of 
$\TC_{\one}(V(\Delta_M))$ are linear subspaces defined over $\Q$, 
then, by Lemma \ref{lem:exp-tcone}, 
$\tau_{\one}(V(\Delta_M))\ne \TC_{\one}(V(\Delta_M))$. 
Therefore, if both assumptions are satisfied, 
$\tau_{\one}(\VV^1_1(M))$ is a proper subset 
of $\TC_{\one}(\VV^1_1(M))$, 
and so, by Theorem \ref{thm:tcone-fm}, 
$M$ cannot have a $1$-finite $1$-model. 
\end{proof}

Now let $\pi=\pi_1(M)$ be the fundamental group of $M$, 
and let $\m=\m(\pi)$ be its Malcev Lie algebra.  In the 
three cases treated in Theorem \ref{thm:even betti1}, 
the following hold:
\begin{enumerate}
\item[\eqref{f1}]  \label{m1}
$\m=0$ (if $n=0$) or $\m=\Q$ (if $n=1$).
\item[\eqref{f2}]   \label{m2}
$\m$ is not the LCS completion of a finitely generated, quadratic Lie algebra.
\item[\eqref{f3}]   \label{m3}
$\m$ is not the LCS completion of a finitely presented Lie algebra.  
\end{enumerate}
\subsection{Discussion and examples}
\label{subsec:discuss}

If $b_1(M)=2$, then all three possibilities laid out 
in Corollary \ref{cor:tcv} do occur.

\begin{example}
\label{fm}
Let $M=S^1\times S^2 \# S^1\times S^2$;  
then $\Delta_M= 0$, and so 
$\TC_{\one}(\VV^1_1(M))=\RR^1_1(M)=\C^2$.
Clearly, the manifold $M$ is formal. 
\hfill $\Diamond$
\end{example}

\begin{example}
\label{ex:notfm-finite}
Let $M$ be the Heisenberg $3$-dimensional nilmanifold;  
then $\Delta_M=1$ and $\mu_M=0$, and so 
$\TC_{\one}(\VV^1_1(M))=\{\zero\}$,  whereas $\RR^1_1(M)=\C^2$. 
The manifold $M$ admits a finite model, namely, $A=\bigwedge(a,b,c)$ 
with $\D a=\D b=0$ and $\D c=ab$, but $M$ is not $1$-formal. 
\hfill $\Diamond$
\end{example}

\begin{example}
\label{ex:finite}
Consider the symmetric Laurent polynomial 
$\lambda=(t_1+t_2)(t_1t_2+1)-4t_1t_2$.  By the discussion from 
\S\ref{subsec:alex 3d}, there is a closed, orientable $3$-manifold 
$M$ with $H_1(M,\Z)=\Z^2$ and $\Delta_M=\lambda$.  It is readily 
seen that $\tau_{\one}(\VV^1_1(M))=\{\zero\}$, which is a proper 
subset of $\TC_{\one}(\VV^1_1(M))=\{x_1^2+x_2^2=0\}$. Note 
that the latter variety decomposes as the union of two lines defined 
over $\C$, but not over $\Q$; hence, $M$ admits no $1$-finite $1$-model. 
\hfill $\Diamond$
\end{example}

Now consider the case when $n=b_1(M)$ is odd and at least $3$, and 
$\mu_M$ is not generic, a case which is not covered by Theorem \ref{thm:tc 3d}.  
In this situation, $\RR^1_1(M)=H^1(M,\C)$, by Theorem \ref{thm:res closed3m}, 
while the equality $\TC_{\one}(\VV^1_1(M))=\RR^1_1(M)$ 
may or may not hold.  For instance, if $M$ is the connected sum of 
$n$ copies of $S^1\times S^2$, then $\mu_M=0$ is not generic, 
yet the aforementioned equality holds (see Corollary \ref{cor:cjl conn} 
below for a more general instance of this phenomenon).  On the other 
hand, as we shall see in Example \ref{ex:bdry gen}, there are $3$-manifolds 
$M$ with $n=15,  21, 45, 55, 91, \ldots$ for which $\mu_M$ is not generic,  
while $\TC_{\one}(\VV^1_1(M))$ is a proper subset of $\RR^1_1(M)$. 

\section{Connected sums}
\label{sect:resconn}

Let $M=M_1\,\#\, M_2$ be the connected sum of two closed, orientable 
manifolds of dimension $m\ge 3$.  By the van Kampen theorem, the fundamental 
group of $M$ splits as a free product, $\pi_1(M)=\pi_1(M_1)*\pi_1(M_2)$, 
from which we get a direct product decomposition of the corresponding 
character tori,
\begin{equation}
\label{eq:char-conn}
\Char(\pi_1(M))=\Char(\pi_1(M_1))\times \Char(\pi_1(M_2)).
\end{equation}

Likewise, we have that $H^1(M,\C)=H^1(M_1,\C)\times H^1(M_2,\C)$.
The next result describes the behavior of the cohomology 
jump loci under these decompositions. 

\begin{theorem}
\label{thm:jump conn}
Let $M=M_1\,\#\, M_2$ be the connected sum of two closed, orientable, smooth 
$m$-manifolds, $m\ge 3$.  Then,  for $i=1$ or $m-1$ and for all $k\ge 0$, 
\begin{align*}
\label{eq:v1r1conn}
 \VV^i_k(M)&=
\bigcup\limits_{r+s=k-1} \VV_r^i(M_1) \times \VV_s^i(M_2),
 &\RR^i_k(M)&=
\bigcup\limits_{r+s=k-1} \RR_r^i(M_1) \times \RR_s^i(M_2),\\
\intertext{while, for $1<i<m$,}
\VV^i_k(M)&=
\bigcup\limits_{r+s=k} \VV_r^i(M_1) \times \VV_s^i(M_2),
 &\RR^i_k(M)&=
\bigcup\limits_{r+s=k} \RR_r^i(M_1) \times \RR_s^i(M_2).
\end{align*}
\end{theorem}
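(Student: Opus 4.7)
The natural approach is to apply Mayer--Vietoris to the decomposition $M=M_1^\circ\cup_{S^{m-1}}M_2^\circ$, where each $M_j^\circ$ is $M_j$ with the interior of an embedded closed $m$-disk removed. Since $m\ge 3$, the common boundary $S^{m-1}$ is simply connected, so van Kampen's theorem gives $\pi_1(M)\cong\pi_1(M_1)*\pi_1(M_2)$, hence $\Char(M)\cong\Char(M_1)\times\Char(M_2)$, and every character $\rho$ of $\pi_1(M)$ is specified by a pair $(\rho_1,\rho_2)$ and restricts trivially to $\pi_1(S^{m-1})$. A preliminary excision argument applied to the pair $(M_j,M_j^\circ)$ shows that, for any local system $\C_{\rho_j}$, the relative homology $H_*(M_j,M_j^\circ;\C_{\rho_j})$ is concentrated in degree $m$, so the inclusion induces isomorphisms $H_i(M_j^\circ,\C_{\rho_j})\cong H_i(M_j,\C_{\rho_j})$ for $i\le m-2$.

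With these ingredients in hand, I would read off the Mayer--Vietoris long exact sequence for $M$ with coefficients in $\C_\rho$ in each degree. The sphere contributes $H_j(S^{m-1},\C)=\C$ only in degrees $j=0$ and $j=m-1$, so in the intermediate range $1<i<m-1$ the sequence splits cleanly to yield $H_i(M,\C_\rho)\cong H_i(M_1,\C_{\rho_1})\oplus H_i(M_2,\C_{\rho_2})$, from which the formula $\VV^i_k(M)=\bigcup_{r+s=k}\VV^i_r(M_1)\times\VV^i_s(M_2)$ follows by reading off when $\dim H_i(M,\C_\rho)\ge k$. For $i=1$ the connecting map targets $H_0(S^{m-1},\C)=\C$, and its kernel contributes an extra one-dimensional summand to $H_1(M,\C_\rho)$ precisely when both $\rho_1$ and $\rho_2$ are nontrivial; this $+1$ correction is exactly what forces the shift $r+s=k-1$ in the degree-one formula. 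The case $i=m-1$ then follows from $i=1$ by Poincar\'e duality, as in Proposition~\ref{prop:cv man}.

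For the resonance varieties the plan is parallel but purely algebraic, exploiting that $\RR^i_k(M)$ depends only on the cohomology algebra $A=H^{\hdot}(M,\C)$. The cup-product structure on $M_1\#M_2$ is classical: $A\cong \C\oplus A_1^+\oplus A_2^+\oplus \C[m]$ as graded vector spaces, with $A_j^+$ the positive-degree part of $H^{\hdot}(M_j,\C)$ truncated below $m$, and with all cross products between positive-degree elements of $A_1^+$ and $A_2^+$ vanishing. Relative to the induced decomposition $H^1(A)=H^1(M_1)\oplus H^1(M_2)$, the Aomoto differential $\delta^i$ from \eqref{eq:univ aomoto} splits block-diagonally, and for $a=(a_1,a_2)$ one finds $\ker\delta^i_a=\ker\delta^i_{a_1}\oplus\ker\delta^i_{a_2}$. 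Repeating the minor-ideal analysis of \S\ref{subsec:eqresvar} yields the advertised union-of-products decomposition of $\RR^i_k(M)$, with the rank jump of $\delta^0_a:A^0\to A^1$ between $a=0$ and $a\ne 0$ playing the role of the topological connecting map and producing the same $r+s=k-1$ shift in degrees $1$ and $m-1$.

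The main obstacle in both halves is the careful bookkeeping of this $+1$ correction in the extremal degrees: topologically, one must analyze the Mayer--Vietoris connecting map case by case according to which of $\rho_1,\rho_2$ is trivial, and algebraically one must track how the rank of $\delta^0_a$ interacts with the block decomposition of $\delta^1_a$ across $H^1(M_1)\oplus H^1(M_2)$. Handling $i=m-1$ is then a uniform consequence of Poincar\'e duality, $\VV^i_k(M)\cong\VV^{m-i}_k(M)$, together with its purely cohomological analog for $\RR^i_k$.
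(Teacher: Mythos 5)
The paper itself gives essentially no argument here: it cites \cite[Prop.~5.4]{Su-poinres} for the resonance statement and asserts that ``a completely similar proof works'' for the characteristic varieties. Your Mayer--Vietoris strategy, the identification $\Char(M)\cong\Char(M_1)\times\Char(M_2)$, the excision step, the clean splitting in the middle degrees $1<i<m-1$, and the block-diagonal structure of $\delta^1_a$ on the cohomology ring of a connected sum are all correct and are surely what is intended. The problem is the step you yourself flag as ``the main obstacle'': the bookkeeping of the $+1$ in degree $1$. Your own computation gives
\[
\dim H_1(M,\C_{\rho})=\dim H_1(M_1,\C_{\rho_1})+\dim H_1(M_2,\C_{\rho_2})+\epsilon,
\qquad
\epsilon=\begin{cases}1 & \text{if }\rho_1\ne\one\text{ and }\rho_2\ne\one,\\ 0 & \text{otherwise},\end{cases}
\]
and this does \emph{not} translate into the displayed union over $r+s=k-1$: it does so only on the open locus where both $\rho_1$ and $\rho_2$ are nontrivial, while on the locus where (say) $\rho_2=\one$ the membership condition for $\VV^1_k(M)$ is $d_1+b_1(M_2)\ge k$, whereas membership in $\bigcup_{r+s=k-1}\VV^1_r(M_1)\times\VV^1_s(M_2)$ there only requires $d_1+b_1(M_2)\ge k-1$. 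The two conditions differ, so the sentence ``this $+1$ correction is exactly what forces the shift $r+s=k-1$'' is not a proof, and in fact the inference is false as stated.

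To see that this is not a fixable technicality, take $M_1=M_2=S^1\times S^2$ and $k=2$. Then $\pi_1(M)=F_2$ and $\VV^1_2(M)=\{\one\}$ (for $\rho=(1,t)$ with $t\ne 1$ one has $\dim H_1(M,\C_\rho)=1$, by your own formula with $\epsilon=0$ or by a direct Fox calculus computation), whereas $\VV^1_1(M_1)\times\VV^1_0(M_2)=\{1\}\times\C^*$ is contained in the right-hand side of the asserted identity. The same discrepancy occurs on the resonance side at points $(a_1,0)$ with $a_1\ne 0$, where $\dim H^1(A,\delta_a)=1$ while $(a_1,0)\in\RR^1_0(M_1)\times\RR^1_1(M_2)$. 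So the dimension count you set up, once carried through, actually contradicts the identity at depth $k\ge 2$ along the ``axes'' where one factor is trivial; the correct conclusion of your argument is that $\VV^1_k(M)$ agrees with the $r+s=k-1$ union only off those axes, and with the $r+s=k$ union on them. You should either restrict the claim accordingly (e.g.\ to depth $k=1$, where everything you wrote is fine and recovers Corollary~\ref{cor:cjl conn}), or record the case analysis explicitly rather than asserting that the $+1$ ``forces'' a uniform shift. A separate, minor point: the two ranges ``$i=1$ or $m-1$'' and ``$1<i<m$'' overlap at $i=m-1$ and assign different formulas there; your implicit reading ($1<i<m-1$ for the second range, with $i=m-1$ handled by Poincar\'e duality) is the only consistent one.
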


\begin{proof}
The claims involving resonance varieties are proved in 
\cite[Proposition~5.4]{Su-poinres}. 
A completely similar proof works for the characteristic varieties. 
\end{proof}

Staying with the same notation, we obtain the following corollary 
regarding the compatibility of the Tangent Cone formula (at least of 
its second half)  with respect to connected sums. 

\begin{corollary}
\label{lcor:tcone conn}
Suppose that $\TC_{\one}(\VV^i_s(M_j)) = \RR^i_s(M_j)$ 
for $j=1,2$, in some fixed degree $0<i<m$, and in depths $s<k$ 
if $i=1$ or $m-1$, or $s\le k$ otherwise.   Then 
$\TC_{\one}(\VV^i_k(M_1\# M_2)) = \RR^i_k(M_1\# M_2)$.
\end{corollary}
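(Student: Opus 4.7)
The proof plan is to reduce the statement to a direct formal calculation by combining Theorem \ref{thm:jump conn} with two basic functorial properties of the tangent cone at the identity. Namely: (i) $\TC_{\one}$ commutes with finite unions of subvarieties, $\TC_{\one}(A \cup B) = \TC_{\one}(A) \cup \TC_{\one}(B)$, with the convention that $\TC_p(W) = \myempty$ when $p \notin W$; and (ii) the tangent cone of a Cartesian product factorizes as $\TC_{(\one,\one)}(W_1 \times W_2) = \TC_{\one}(W_1) \times \TC_{\one}(W_2)$. Property (i) follows from the fact that tangent cones depend only on the analytic germ at $\one$, and that germs commute with finite unions. Property (ii) follows because defining ideals in disjoint sets of variables have initial ideals that simply add to the initial ideal of their sum.

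Carrying out the computation in the case $1 < i < m-1$ (the cases $i=1$ and $i=m-1$ are identical, with the summation index $r+s=k$ replaced by $r+s=k-1$), Theorem \ref{thm:jump conn} together with (i) and (ii) gives
\begin{equation*}
\TC_{\one}(\VV^i_k(M_1 \# M_2)) = \bigcup_{r+s=k} \TC_{\one}(\VV^i_r(M_1)) \times \TC_{\one}(\VV^i_s(M_2)).
\end{equation*}
For every pair $(r,s)$ in the indexing set we have $0 \le r, s \le k$, so the hypothesis applies and yields $\TC_{\one}(\VV^i_r(M_j)) = \RR^i_r(M_j)$ in each factor. (The boundary values $r=0$ or $s=0$ are handled unconditionally, since $\VV^i_0(M_j) = \Char(M_j)$ has tangent cone $H^1(M_j,\C) = \RR^i_0(M_j)$ at $\one$.) Substituting, and then invoking the resonance half of Theorem \ref{thm:jump conn} in reverse, gives $\TC_{\one}(\VV^i_k(M_1 \# M_2)) = \RR^i_k(M_1 \# M_2)$, as required. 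In the cases $i=1$ and $i=m-1$, the identical argument with $r+s=k-1$ applies, and the range $0\le r,s \le k-1$ is precisely where the stricter hypothesis ($s<k$) provides the required equalities.

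There is no serious obstacle, but one should verify that the degenerate situation $\one \notin \VV^i_r(M_1)$ for some $r$ appearing in the index set causes no trouble: in that case the product $\VV^i_r(M_1) \times \VV^i_s(M_2)$ does not contain $\one$, so its tangent cone at $\one$ is empty, while the hypothesis simultaneously forces $\RR^i_r(M_1)$ to be empty, so the corresponding terms vanish from both sides in parallel. Thus the real content is carried by Theorem \ref{thm:jump conn}; the tangent cone manipulation itself is purely formal.
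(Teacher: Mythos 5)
Your proof is correct and is exactly the argument the paper intends: the corollary is stated there as an immediate consequence of Theorem \ref{thm:jump conn}, obtained by applying the facts that $\TC_{\one}$ commutes with finite unions and with Cartesian products, then substituting the hypothesis termwise (with the index ranges $r+s=k-1$ versus $r+s=k$ matching the depth conditions $s<k$ versus $s\le k$). Your handling of the degenerate case $\one\notin\VV^i_r(M_j)$ is a nice touch, though it is already forced by the hypothesis as you note.
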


In degree $i=1$ and depth $k=1$, Theorem \ref{thm:jump conn} 
yields  another corollary, the  
conclusions of which can also be deduced from \cite[Lemma 9.8]{DPS-duke} 
and \cite[Lemma 5.2]{PS-mathann}, respectively. 

\begin{corollary}
\label{cor:cjl conn}
Let $M=M_1\,\#\, M_2$ be the connected sum of two closed, orientable,  
smooth $m$-manifolds ($m\ge 3$) with $b_1(M_1)$ and $b_1(M_2)$ 
both non-zero.
Then $\VV^1_1(M)=H^1(M,\C^*)$ and $\RR^1_1(M)=H^1(M,\C)$. 
\end{corollary}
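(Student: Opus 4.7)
The plan is to invoke Theorem~\ref{thm:jump conn} directly in degree $i=1$ and depth $k=1$. Since $1\le m-1$ whenever $m\ge 3$, the formula for $i=1$ applies, and the union $\bigcup_{r+s=k-1}$ in that formula collapses, for $k=1$, to the single term $(r,s)=(0,0)$. I therefore expect to extract
\begin{equation*}
\VV^1_1(M)=\VV^1_0(M_1)\times \VV^1_0(M_2)
\quad\text{and}\quad
\RR^1_1(M)=\RR^1_0(M_1)\times \RR^1_0(M_2).
\end{equation*}

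The next step is to identify these products with the full ambient spaces. By the conventions recorded in \S\ref{subsec:cv} and \S\ref{subsec:res} we have $\VV^1_0(M_i)=\Char(M_i)=H^1(M_i,\C^*)$ and $\RR^1_0(M_i)=H^1(M_i,\C)$. Combined with the splitting $\pi_1(M)\cong \pi_1(M_1)\ast \pi_1(M_2)$ given by van Kampen's theorem, which yields both the product decomposition \eqref{eq:char-conn} of character groups and the additive decomposition $H^1(M,\C)=H^1(M_1,\C)\oplus H^1(M_2,\C)$, this will immediately produce the desired equalities $\VV^1_1(M)=H^1(M,\C^*)$ and $\RR^1_1(M)=H^1(M,\C)$.

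There is essentially no obstacle, since all of the real work is packaged into Theorem~\ref{thm:jump conn}. The one point worth flagging is the role of the hypothesis $b_1(M_1),b_1(M_2)>0$: it is this hypothesis that makes each factor $\Char(M_i)=H^1(M_i,\C^*)$ a positive-dimensional algebraic group and each $H^1(M_i,\C)$ a nonzero vector space, so that the right-hand sides of the formulas above genuinely exhaust the ambient varieties rather than collapsing to something smaller. Without it (e.g.\ if $M_i=S^3$), the $(0,0)$ term would shrink and the stated conclusion would fail, as is already implicit in the cited proof of Theorem~\ref{thm:jump conn}.
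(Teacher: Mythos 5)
Your derivation is exactly the paper's: Corollary~\ref{cor:cjl conn} is obtained by specializing Theorem~\ref{thm:jump conn} to $i=1$, $k=1$, where the union collapses to the single term $(r,s)=(0,0)$, and then identifying $\VV^1_0(M_j)=\Char(M_j)$ and $\RR^1_0(M_j)=H^1(M_j,\C)$ via the van Kampen splittings. One caveat about your closing remark: the depth-$0$ loci are the full ambient spaces \emph{by definition}, independently of $b_1(M_j)$, so the $(0,0)$ term never ``shrinks''; the hypothesis $b_1(M_1),b_1(M_2)\neq 0$ is really what guarantees that the product formula of Theorem~\ref{thm:jump conn} applies at depth $k=1$ in the first place (for instance, $M=S^3\,\#\,T^3\cong T^3$ has $\RR^1_1(M)=\{\zero\}$ rather than the $H^1(M,\C)$ that a blind application of the $(0,0)$ term would predict).
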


In particular, the full Tangent Cone formula in this degree and depth, 
$\tau_{\one}(\VV^1_1(M))=\TC_{\one}(\VV^1_1(M)) = \RR^1_1(M)$, 
holds for manifolds which admit a connected sum decomposition  as above. 
Combining this corollary with Lemma \ref{lem:delta cv}, part \eqref{dc1}, 
we obtain the following---presum\-ably well-known---application. 

\begin{corollary}
\label{lem:cjl conn}
Let $M=M_1\,\#\, M_2$ be the connected sum of two closed, 
smooth $m$-manifolds ($m\ge 3$) with non-zero first Betti number.  
Then $\Delta_M=0$. 
\end{corollary}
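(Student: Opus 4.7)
The plan is to deduce the statement as an essentially immediate chain from two results already in hand. First I would invoke Corollary~\ref{cor:cjl conn}: under the given hypotheses, namely that $M=M_1\# M_2$ with $b_1(M_1)$ and $b_1(M_2)$ both positive (which is how I read ``non-zero first Betti number''), that corollary yields
\[
\VV^1_1(M) = H^1(M,\C^*) = \Char(\pi_1(M)).
\]
In particular, the degree-one, depth-one characteristic variety of $M$ fills up the entire character group.

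Next I would restrict this equality to the identity component of $\Char(\pi_1(M))$. With the notation $\ZZ^1_1(M) = \VV^1_1(M) \cap \Char(\pi_1(M))^0$ from \S\ref{subsec:alexchar}, the preceding display immediately gives $\ZZ^1_1(M) = \Char(\pi_1(M))^0$. Observe that $M$, being a closed smooth $m$-manifold, is homotopy equivalent to a finite CW-complex (by Morse theory), so in particular is $2$-finite and the Alexander polynomial $\Delta_M \in \Z[H]$ is well-defined as in \S\ref{subsec:alexpoly}.

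Finally I would apply Lemma~\ref{lem:delta cv}, part \eqref{dc1}, with $k=1$: it asserts precisely that $\Delta_M=0$ if and only if $\ZZ^1_1(M) = \Char(\pi_1(M))^0$. Combined with the previous step, this gives $\Delta_M = 0$, as claimed.

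There is no substantive obstacle here, which is consistent with the author's parenthetical ``presumably well-known'': the proof is a two-line deduction, and the real content lives in Theorem~\ref{thm:jump conn} / Corollary~\ref{cor:cjl conn} (for the behavior of $\VV^1_1$ under connected sum) and in Lemma~\ref{lem:delta cv} (for the translation between vanishing of $\Delta_M$ and the identity component of $\VV^1_1$ being the whole torus). The only bookkeeping point to be careful about is making sure the hypothesis is applied to each summand individually, so that Corollary~\ref{cor:cjl conn} is genuinely available.
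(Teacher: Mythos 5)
Your proposal is correct and follows exactly the route the paper takes: the paper derives this corollary by combining Corollary~\ref{cor:cjl conn} (which gives $\VV^1_1(M)=H^1(M,\C^*)$, hence $\ZZ^1_1(M)=\Char(\pi_1(M))^0$) with Lemma~\ref{lem:delta cv}, part~\eqref{dc1}, applied with $k=1$. Your bookkeeping remark about applying the Betti-number hypothesis to each summand individually is exactly the right reading of the statement.
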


A classical theorem of J.~Milnor insures that every closed, orientable 
$3$-manifold decomposes as the connected sum of finitely many 
irreducible $3$-manifolds. Since every $3$-manifold is smooth, 
Theorem \ref{thm:jump conn}
reduces the computation of the cohomology jump loci of arbitrary 
closed, orientable $3$-manifolds to that of irreducible ones.

\section{Graph manifolds}  
\label{sect:graphman}

In this section we study in more detail the cohomology jump loci 
and the formality properties of certain classes of graph manifolds.  
We start with a look at the Seifert fibered spaces, which are the basic 
building blocks for such manifolds. 

\subsection{Seifert manifolds}
\label{subsec:seifert}

A compact $3$-manifold is a Seifert fibered space if and only if it is 
foliated by circles. One can think of such a manifold $M$ as a  
bundle in the category of orbifolds, in which the circles of the 
foliation are the fibers, and the base space of the orbifold bundle is the 
quotient space of $M$ obtained by identifying each circle to a point. 
We refer to \cite{Sc} as a general reference for the subject. 

For our purposes here, we will only consider closed, orientable Seifert manifolds 
with orientable base.   Every such manifold $M$ admits an effective circle action, 
with orbit space a Riemann surface $\Sigma_g$, and finitely many 
exceptional orbits, encoded in pairs of coprime integers $(\alpha_1,\beta_1), 
\dots,  (\alpha_s,\beta_s)$ with $\alpha_j\ge 2$. The fundamental group  
$\pi=\pi_1(M)$ admits a presentation of the form 
\begin{equation}
\label{eq:seifert-pi1}
 \begin{split}
\pi&=\big\langle x_1, y_1,\dots, x_g, y_g, z_1,\dots,z_s, h\mid  
\text{$h$ central}, \\
&\qquad [x_1,y_1]\cdots[x_g,y_g]z_1\cdots z_s=h^{b},\:\: 
 z_1^{\alpha_1}h^{\beta_1}=\cdots =  z_s^{\alpha_s}h^{\beta_s} =1\big\rangle,
 \end{split}
 \end{equation}
where the integer $b$ encodes the obstruction to trivializing 
the bundle $p\colon M\to \Sigma_g$ outside tubular neighborhoods 
of the exceptional orbits.  

Let $e=-\big(b+\sum_{i=1}^{s}\beta_i/\alpha_i\big)$ be 
the Euler number of the orbifold bundle. 
If $g=0$, then $b_1(M)= 0$ or $1$, according to whether $e\ne 0$  
or $0$; therefore, by Theorem \ref{thm:even betti1}\eqref{f1}, 
the manifold $M$ is formal.  So let us assume 
that $g>0$.  Then $M$ admits a finite-dimensional  model, 
$A = \big( H^{\hdot} (\Sigma; \Q) \otimes_{\Q} \bwedge (c), \D \big)$, 
where $\deg c=1$ and the differential $\D$ is defined as follows: 
$\D=0$ on $H^{\hdot} (\Sigma; \Q)$, while $\D c=0$ if $e=0$  
and $\D c=\omega$, where $\omega\in H^{2} (\Sigma; \Q)$ 
is the orientation class, otherwise.  As shown in \cite{PS-imrn, SW},  
the Malcev Lie algebra $\m(\pi)$ is the LCS completion of graded 
algebra with relations in degrees $2$ and $3$; furthermore, $\pi$ is 
$1$-formal if and only if $e=0$. 

The simplest Seifert manifold with $e=0$ is the product 
$M=\Sigma_g \times S^1$ ($g\ge 1$) from Example \ref{ex:surf};  
in this case, $\VV^1_k(M)=\{t\in (\C^*)^{2g+1} \mid t_{2g+1}=1\}$ 
for $1\le k\le 2g-2$ and $\VV^1_{2g-1}(M)=\{\one\}$. 
On the other hand, if $e\ne 0$, then, as shown in \cite{PS-imrn}, 
the morphism  $p^*\colon H^1(\Sigma_g,\C^*)\to H^1(M,\C^*)$ 
induced by the orbit map $p\colon M\to \Sigma_g$ 
defines an isomorphism of analytic germs, 
$\VV^1_k (\Sigma_g)_{(\one)}\cong \VV^1_k (M)_{(\one)}$, 
for each $k\ge 0$. 

On the other hand, if $H_1(M,\Z)$ has torsion, then the corresponding 
connected components of $H^1(M,\C^*)$ may contain irreducible 
components of $\VV^1_1(M)$ which do not pass through $\one$.   
Here is a concrete such example, extracted from  \cite{DPS-imrn, SYZ}.

\begin{example}
\label{ex:sigma248}
Consider the Brieskorn manifold $M=\Sigma(2,4,8)$.
Then $H_1(M,\Z)=\Z^2 \oplus \Z_4$, and so 
$\Char(M)= (\C^*)^2 \times \{\pm 1, \pm i\}$.  
Direct computation shows that $\Delta_M=1$, and so 
$\ZZ^1_1(M)=\{\one\}$, whereas 
$\VV^1_1(M) = \{\one\} \cup  (\C^*)^2 \times \{-1\}$.
\hfill $\Diamond$
\end{example}

\subsection{Tree graph-manifolds}
\label{subsec:tree-gm}

Every irreducible closed, orientable $3$-manifold $M$ 
admits a JSJ decomposition along  incompressible tori. 
That is to say, there is a finite collection of subtori $T$ with product  
neighborhood $N(T)$, such that each connected component of 
$M\setminus N(T)$ is irreducible. 
A closed, orientable $3$-manifold $M$ is a {\em graph-manifold}\/ if its 
JSJ decomposition consists only of Seifert fibered pieces. 
Associated to such a manifold there is a graph $\Gamma=(V,E)$ 
with a vertex $v$ for each component $M_v$ of $M\setminus N(T)$, and 
with an edge $e=\{v,w\}$ whenever $M_v$ and $M_w$ are glued 
along a torus $T_e$ from $T$. 

In \cite{DH}, Doig and Horn provide an algorithm for 
computing the rational cohomology ring of a closed, orientable 
graph manifold $M$.  For instance, if $M$ is a {\em tree 
graph-manifold}\/ (that is, the underlying graph $\Gamma$ is a tree), 
and all closed-up base surfaces $\Sigma_v$ are orientable, then 
\begin{equation}
\label{eq:coho-tree}
H^{\hdot}(M;\Q)\cong \connsum_{v\in V} H^{\hdot}(\Sigma_v \times S^1;\Q). 
\end{equation}
Thus, if we let $g_v$ be the genus of $\Sigma_v$, the intersection 
form of $M$ can be written, in a suitable basis for $H^1(M,\Z)$, as 
\begin{equation}
\label{eq:mu-tree}
\mu_M=\sum_{v\in V} \sum_{i=1}^{g_v} a_{v,i} b_{v,i} c_v \,.
\end{equation} 

\begin{proposition}
\label{prop:res tree-man}
Let $M$ be a tree graph-manifold with orientable base surfaces.  
Then the resonance varieties $\RR^i_k(M)$ are either empty, 
or equal to $H^1(M,\C)$, or are finite 
unions of coordinate subspaces in $H^1(M,\C)$. 
\end{proposition}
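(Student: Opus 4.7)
The plan is to reduce the computation of $\RR^i_k(M)$ to the building blocks $N_v := \Sigma_{g_v}\times S^1$ by means of the ring-level connected sum decomposition \eqref{eq:coho-tree}, and then to observe that each piece contributes only coordinate subspaces. First, since the resonance varieties of a space depend solely on its cohomology algebra (viewed as a \cga{} with zero differential), the algebra isomorphism $H^{\hdot}(M;\Q)\cong \connsum_{v\in V} H^{\hdot}(N_v;\Q)$ lets me replace $M$ by $\connsum_v N_v$ for the purpose of computing $\RR^i_k$. Iterating Theorem~\ref{thm:jump conn} along the tree $\Gamma$ then expresses $\RR^i_k(M)$ as a finite union of products $\prod_{v\in V}\RR^i_{r_v}(N_v)$, where $(r_v)$ runs over suitable compositions of $k$ or $k-1$, depending on whether $i\in\{1,2\}$ or $i$ is a middle degree (here $m=3$, so only $i\in\{0,1,2,3\}$ arise).

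Next I would identify the resonance varieties of each building block. For $g_v\ge 1$, the manifold $N_v=\Sigma_{g_v}\times S^1$ is the one from Example~\ref{ex:surf}: in degree $i=1$ (and therefore also $i=2$, using $\RR^2_k=\RR^1_k$ from Corollary~\ref{cor:cv3d} together with Poincar\'e duality), $\RR^1_k(N_v)$ equals $H^1(N_v,\C)$ for $k=0$, the coordinate hyperplane $\{c_v=0\}$ for $1\le k\le 2g_v-2$, the origin $\{\zero\}$ for $2g_v-1\le k\le 2g_v+1$, and is empty for $k\ge 2g_v+2$. In degrees $0$ and $3$ one has $\RR^i_1=\{\zero\}$ and $\RR^i_k=\myempty$ otherwise. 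For $g_v=0$, $N_v=S^2\times S^1$ and a direct Koszul computation on $\bwedge(c)\otimes \Q[x]/(x^2)$ yields the same trichotomy with only $\emptyset$, $\{\zero\}$, and $H^1(N_v,\C)$ appearing. In every case $\RR^i_k(N_v)$ belongs to the list $\{\myempty,\,\{\zero\},\,\{c_v=0\},\,H^1(N_v,\C)\}$.

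Finally I would assemble the pieces. In the basis $\{a_{v,i},b_{v,i},c_v\}_{v,i}$ of $H^1(M,\C)=\bigoplus_v H^1(N_v,\C)$, each factor $\RR^i_{r_v}(N_v)$ either kills the whole $v$-block (the $\{\zero\}$ case), imposes the single coordinate equation $c_v=0$ on it, imposes no constraint, or makes the entire product empty. Such a product is therefore a coordinate subspace of $H^1(M,\C)$, and a finite union of coordinate subspaces is by definition a coordinate subspace arrangement. The trichotomy in the statement is then just the observation that the two degenerate cases --- the empty arrangement and the arrangement equal to the whole ambient space --- are naturally singled out from the others.

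The only real obstacle in this plan is the bookkeeping required to iterate Theorem~\ref{thm:jump conn} along the tree, since the depth shift by $1$ in the case $i\in\{1,m-1\}$ propagates differently from the middle-degree case as one grafts on successive Seifert pieces. However, because each piece can only contribute data of one of the four types above, this combinatorial accounting never affects the qualitative conclusion: only the precise list of coordinate subspaces comprising $\RR^i_k(M)$ depends on the choice of tree and on the genera $g_v$.
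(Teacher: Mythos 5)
Your argument is correct and is essentially the paper's own proof, spelled out in more detail: both rest on the block computation of Example~\ref{ex:surf} (plus the trivial case $g_v=0$), the ring isomorphism \eqref{eq:coho-tree}, and the connected-sum formula of Theorem~\ref{thm:jump conn}. One small slip: the equality $\RR^{2}_k(M)=\RR^1_k(M)$ comes from \S\ref{subsec:res3d}, quoting \cite[Prop.~6.1]{Su-poinres}, not from Corollary~\ref{cor:cv3d}, which concerns characteristic varieties.
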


\begin{proof}

By the computation from Example \ref{ex:surf}, we know that 
the claim is true when $M=\Sigma_g \times S^1$.  
The general case follows at once from \eqref{eq:coho-tree} 
and Theorem \ref{thm:jump conn}.
\end{proof}

\begin{example}
\label{ex:DH}
The main result of \cite{DH} is Theorem 6.1, which states 
that not every closed $3$-manifold is homology cobordant 
to a tree graph manifold. The proof reduces to showing that the
intersection forms $\mu=e_1e_2e_3+e_1e_5e_6+e_2e_4e_5$ and  
$\mu'=e_1e_2e_3+e_4e_5e_6$ are not equivalent, up to a change 
of basis in $\GL(6,\Q)$.   This is done in \cite[Theorem~6.4]{DH} 
by a rather long  argument; here is a much shorter proof of this fact.  

A computation recorded in \cite{Su-poinres} shows that 
$\RR^1_2(\mu)  = \{x_1=x_2=x_5=0\}$; on the other hand, 
by Theorem \ref{thm:jump conn},
$\RR^1_2(\mu')  = \{x_1=x_2=x_3=0\} \cup  \{x_4=x_5=x_6=0\}$. 
Thus, the respective resonance varieties are not isomorphic, 
and hence the two intersection forms are not equivalent (over $\Q$). 
\hfill$\Diamond$
\end{example}

\subsection{Boundary manifolds of line arrangements}
\label{subsec:bdry}

Let $\A=\{\ell_0,\dots , \ell_n\}$ be an arrangement of 
projective lines in $\CP^2$.  We associate to $\A$ a graph 
$\Gamma=(V,E)$, with vertex set $V=\A\cup \PP$, where $\PP$ 
are the points $P_J=\bigcap_{j\in J} \ell_j$ where three or more 
lines intersect. The graph $\Gamma$ has an edge from $\ell_i$ to $\ell_j$ if 
those lines  are transverse, and an edge from a multiple point $P$ to 
each line $\ell_i$ on which it lies.

Now let $M=M(\A)$ be the boundary of a regular 
neighborhood of $\A$. Then $M$ is a closed, orientable graph 
manifold, with underlying graph $\Gamma$; 
the vertex manifolds $M_v$ are of the 
form $S^1\times S_v$, where $S_v$ is the $2$-sphere with 
$\deg(v)$ open disks removed, and all the gluing maps 
are flips, i.e., diffeomorphisms of the boundary tori  
given by the matrix $J=\left(\begin{smallmatrix}0&1\\1&0\end{smallmatrix}\right)$.

For instance, if  $\A$ is a pencil of lines defined by 
$\{z_1^{n+1}-z_2^{n+1}=0\}$, then $M=\connsum^{n} S^{1}\times S^{2}$, 
whereas if $\A$ is a near-pencil  defined by
$\{z_0(z_1^n-z_2^n)=0\}$, then $M=S^1\times\Sigma_{n-1}$. 

The group $H_1(M,\Z)$ is free abelian, of rank equal to $n+b_1(\Gamma)$.  
We fix a basis for $H^1(M,\Z)$, consisting of classes  $e_i$ 
dual to the meridians of the lines  $\ell_1,\dots , \ell_n$, as well as classes 
$f_{i,j}$ dual to the cycles in the graph. The latter classes are indexed 
by the set $B$  of pairs $(i,j)$ with $i<j$ for which 
either $\ell_i \pitchfork \ell_j$, 
or $i=\min J$ and $j\in J\setminus \{i\}$, where $P_J\in \PP$. 
As shown in \cite{CS08}, the intersection $3$-form of $M$ 
may then be written as 
\begin{equation}
\label{eq:mu-bdry}
\mu_M=\sum_{(i,j)\in B} e_{I(i,j)} e_j f_{i,j}\, , 
\end{equation} 
where $I(i,j)=\{k \in [n]\mid  \ell_i\cap \ell_j \in \ell_k\}$ 
and $e_J=\sum_{k\in J} e_k$. 

\begin{theorem}[\cite{CS06, CS08}]
\label{thm:res arr}
If $n\ge 2$ and $\A$ is not a near-pencil, then $\RR^1_1( M)=H^1(M,\C)$. 
\end{theorem}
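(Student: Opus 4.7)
The plan is to invoke Theorem \ref{thm:res closed3m}. Since $M = M(\A)$ is a closed, orientable $3$-manifold with $b_1(M) = n + b_1(\Gamma) \geq n \geq 2$, that theorem reduces the claim to showing: whenever $b_1(M)$ is odd, the triple cup-product form $\mu_M$ is not generic. So the goal is to rule out genericity of $\mu_M$ whenever $\A$ is not a near-pencil.

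The structural input is formula \eqref{eq:mu-bdry}, $\mu_M = \sum_{(i,j)\in B} e_{I(i,j)}\, e_j\, f_{i,j}$, which is linear in each of the variables $f_{i,j}$. It follows at once that the subspace $V := \spn_{\C}\{f_{i,j} : (i,j)\in B\} \subseteq H^1(M,\C)$ is totally isotropic in the sense that $\mu_M(v\wedge v'\wedge z) = 0$ for all $v,v'\in V$ and all $z\in H^1(M,\C)$. Hence, for every $c$, the alternating $2$-form $\gamma_c(b,b') := \mu_M(c\wedge b\wedge b')$ vanishes on $V\times V$, and the standard rank bound for an alternating form with an isotropic subspace of dimension $|B|$ in an $m$-dimensional ambient space gives $\rk\gamma_c \leq 2(m-|B|) = 2n$, where $m = b_1(M)$. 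In view of \eqref{eq:r1theta} together with Lemma \ref{lem:turaev}, the non-genericity condition $\Pf(\mu_M)\equiv 0$ amounts to $\rk\gamma_c \leq m-3$ for every $c$.

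The main task, and the main obstacle, is to sharpen the bound $\rk\gamma_c \leq 2n$ to $\rk\gamma_c \leq m-3$ under the assumption that $\A$ is not a near-pencil. Here one uses the combinatorics of the multiple points: for each $P_J\in\PP$, the coefficient $e_{I(i,j)}$ in \eqref{eq:mu-bdry} depends only on $P_J$, not on the specific pair $(i,j)$ with $\ell_i\cap\ell_j = P_J$, and this coincidence introduces dependent rows and columns in the matrix of $\gamma_c$ that force additional rank drops. A case analysis along these lines should show that outside of near-pencils these drops always accumulate to the required amount. Near-pencils are precisely the extremal configurations, since a near-pencil with $n+1$ lines gives $M\cong S^1\times\Sigma_{n-1}$, for which $\mu_M$ is generic by Example \ref{ex:surf}. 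An alternative, and perhaps cleaner, route (followed in \cite{CS06, CS08}) bypasses Theorem \ref{thm:res closed3m} altogether: for each nonzero $a\in H^1(M,\C)$, one directly produces a linearly independent $b$ with $a\cdot b = 0$ by exploiting cup products of the form $e_i\cdot e_j$, $e_i\cdot f_{k,l}$, and $f_{i,j}\cdot f_{k,l}$, with the arrangement combinatorics dictating the choice of $b$ in each case.
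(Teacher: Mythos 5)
Your reduction is legitimate as far as it goes: since $M=M(\A)$ is a closed, orientable $3$-manifold with $b_1(M)\ge n\ge 2$, Theorem \ref{thm:res closed3m} does reduce the claim to showing that $\mu_M$ is \emph{not} generic whenever $b_1(M)$ is odd, and your observation that every monomial of \eqref{eq:mu-bdry} contains exactly one $f$-variable correctly makes $V=\spn\{f_{i,j}\}$ isotropic for every $\gamma_c$, giving $\rk\gamma_c\le 2n$. But this is where the argument stops being a proof. The bound $\rk\gamma_c\le 2n$ forces $\rk\gamma_c\le b_1(M)-3$ only when $b_1(\Gamma)=\abs{B}\ge n+3$, and that inequality fails precisely for arrangements whose multiple points have high multiplicity. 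A concrete instance: five lines with a single triple point (and otherwise only double points) is not a near-pencil, yet has $b_1(\Gamma)=5$ and $b_1(M)=9$, so $2n=8=b_1(M)-1$ is exactly the generic rank and your bound excludes nothing. The step you defer to ``a case analysis along these lines should show'' is therefore the entire content of the theorem in such cases, and it is not supplied. It is worth noting that the paper runs this implication in the \emph{opposite} direction: in Example \ref{ex:bdry gen}, non-genericity of $\mu_M$ for general-position arrangements with odd $b_1(M)$ is \emph{deduced} from Theorem \ref{thm:res arr} together with Theorem \ref{thm:tc 3d}\eqref{odd}, which strongly suggests that establishing non-genericity directly is no easier than the statement you are trying to prove.

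For the record, the paper gives no proof of this statement; it is quoted from \cite{CS06, CS08}, where the argument is essentially the ``alternative route'' you mention at the end --- for each nonzero $a\in H^1(M,\C)$ one directly exhibits an element $b$, not proportional to $a$, with $a\cdot b=0$, using the cup-product relations encoded in \eqref{eq:mu-bdry} and the combinatorics of $\Gamma$. That route would indeed bypass the genericity question entirely, but in your proposal it too is only gestured at, not carried out. As written, the proposal is a plan with its central step missing rather than a proof.
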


In depth $k>1$, though, the resonance varieties $\RR^1_k(M)$ may have 
non-linear irreducible components. 

The next result expresses the Alexander polynomial and the first 
characteristic variety of the boundary manifold $M=M(\A)$ in 
terms of the underlying graph  $\Gamma=(V,E)$.

\begin{theorem}[\cite{CS08}]
\label{thm:alex poly arr}
If $\A$ is not a pencil, then
\begin{enumerate} 
\item
$\Delta_{M} = \prod_{v \in V} (t_v-1)^{\deg(v)-2} \in 
\Z[t_1^{\pm 1},\dots,  t_n^{\pm 1}]$, 
where  $t_v=\prod_{i\in v} t_i$ and $t_0\cdots t_n=1$. 
\\[-8pt]
\item 
$\VV^1_1(M) = \bigcup_{v \in V\, :\, \deg(v)\ge 3} 
\set{t_v-1=0}$.
\end{enumerate}
\end{theorem}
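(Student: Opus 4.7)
The plan is to prove part~(1) by an explicit Fox-calculus computation starting from a presentation of $\pi=\pi_1(M)$ coming from the graph-manifold decomposition $M=\bigcup_v M_v$, and then to deduce part~(2) from part~(1) together with the general formula from Proposition~\ref{prop:cv 3d}.

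First I would record the presentation of $\pi$ obtained from van Kampen's theorem: each vertex piece $M_v=S^1\times S_v$ contributes a fiber generator $\gamma_v$ (whose image in $H$ is $t_v$) together with $\deg(v)-1$ boundary meridians $\mu_{v,e}$, subject to the commutator relators $[\gamma_v,\mu_{v,e}]=1$; each edge torus $T_e$ contributes relators that interchange (fiber, meridian) on one side with (meridian, fiber) on the other via the flip matrix $J$. Taking abelianized Fox derivatives, the commutator relators yield rows whose entries lie in the ideals generated by $t_v-1$ and $t_{\mu_{v,e}}-1$, while the linear flip-gluing relators allow row-column reduction over $\Z[H]$ that eliminates all meridian variables in favor of the $t_v$. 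After this reduction the Alexander matrix $\partial_2^{\ab}$ becomes, up to units, block-diagonal with one block per vertex; a direct determinantal computation identifies the $v$-block with the Alexander matrix of the pair $(S^1\times S_v,\partial)$, whose gcd of codimension-one minors equals $(t_v-1)^{\deg(v)-2}$. The exponent $\deg(v)-2$ reflects the combined loss of one factor $t_v-1$ from the centrality of the $S^1$-fiber and another absorbed into the augmentation factor $I_H^2$ appearing in McMullen's formula \eqref{eq:mcm}. Taking the gcd across vertices yields part~(1).

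For part~(2), the torsion-freeness of $H_1(M;\Z)$ gives $\Char(M)=\Char(M)^0$, so $\VV^1_1(M)=\ZZ^1_1(M)$; Proposition~\ref{prop:cv 3d} then yields $\ZZ^1_1(M)=V(\Delta_M)\cup\{\one\}$. Since $\A$ is not a pencil, the graph $\Gamma$ has at least one vertex of degree $\ge 3$, so $\one\in V(\Delta_M)$ by part~(1). Decomposing $V(\Delta_M)$ according to the product formula of part~(1) and discarding the trivial factors coming from vertices of degree $2$ produces the stated union of hypersurfaces.

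The main difficulty is the Fox-calculus reduction above: bookkeeping the flipped gluings at each edge of $\Gamma$ and showing that after row and column operations the matrix splits cleanly into the claimed block-diagonal form with exactly the right local exponents, without spurious cross-terms between pieces. Should this direct combinatorial computation become unwieldy, a cleaner alternative is a Mayer--Vietoris argument on the maximal free abelian cover $M^H\to M$: each vertex piece $M_v$ lifts to a disjoint union of copies whose Alexander module is controlled entirely by $t_v$, and the edge tori contribute only units to the relevant elementary ideals, so multiplicativity of the order ideal along the resulting long exact sequence delivers the product formula for $\Delta_M$ directly.
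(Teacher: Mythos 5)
The paper offers no proof of this statement: it is quoted from \cite{CS08}, where part~(1) is in essence an instance of the Eisenbud--Neumann product formula for the Alexander polynomial of a graph manifold, applied to the plumbing description of $M(\A)$, and part~(2) is then deduced from the McMullen--Turaev relation $E_1(M)=I^2\cdot(\Delta_M)$. Your architecture --- local contributions $(t_v-1)^{\deg(v)-2}$ from the Seifert pieces glued along $\Gamma$, followed by Proposition~\ref{prop:cv 3d} for part~(2) --- is therefore the right one, and your local computation is correct: for $\Z\times F_{d-1}$ the $d-1$ commutator relators give an Alexander matrix whose maximal minors generate $(t_v-1)^{d-2}$ times an ideal of gcd $1$.

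The gap is in the globalization. Even granting that row and column operations bring $\partial^{\ab}_2$ to block-diagonal form (itself nontrivial: the flip gluings identify each meridian column of one block with the fiber column of a neighboring block, and when $b_1(\Gamma)>0$ there are stable letters to track), your concluding step ``taking the gcd across vertices yields part~(1)'' is a non sequitur: the gcd of the codimension-one minors of a block matrix is not the product of the per-block gcds, and it is precisely the \emph{product} structure of $\Delta_M$ that needs explaining. The mechanism that actually produces a product over vertices is multiplicativity of order ideals (equivalently, of torsion) along the Mayer--Vietoris sequence of the JSJ decomposition lifted to $M^H$ --- the ``cleaner alternative'' you name but do not carry out; there one must still check that the lifted edge tori contribute units (true, since $t_v$ and $t_w$ are independent in $H$ for every edge $\{v,w\}$, so $\ord^0\bigl(\Z[H]/(t_v-1,t_w-1)\bigr)\doteq 1$) and that $H_1(M^H,\Z)$ is a torsion $\Z[H]$-module. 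A further small point in part~(2): ``$\A$ is not a pencil'' does not force a vertex of degree $\ge 3$ --- three lines in general position give a triangle with all degrees equal to $2$, so $\Delta_M\doteq 1$ and $\VV^1_1(M)=\{\one\}$ while the stated union is empty --- so that boundary case must be excluded or handled separately.
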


Putting now together Corollary \ref{cor:tcone} with the above two 
theorems easily implies the next result. 

\begin{corollary}[\cite{CS08}]
\label{cor:bdry formal}
For the boundary manifold $M$ of a line arrangement $\A$ 
the following conditions are equivalent:
\begin{enumerate}
\item $M$ is formal.
\item $M$ is $1$-formal.
\item $\TC_{\one}(\VV^1_1(M))= \RR^1_1(M)$.
\item $\A$ is either a pencil or a near-pencil.
\end{enumerate}
\end{corollary}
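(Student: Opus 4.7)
The plan is to close the cycle $(1)\imp(2)\imp(3)\imp(4)\imp(1)$. The first implication is tautological, and $(2)\imp(3)$ follows immediately from Corollary~\ref{cor:tcone} applied in degree $i=1$ and depth $k=1$, since the boundary manifold $M$ is a compact smooth manifold and in particular a $1$-finite space.

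The heart of the argument is $(3)\imp(4)$, which I would prove by contraposition. Assume $\A$ is neither a pencil nor a near-pencil; in particular $n\ge 2$. Since $\A$ is not a near-pencil, Theorem~\ref{thm:res arr} yields $\RR^1_1(M)=H^1(M,\C)$. Since $\A$ is not a pencil, Theorem~\ref{thm:alex poly arr}\,(2) identifies $\VV^1_1(M)\cap \Char(M)^{0}$ with the union $\bigcup_{v\,:\,\deg(v)\ge 3}\set{t_v=1}$ of codimension-one subtori in the positive-dimensional torus $\Char(M)^{0}$. The tangent cone at $\one$ of each such subtorus is the rational hyperplane $\set{x_v=0}$, so $\TC_{\one}(\VV^1_1(M))$ is a finite union of proper linear subspaces of $H^1(M,\C)$. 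Since finitely many proper linear subspaces cannot exhaust a positive-dimensional vector space, this tangent cone is strictly contained in $H^1(M,\C)=\RR^1_1(M)$, contradicting $(3)$.

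The implication $(4)\imp(1)$ is handled by exhibiting an explicit diffeomorphism in each case. If $\A$ is a pencil of $n+1$ lines, then $M\cong\connsum^{n}S^1\times S^2$ is a connected sum of formal manifolds, hence formal. If $\A$ is a near-pencil, then $M\cong S^1\times\Sigma_{n-1}$ is a product of formal spaces, hence also formal.

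The only subtle step is the tangent cone comparison inside $(3)\imp(4)$: one must confirm that the union $\bigcup_{\deg(v)\ge 3}\set{x_v=0}$ cannot fill $H^1(M,\C)$. This is straightforward once one notes that $\dim H^1(M,\C)=n+b_1(\Gamma)>0$ and each slice is cut out by a single non-trivial linear form, so a finite union of proper hyperplanes remains proper. No other step presents real difficulty, as each is a direct appeal to the results from Sections~\ref{sect:res man}--\ref{sect:graphman}.
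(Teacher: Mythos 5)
Your proof is correct and follows essentially the same route as the paper, which simply combines Corollary~\ref{cor:tcone} with Theorems~\ref{thm:res arr} and~\ref{thm:alex poly arr}: formality gives the tangent cone formula, while for $\A$ neither a pencil nor a near-pencil the tangent cone to $\VV^1_1(M)$ is a finite union of rational hyperplanes that cannot fill $\RR^1_1(M)=H^1(M,\C)$, and the two exceptional cases yield the explicitly formal manifolds $\connsum^{n}S^1\times S^2$ and $S^1\times\Sigma_{n-1}$. Your write-up just makes explicit the details the paper leaves to the reader.
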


\begin{example}
\label{ex:bdry gen}
Let $\A$ be an arrangement of $n+1\ge 4$ lines in general 
position in $\CP^2$.  Then  $\mu_M=\sum_{1\le i<j\le n} e_i e_j f_{i,j}$ 
and $\RR^1_1(M)=H^1(M,\C)$ properly contains the tangent cone at $\one$ to 
$\VV^1_1(M)=\{\Delta_M=0\}$, 
where $\Delta_M=[(t_1-1)\cdots (t_n-1)(t_1\cdots t_n -1)]^{n-2}$.  

Note that $b_1(M)=\binom{n+1}{2}$, which is an odd integer if $n \equiv 1$ or $2$ mod $4$. 
In this case, the fact that $\TC_{\one}(\VV^1_1(M))\subsetneqq \RR^1_1(M)$ together with 
Theorem \ref{thm:tc 3d}\eqref{odd} imply that $\mu_M$ is not generic.  
\hfill $\Diamond$
\end{example}

\section{Links in the $3$-sphere}
\label{sect:links}

Finally, we analyze the Tangent Cone theorem in the setting 
of knots and links, where the Alexander polynomial originated 
from.  

\subsection{Cohomology ring and resonance varieties}
\label{subsec:links coho}

A link in $S^3$ is a finite collection,  
$L=\{L_1,\dots ,L_n\}$, of disjoint, smoothly embedded  
circles in the $3$-sphere. Let $M=S^3 \setminus \bigcup_{i=1}^n N(L_i)$ 
be the link exterior, i.e., the complement of an open tubular neighborhood 
of $L$. Then $M$ is a compact, connected, orientable 
$3$-manifold, with boundary $\partial M$ consisting of $n$ disjoint tori.  
Furthermore, $M$ is homotopy equivalent to the link complement, 
$X=S^3 \setminus \bigcup_{i=1}^n L_i$.

Picking orientations on the link components yields a preferred basis for 
$H_1(X,\Z)=\Z^n$ consisting of oriented meridians;  let $\{e_1,\dots,e_n\}$ 
be the Kronecker dual basis for $H^1(X,\Z)$.  For each $i\ne j$, 
choose arcs in $X$ connecting $L_i$ to $L_{j}$, and let 
$b_{i,j}\in H^2(X,\Z)$ be their Poincar\'{e}--Lefschetz duals.
Furthermore,  let $\ell_{i,j}=\lk(L_i,L_j)$ be the linking number 
of those two components  (as is well-known, $\ell_{i,j}=\ell_{j,i}$).  
The cohomology ring $H^{\hdot}(X,\Z)$, then, is the 
quotient of the exterior algebra on generators $e_i$ 
and $b_{i,j}$, truncated in degrees $3$ and higher, 
modulo the ideal generated by the relations 
\begin{equation}
\label{eq:coho link}
e_i e_j = \ell_{i,j} b_{i,j} \text{ and }  
b_{i,j} + b_{j,k} + b_{k,i} = 0. 
\end{equation}
In particular, we may choose $\{b_{1,n},\dots ,b_{n-1,n}\}$  as a basis for  
$H^{2}(X,\Z)=\Z^{n-1}$. 

Set $A=H^{\hdot}(X,\C)$ and $S=\C[x_1,\dots, x_n]$, and consider  
the chain complex from \eqref{eq:univ aomoto}.
The $S$-linear map $\delta^1\colon A^1\otimes S\to A^2\otimes S$ 
is given by $\delta^1(e_i)=-\sum_{j=1}^n \ell_{i,j} b_{i,j} \otimes x_j$.  
Rewriting in the chosen basis for $A^2$, we find that the 
transpose matrix, $\partial_2\colon S^{n-1}\to S^n$, has entries
\begin{equation}
\label{eq:del1 link}
(\partial_2)_{i,j}=\ell_{i,j}x_i-\delta_{i,j}\left(\sum_{k=1}^{n}\ell_{i,k}x_k\right).
\end{equation}

The degree $1$ resonance varieties of the link complement, then, 
are the vanishing loci of the codimension $k$ minors of this matrix: 
$\RR^1_k(X)=V(E_k(\partial_2))\subseteq \C^n$. 

\begin{example}
\label{ex:lk1}
If all the linking numbers are equal to $\pm 1$, then 
the cohomology ring is the exterior algebra on $e_1,\dots, e_n$ 
modulo the relations 
$\ell_{i,j}e_ie_j + \ell_{j,k}e_je_k + \ell_{k,i}e_ke_i =0$. 
In the special case when all $\ell_{i,j}$ are equal to $1$, we  
conclude that $\RR^1_1(X)=\{\zero\}$ if $n=2$ and 
$\RR^1_1(X)=\{\sum_{i=1}^{n} x_i = 0\}$ if $n>2$.
On the other hand, if some $\ell_{i,j} = -1$, then the variety $\RR^1_1(X)$   
can be quite complicated, as shown in several examples from \cite[\S6]{MS00}.
\hfill $\Diamond$
\end{example}

\subsection{Characteristic varieties}
\label{subsec:links alex}

Let $\pi=\pi_1(X)$ be the fundamental group of a link complement.  
Using the preferred meridian basis for 
$H_1(X,\Z)=\Z^n$, we may identify the group ring $\Z[\Z^n]$ with 
the ring of Laurent polynomials $\Z[t_1^{\pm 1},\dots , t_n^{\pm 1}]$, 
and view the Alexander polynomial of the link, $\Delta_L=\Delta_{X}$, as 
an element in this ring.  Likewise, we may also identify 
the character group $\Char(X)$ with the algebraic torus $(\C^*)^n$. 
The depth $1$ characteristic variety of the complement, 
$\VV^1_1(X)=\ZZ^1_1(X)$, is a subvariety of $(\C^*)^n$ 
determined by the Alexander polynomial, as follows. 

First suppose that $L$ is a knot, that is, a $1$-component link.  
Then the polynomial $\Delta_{L}\in \Z[t^{\pm 1}]$ 
satisfies $\Delta_{L}(1)=\pm 1$ and 
$\Delta_{L}(t^{-1})\doteq \Delta_{L}(t)$.  In fact, 
every Laurent polynomial satisfying these two conditions 
occurs as the Alexander polynomial of a knot. 
By definition, the Alexander variety $\wV^1_1(X)\subset \C^*$ 
is the set of roots of $\Delta_{L}$; in particular, $1\notin \wV^1_1(X)$.  
On the other hand, $\VV^1_1(X)$ consists 
of all those roots, together with $1$.

Now suppose that the link $L$ has at least two components. Work of Eisenbud 
and Neumann \cite{EN} shows that the first Alexander ideal, $E_1(A_X)$, is 
equal to $I \cdot (\Delta_{L})$, where $I$ is the augmentation ideal of $\Z[\Z^n]$.
Hence, by Proposition \ref{prop:zz1-pi}, 
\begin{equation}
\label{eq:cv link}
\VV^1_1(X)= 
\{z\in (\C^{*})^n \mid \Delta_L(z)=0\}\cup \{\one\}.
\end{equation}

As before, set 
$\widetilde\Delta_L(z_1,\dots,z_n)=\Delta_L(z_1+1,\dots,z_n+1)$. 
The tangent cone to the characteristic variety is then given by the 
following formula:
\begin{equation}
\label{eq:tcone link}
\TC_{\one}(\VV^1_1(X))=
\begin{cases}
V(\init(\widetilde\Delta_L)) & \text{if $\Delta_L(\one)=0$}, \\[2pt]
\{\zero\} & \text{otherwise}.
\end{cases}
\end{equation}

\subsection{Formality}
\label{subsec:formality}

The link complement $X$ has the homotopy type of a $2$-complex;  
thus, $X$ is formal if and only if it is $1$-formal. For a geometrically 
defined class of links (which includes the Hopf links of arbitrarily 
many components) formality holds. 

\begin{example}
\label{ex:algebraic}
Suppose $L$ is an {\em algebraic}\/ link, that is, $\bigcup_{i=1}^n L_i$ 
is the intersection of a complex plane algebraic curve having an isolated 
singularity at a point $p$ with a small $3$-sphere centered at $p$. 
Then, as shown in \cite[Theorem 4.2]{DuH}, the complement $X$ 
is a formal space. 
\end{example}

In general, though, link complements are far from being formal, and, 
in fact, may even fail to admit a $1$-finite $1$-model. 
Their non-formality has been traditionally detected by higher-order 
Massey products. As we shall see below, the resonance varieties 
and the Tangent Cone theorem provide an efficient, alternative way to 
ascertain the non-formality or the non-existence of finite models 
for link complements. 

\subsection{Two-component links}
\label{subsec:links 2comp}

To start with, let $L=\{L_1,L_2\}$ be a $2$-component link, 
and set $\ell=\lk(L_1,L_2)$.  The Alexander polynomial 
$\Delta_L(t_1,t_2)$, then, satisfies the following formula 
due to Torres \cite{Tor}:
\begin{equation}
\label{eq:torres}
\Delta_L(t,1) = \frac{t^{\ell}-1}{t-1}  \Delta_{L_1}(t), 
\end{equation}
and analogously for $\Delta_L(1,t)$. 
Using now the fact that the Alexander polynomial of a knot evaluates 
to $\pm1$ at $1$, we see that 
\begin{equation}
\label{eq:torres1}
\Delta_L(1,1) = \pm \ell.
\end{equation}

\begin{theorem}
\label{thm:2link}
Let $L=\{L_1,L_2\}$ be a $2$-component link, with complement 
$X$.  The following statements are equivalent:
\begin{enumerate}
\item \label{lk1} The space $X$ is formal.
\item \label{lk2} The space $X$ is $1$-formal.
\item \label{lk3} The tangent cone formula 
$\tau_{\one}(\VV^1_1(X))=\TC_{\one}(\VV^1_1(X))=\RR^1_1(X)$ holds.
\item \label{lk4} The linking number $\ell=\lk(L_1,L_2)$ is non-zero.
\end{enumerate}
\end{theorem}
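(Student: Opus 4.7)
The plan is to prove the cycle $(1)\Leftrightarrow(2)$, $(2)\Rightarrow(3)$, $(3)\Rightarrow(4)$, and $(4)\Rightarrow(1)$. The equivalence $(1)\Leftrightarrow(2)$ is a direct consequence of the cited result from \cite{Mc10} that $1$-formality of a $2$-complex implies full formality, since $X$ is homotopy equivalent to such a complex. The implication $(2)\Rightarrow(3)$ is Corollary \ref{cor:tcone} applied in degree $i=1$ and depth $k=1$.

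For the contrapositive of $(3)\Rightarrow(4)$, assume $\ell=0$. The cup-product relation $e_1 e_2 = \ell\, b_{1,2}$ from \eqref{eq:coho link} then collapses to $e_1 e_2 = 0$, making the matrix $\partial_2$ in \eqref{eq:del1 link} vanish identically, so \eqref{eq:r1ka} gives $\RR^1_1(X) = H^1(X,\C) = \C^2$. On the other hand, the Torres formula \eqref{eq:torres} with $\ell=0$ shows $\Delta_L(t,1) = 0$, and symmetrically $\Delta_L(1,t) = 0$; hence either $\Delta_L \equiv 0$ or both $(t_1-1)$ and $(t_2-1)$ divide $\Delta_L$. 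In the non-degenerate case $\Delta_L\ne 0$, the variety $V(\Delta_L)$ is a proper one-dimensional subvariety of $(\C^*)^2$, so $\TC_{\one}(\VV^1_1(X))$ has dimension at most one and is strictly contained in $\RR^1_1(X)=\C^2$, violating the tangent cone equality.

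For $(4)\Rightarrow(1)$, suppose $\ell\ne 0$. Two computations align: the entries of $\partial_2$ give $I_1(\partial_2) = (\ell x_1, \ell x_2) = (x_1, x_2)$, so $\RR^1_1(X) = \set{\zero}$; and Torres's identity \eqref{eq:torres1} gives $\Delta_L(\one) = \pm\ell \ne 0$, so $\one$ is an isolated point of $\VV^1_1(X) = V(\Delta_L)\cup\set{\one}$, whence $\TC_{\one}(\VV^1_1(X)) = \set{\zero}$ as well. To upgrade this coincidence to genuine formality of $X$, we verify that $A = H^{\hdot}(X,\Q)$ with zero differential is a $\cdga$ model: when $\ell\ne 0$, the Malcev Lie algebra $\m(\pi_1(X))$ coincides with the completion of the quadratic Lie algebra dual to the cohomology ring $H^{\hdot}(X,\Q)$, a fact stemming from the Milnor presentation of the link group and the vanishing of all higher $\bar\mu$-invariants for $2$-component links when the linking number is nonzero.

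The main obstacle is the final implication $(4)\Rightarrow(1)$: although $\ell\ne 0$ makes the tangent cone equalities hold automatically at the level of varieties, promoting this into a structural statement about the rational homotopy type of $X$ requires either an explicit Milnor-type presentation of $\pi_1(X)$ or a careful invocation of Massey-product vanishing beyond first order.
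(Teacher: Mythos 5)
Your overall architecture --- the cycle $(1)\Leftrightarrow(2)$, $(2)\Rightarrow(3)$, $(3)\Rightarrow(4)$, $(4)\Rightarrow(1)$ --- matches the paper's, and the first three steps are essentially the paper's own argument: $(1)\Leftrightarrow(2)$ via the $2$-complex observation from \cite{Mc10}, $(2)\Rightarrow(3)$ via Corollary \ref{cor:tcone}, and $(3)\Rightarrow(4)$ by contrasting $\RR^1_1(X)=\C^2$ with the positive-codimension tangent cone of $V(\Delta_L)$. The genuine gap is in your $(4)\Rightarrow(1)$. The paper does not go anywhere near Milnor invariants here: it simply observes that when $\ell\neq 0$ the relation $e_1e_2=\ell\, b_{1,2}$ makes the cup product $H^1\otimes H^1\to H^2$ surjective over $\Q$, so $H^{\hdot}(X,\Q)\cong H^{\hdot}(T^2,\Q)$, and a space with the rational cohomology ring of a torus is formal (exterior algebras on degree-one generators are intrinsically formal; this is the ``rational cohomology tori'' item in \S\ref{subsec:formal}). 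Your substitute claim --- that all higher $\bar\mu$-invariants of a $2$-component link vanish when $\ell\neq 0$ --- is false: these invariants are only defined modulo divisors of $\ell$ and can be nonzero when $\abs{\ell}\ge 2$ (e.g., the $(2,4)$-torus link has $\ell=2$ and $\bar\mu(122)\equiv 1 \bmod 2$), yet such links are formal by the paper's argument. Moreover, even granting the vanishing, you give no argument that it forces $\m(\pi_1(X))$ to be the completion of the quadratic Lie algebra dual to the cohomology ring. As you concede, this step is an ``obstacle''; it is in fact a gap, and the tangent-cone computation you do carry out for $\ell\neq 0$ only re-proves $(4)\Rightarrow(3)$, which does not close the cycle.

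One further remark on $(3)\Rightarrow(4)$: you correctly isolate the dichotomy ``$\Delta_L\equiv 0$ or $(t_1-1)(t_2-1)$ divides $\Delta_L$'' but then dispose only of the case $\Delta_L\neq 0$. The paper's proof silently makes the same assumption (it asserts outright that $V(\Delta_L)$ is a curve), so you are no worse off than the source; but note that when $\Delta_L\equiv 0$ (e.g., for a split or boundary link, where necessarily $\ell=0$) one has $\VV^1_1(X)=(\C^*)^2$ and hence $\tau_{\one}=\TC_{\one}=\RR^1_1(X)=\C^2$, so the dimension count cannot rule this case out and it genuinely requires separate treatment.
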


\begin{proof}

As explained previously, implications \eqref{lk1} $\Rightarrow$ \eqref{lk2}  
$\Rightarrow$ \eqref{lk3} hold for arbitrary finite, connected CW-complexes.  

To prove \eqref{lk3} $\Rightarrow$ \eqref{lk4}, 
suppose that $\ell=0$.  Then of course $\RR^1_1(X)=\C^2$. 
On the other hand, the variety $V(\Delta_L)$ 
is an algebraic curve in $(\C^*)^2$.  By \eqref{eq:torres1},
this curve passes through $\one$, and thus, by \eqref{eq:cv link}, 
it coincides with $\VV^1_1(X)$. 
By \eqref{eq:tcone link}, the tangent cone to this variety is the 
algebraic curve in 
$\C^2$ defined by the ideal $\init(\widetilde\Delta_L)$; 
in particular, $\TC_{\one}(\VV^1_1(X))$ is properly contained 
in $\RR^1_1(X)$. 

Finally, to prove \eqref{lk4} $\Rightarrow$ \eqref{lk1}, 
suppose that $\ell\ne 0$.  Then $H^{\hdot}(X,\Q)\cong H^*(T^2,\Q)$, 
and so $X$ is formal. 
\end{proof}

We  illustrate this theorem and related phenomena with several 
examples.  The links which appear in these examples are numbered 
$c^n_r$, where $c$ is the crossing number, $n$ is the number 
of components, and $r$ is the index from Rolfsen's tables \cite{Ro}. 

\begin{example}
\label{ex:link 421}
Let $L$ be the $4^2_1$ link. This is a $2$-component link with 
linking number $2$ and Alexander polynomial $\Delta_L=t_1+t_2$.   
By Theorem \ref{thm:2link}, the complement $X$ is formal, and 
the tangent cone formula \eqref{eq:tc} holds.  
Nevertheless, the variety $\VV^1_1(X)$ has an irreducible 
component (the translated subtorus $t_1t_2^{-1}=-1$), 
which is not detected by the resonance variety $\RR^1_1(X)=\{\zero\}$.
\hfill $\Diamond$
\end{example}

\begin{example}
\label{ex:623-link}
Let $L$ be the $6^2_3$ link.  This is a $2$-component link with 
linking number $2$ and Alexander polynomial $\Delta_L=t_1t_2 -2(t_1+t_2)+1$.   
Again, the complement $X$ is formal, yet $\VV^1_1(X)$ has an irreducible 
component (not passing through $\one$) which this time is 
not a translated algebraic subtorus of $(\C^*)^2$.
\hfill $\Diamond$
\end{example}

\begin{example}
\label{ex:whitehead}
Let $L$ be the $5^2_1$ link, also known as the Whitehead link.  
This $2$-com\-ponent link has linking number $0$; thus, $\RR^1_1(X)=\C^2$ 
and, by Theorem \ref{thm:2link}, $X$ is not formal.  
On the other hand, $\Delta_L=(t_1-1)(t_2-1)$, and so 
$\VV^1_1(X)\subset (\C^*)^2$ consists of the two coordinate subtori, 
$\{t_1=1\}$ and $\{t_2=1\}$.  Consequently 
$\tau_{\one}(\VV^1_1(X))=\TC_{\one}(\VV^1_1(X))=\{x_1=0\}\cup \{x_2=0\}$,    
and so this computation leaves open the question whether $X$ admits a $1$-finite 
$1$-model.
\hfill $\Diamond$
\end{example}

\subsection{Links of many components}
\label{subsec:links ncomp}

We conclude this section with a discussion of links having $3$  
or more components. In the first example, $\RR^1_1(X)$ 
is linear, yet it strictly contains $\tau_{\one}(\VV^1_1(X))$. 

\begin{example}
\label{ex:631}
Let $L$ be the $6^3_1$ link. Then 
$\VV^1_1(X)$ is the subvariety of  $(\C^*)^3$ defined by the polynomial  
$t_1t_2+t_1t_3+t_2t_3-t_1-t_2-t_3$. 
A quick computation shows that $\tau_1(\VV^1_1(X))$ 
is a union of three lines in $\C^3$, namely, 
$\{x_1=x_2+x_3=0\}$,  $\{x_2=x_1+x_3=0\}$, and 
$\{x_3=x_1+x_2=0\}$. 
On the other hand, 
$\TC_{\one}(\VV^1_1(X))=\RR^1_1(X)$ is the plane defined 
by the equation $x_1+x_2+x_3=0$. By Theorem \ref{thm:tcone-fm},  
$X$ admits no $1$-finite $1$-model.
\hfill $\Diamond$
\end{example}

In the next example, the resonance variety $\RR^1_1(X)$ 
is non-linear. 

\begin{example}
\label{ex:842}
Let $L$ be the $8^4_2$ link. Then 
$\VV^1_1(X)\subset (\C^*)^4$ is the zero  locus of the polynomial 
$t_1 t_2 t_3 t_4-t_1 t_2 t_4-t_1 t_3 t_4+t_1 t_3+t_2 t_4-t_2-t_3+1$. 
It follows that $\tau_{\one}(\VV^1_1(X))$ is a union of eight
planes in $\C^4$, 
\begin{align*}
&\{x_1=x_2=0\}\cup  
\{x_3=x_4=0\}\cup 
\{x_1=x_3+x_4=0\}\cup \\
&\: \{x_1+x_2=x_4=0\}\cup
\{x_1-x_2=x_3=0\}\cup 
\{x_2=x_3-x_4=0\}\cup\\
&\:\: \{x_1-x_2+2x_3=x_2-x_3+x_4=0\}\cup
\{x_1-x_2+x_3=2x_2-x_3+x_4=0\}.
\end{align*}
On the other hand, $\TC_{\one}(\VV^1_1(X))=\RR^1_1(X)$ 
is an irreducible quadric, given by the equation 
$(x_1+x_2)x_3=(x_1-x_2)x_4$. 
Hence, once again, $X$ admits no $1$-finite $1$-model. 
\hfill $\Diamond$
\end{example}

An interesting class of examples, studied in detail in \cite{MS00, MS02},  
consists of the singularity links of arrangements of transverse planes in $\R^4$.  
By intersecting such an arrangement $\A=\{H_1,\dots, H_n\}$ with a 
$3$-sphere about $\zero$, we obtain a link $L$ of $n$ great circles in $S^3$. 
The link complement $X$ is aspherical, and its fundamental group 
is a semidirect product of free groups,  $\pi=F_{n-1}\rtimes \Z$. 
If $\A$ is defined by complex equations, then $L$ is the Hopf link, 
and $X$ is formal; in general, though, things are much more complicated.

\begin{example} 
\label{ex:31425} 
Consider the arrangement $\A=\A(31425)$ defined in complex coordinates 
by the function $Q(z,w)=z(z-w)(z-2w)(2z+3w-5\overline{w}) (2z-w-5\overline{w})$. 
The cohomology jump loci of the corresponding link complement $X$ were 
computed in \cite[Example 6.5]{MS00} and \cite[Example 10.2]{MS02}.  
As noted in \cite[Example 8.2]{DPS-duke}, we have that 
$\TC_{\one}(\VV^1_2(X))\subsetneqq \RR^1_2(X)$, and so $X$ is 
not $1$-formal. In fact, $\TC_{\one}(\VV^1_1(X))=\RR^1_1(X)$, yet 
$\tau_{\one}(\VV^1_1(X))\subsetneqq \TC_{\one}(\VV^1_1(X))$, 
and so $X$ admits no $1$-finite $1$-model.
\end{example}

These examples (and many others) lead to the 
following problem regarding the range of applicability of the tangent 
cone formula in the setting of classical links.

\begin{problem}
\label{prb:links}
Given a link complement $X$, determine which (if any) of 
the following equalities is true.
\begin{enumerate}
\item \label{tclk1} $\tau_{\one}(\VV^1_1(X))= \TC_{\one}(\VV^1_1(X))$.
\item \label{tclk2}  $ \TC_{\one}(\VV^1_1(X)) = \RR^1_1(X)$.  
\end{enumerate} 
Does the complement need to admit a $1$-finite $1$-model for the first equality to 
hold?  Does it need to be formal for both equalities to hold?
\end{problem}

\newcommand{\arxiv}[1]
{\texttt{\href{http://arxiv.org/abs/#1}{arXiv:#1}}}
\newcommand{\arxi}[1]
{\texttt{\href{http://arxiv.org/abs/#1}{arxiv:}}
\texttt{\href{http://arxiv.org/abs/#1}{#1}}}
\newcommand{\arxx}[2]
{\texttt{\href{http://arxiv.org/abs/#1.#2}{arxiv:#1.}}
\texttt{\href{http://arxiv.org/abs/#1.#2}{#2}}}
\newcommand{\doi}[1]
{\texttt{\href{http://dx.doi.org/#1}{doi:\nolinkurl{#1}}}}
\renewcommand{\MR}[1]
{\href{http://www.ams.org/mathscinet-getitem?mr=#1}{MR#1}}
\newcommand{\MRh}[2]
{\href{http://www.ams.org/mathscinet-getitem?mr=#1}{MR#1 (#2)}}
\newcommand{\Zbl}[1]
{\href{http://zbmath.org/?q=an:#1}{Zbl #1}}

\end{document}